
\documentclass[12pt,reqno]{amsart} 
\usepackage{amssymb,amscd,url,amsmath}
\usepackage{xcolor}    
\usepackage[all]{xy}  
\usepackage{bbm}      
\usepackage{pict2e}
\usepackage{calligra}  
\usepackage{constants} 

\usepackage{mathtools}

\newconstantfamily{DZ}{
  symbol = {\mathcal{C}},
  format = \arabic
  }



\begin{document}

\allowdisplaybreaks


\title[Lehmer Bound for Abelian Surfaces]
      {A Lehmer-Type Height Lower Bound for Abelian Surfaces over Function Fields}
\date{\today}
\author[N.R. Looper]{Nicole R. Looper}
\email{nicole\_looper@brown.edu}
\address{Department of Mathematics, Box 1917
  Brown University, Providence, RI 02912 USA}
\author[J.H. Silverman]{Joseph H. Silverman}
\email{joseph\_silverman@brown.edu}
\address{Department of Mathematics, Box 1917
  Brown University, Providence, RI 02912 USA.
  ORCID: https://orcid.org/0000-0003-3887-3248}

\subjclass[2010]{Primary: 11G10, 11G50, 14K15; Secondary: 14K25, 37P30}
\keywords{abelian variety, canonical height, Lehmer conjecture}
\thanks{The first author was supported by NSF grant DMS-1803021. The
  second author was partially supported by Simons Collaboration Grant
  $\# 712332$.}



\newcommand{\JOE}[1]{{\color{blue} $\bigstar$ \textsf{{\bf Joe:} [#1]}}}
\newcommand{\NICOLE}[1]{{\color{red} $\bigstar$ \textsf{{\bf Nicole:} [#1]}}}

\hyphenation{ca-non-i-cal semi-abel-ian abel-ian}


\newtheorem{theorem}{Theorem}[section]
\newtheorem{lemma}[theorem]{Lemma}
\newtheorem{sublemma}[theorem]{Sublemma}
\newtheorem{conjecture}[theorem]{Conjecture}
\newtheorem{proposition}[theorem]{Proposition}
\newtheorem{corollary}[theorem]{Corollary}

\theoremstyle{definition}
\newtheorem*{claim}{Claim}
\newtheorem{definition}[theorem]{Definition}
\newtheorem*{intuition}{Intuition}
\newtheorem{example}[theorem]{Example}
\newtheorem{remark}[theorem]{Remark}
\newtheorem{question}[theorem]{Question}

\theoremstyle{remark}
\newtheorem*{acknowledgement}{Acknowledgements}


\newenvironment{notation}[0]{%
  \begin{list}%
    {}%
    {\setlength{\itemindent}{0pt}
     \setlength{\labelwidth}{4\parindent}
     \setlength{\labelsep}{\parindent}
     \setlength{\leftmargin}{5\parindent}
     \setlength{\itemsep}{0pt}
     }%
   }%
  {\end{list}}

\newenvironment{parts}[0]{%
  \begin{list}{}%
    {\setlength{\itemindent}{0pt}
     \setlength{\labelwidth}{1.5\parindent}
     \setlength{\labelsep}{.5\parindent}
     \setlength{\leftmargin}{2\parindent}
     \setlength{\itemsep}{0pt}
     }%
   }%
  {\end{list}}
\newcommand{\Part}[1]{\item[\upshape#1]}

\def\Case#1#2{%
\paragraph{\textbf{\boldmath Case #1: #2.}}\hfil\break\ignorespaces}

\newcommand{\Fhat}{\hat{F}}
\newcommand{\fhat}{\hat{f}}
\newcommand\FB[1]{\par\noindent\framebox{$\boldsymbol{#1}$}\enspace\ignorespaces}

\renewcommand{\a}{\alpha}
\newcommand{\bfalpha}{{\boldsymbol{\alpha}}}
\renewcommand{\b}{\beta}
\newcommand{\bfbeta}{{\boldsymbol{\beta}}}
\newcommand{\g}{\gamma}
\renewcommand{\d}{\delta}
\newcommand{\e}{\epsilon}
\newcommand{\f}{\varphi}
\newcommand{\bfphi}{{\boldsymbol{\f}}}
\renewcommand{\l}{\lambda}
\renewcommand{\k}{\kappa}
\newcommand{\lhat}{\hat\lambda}
\newcommand{\m}{\mu}
\newcommand{\bfmu}{{\boldsymbol{\mu}}}
\renewcommand{\o}{\omega}
\newcommand{\bfpi}{{\boldsymbol{\pi}}}
\renewcommand{\r}{\rho}
\newcommand{\bfrho}{{\boldsymbol{\rho}}}
\newcommand{\rbar}{{\bar\rho}}
\newcommand{\s}{\sigma}
\newcommand{\sbar}{{\bar\sigma}}
\renewcommand{\t}{\tau}
\newcommand{\z}{\zeta}

\newcommand{\D}{\Delta}
\newcommand{\G}{\Gamma}
\newcommand{\F}{\Phi}
\renewcommand{\L}{\Lambda}


\newcommand{\BarIt}[1]{{{\boldsymbol{\bar{#1}}}}}

\newcommand{\Kc}{\BarIt{\mathsf{K}}}
\newcommand{\Gc}{\BarIt\Gamma}
\newcommand{\Jc}{\BarIt{J}}
\newcommand{\Pc}{\BarIt{P}}
\newcommand{\Qc}{\BarIt{Q}}
\newcommand{\fc}{\BarIt{f}}
\newcommand{\Oc}{\BarIt{\Ocal}}

\newcommand{\ga}{{\mathfrak{a}}}
\newcommand{\gA}{{\mathfrak{A}}}
\newcommand{\gb}{{\mathfrak{b}}}
\newcommand{\gB}{{\mathfrak{B}}}
\newcommand{\gc}{{\mathfrak{c}}}
\newcommand{\gd}{{\mathfrak{d}}}
\newcommand{\gD}{{\mathfrak{D}}}
\newcommand{\gM}{{\mathfrak{M}}}
\newcommand{\gN}{{\mathfrak{N}}}
\newcommand{\gn}{{\mathfrak{n}}}
\newcommand{\gp}{{\mathfrak{p}}}
\newcommand{\gP}{{\mathfrak{P}}}
\newcommand{\gS}{{\mathfrak{S}}}
\newcommand{\gq}{{\mathfrak{q}}}

\newcommand{\Abar}{{\bar A}}
\newcommand{\Ebar}{{\bar E}}
\newcommand{\kbar}{{\bar k}}
\newcommand{\KCbar}{\overline{K}_C}
\newcommand{\Kbar}{{\bar K}}
\newcommand{\Pbar}{{\bar P}}
\newcommand{\Sbar}{{\bar S}}
\newcommand{\Tbar}{{\bar T}}
\newcommand{\gbar}{{\bar\gamma}}
\newcommand{\lbar}{{\bar\lambda}}
\newcommand{\ybar}{{\bar y}}
\newcommand{\phibar}{{\bar\f}}
\newcommand{\nubar}{{\overline\nu}}

\newcommand{\Acal}{{\mathcal A}}
\newcommand{\Bcal}{{\mathcal B}}
\newcommand{\Ccal}{{\mathcal C}}
\newcommand{\Dcal}{{\mathcal D}}
\newcommand{\Ecal}{{\mathcal E}}
\newcommand{\Fcal}{{\mathcal F}}
\newcommand{\Gcal}{{\mathcal G}}
\newcommand{\Hcal}{{\mathcal H}}
\newcommand{\Ical}{{\mathcal I}}
\newcommand{\Jcal}{{\mathcal J}}
\newcommand{\Kcal}{{\mathcal K}}
\newcommand{\Lcal}{{\mathcal L}}
\newcommand{\Mcal}{{\mathcal M}}
\newcommand{\Ncal}{{\mathcal N}}
\newcommand{\Ocal}{{\mathcal O}}
\newcommand{\Pcal}{{\mathcal P}}
\newcommand{\Qcal}{{\mathcal Q}}
\newcommand{\Rcal}{{\mathcal R}}
\newcommand{\Scal}{{\mathcal S}}
\newcommand{\Tcal}{{\mathcal T}}
\newcommand{\Ucal}{{\mathcal U}}
\newcommand{\Vcal}{{\mathcal V}}
\newcommand{\Wcal}{{\mathcal W}}
\newcommand{\Xcal}{{\mathcal X}}
\newcommand{\Ycal}{{\mathcal Y}}
\newcommand{\Zcal}{{\mathcal Z}}

\renewcommand{\AA}{\mathbb{A}}
\newcommand{\BB}{\mathbb{B}}
\newcommand{\CC}{\mathbb{C}}
\newcommand{\FF}{\mathbb{F}}
\newcommand{\GG}{\mathbb{G}}
\newcommand{\NN}{\mathbb{N}}
\newcommand{\PP}{\mathbb{P}}
\newcommand{\QQ}{\mathbb{Q}}
\newcommand{\RR}{\mathbb{R}}
\newcommand{\TT}{\mathbb{T}}
\newcommand{\ZZ}{\mathbb{Z}}

\newcommand{\bfa}{{\boldsymbol a}}
\newcommand{\bfb}{{\boldsymbol b}}
\newcommand{\bfc}{{\boldsymbol c}}
\newcommand{\bfd}{{\boldsymbol d}}
\newcommand{\bfe}{{\boldsymbol e}}
\newcommand{\ee}{{\boldsymbol{e}}} 
\newcommand{\bff}{{\boldsymbol f}}
\newcommand{\bfg}{{\boldsymbol g}}
\newcommand{\bfi}{{\boldsymbol i}}
\newcommand{\bfj}{{\boldsymbol j}}
\newcommand{\bfk}{{\boldsymbol k}}
\newcommand{\bfm}{{\boldsymbol m}}
\newcommand{\bfn}{{\boldsymbol n}}
\newcommand{\bfp}{{\boldsymbol p}}
\newcommand{\bfq}{{\boldsymbol q}}
\newcommand{\bfr}{{\boldsymbol r}}
\newcommand{\bfs}{{\boldsymbol s}}
\newcommand{\bft}{{\boldsymbol t}}
\newcommand{\bfu}{{\boldsymbol u}}
\newcommand{\bfv}{{\boldsymbol v}}
\newcommand{\bfw}{{\boldsymbol w}}
\newcommand{\bfx}{{\boldsymbol x}}
\newcommand{\bfy}{{\boldsymbol y}}
\newcommand{\bfz}{{\boldsymbol z}}
\newcommand{\bfA}{{\boldsymbol A}}
\newcommand{\bfF}{{\boldsymbol F}}
\newcommand{\bfB}{{\boldsymbol B}}
\newcommand{\bfD}{{\boldsymbol D}}
\newcommand{\bfE}{{\boldsymbol E}}
\newcommand{\bfG}{{\boldsymbol G}}
\newcommand{\bfI}{{\boldsymbol I}}
\newcommand{\bfM}{{\boldsymbol M}}
\newcommand{\bfP}{{\boldsymbol P}}
\newcommand{\bfQ}{{\boldsymbol Q}}
\newcommand{\bfS}{{\boldsymbol S}}
\newcommand{\bfT}{{\boldsymbol T}}
\newcommand{\bfU}{{\boldsymbol U}}
\newcommand{\bfX}{{\boldsymbol X}}
\newcommand{\bfY}{{\boldsymbol Y}}
\newcommand{\bfzero}{{\boldsymbol{0}}}
\newcommand{\bfone}{{\boldsymbol{1}}}

\newcommand{\ab}{{\textup{ab}}}
\newcommand{\abcFunctionOne}{\operatorname{\xi}}
\newcommand{\abcFunctionThree}{\operatorname{\Delta}}
\newcommand{\ASD}{\Delta} 
\newcommand{\Aut}{\operatorname{Aut}}
\newcommand{\Avg}{\operatorname{\hbox{\normalfont\calligra{Avg}}}}
\newcommand{\AvgL}{\operatornamewithlimits{\hbox{\normalfont\calligra{Avg}}}}
\newcommand{\Berk}{{\textup{Berk}}}
\newcommand{\adj}{{\textup{adj}}}
\newcommand{\Bern}{{\mathbb{B}}}   
\newcommand{\bad}{{\textup{bad}}}
\newcommand{\Birat}{\operatorname{Birat}}
\newcommand{\characteristic}{\operatorname{char}}
\newcommand{\codim}{\operatorname{codim}}
\newcommand{\Crit}{\operatorname{Crit}}
\newcommand{\crit}{{\textup{crit}}}
\newcommand{\critwt}{\operatorname{critwt}} 
\newcommand{\Cycle}{\operatorname{Cycles}}
\newcommand{\DD}[1]{\mathcal{D}_{(#1)}} 
\newcommand{\diag}{\operatorname{diag}}
\newcommand{\dimEnd}{{M}}  
\newcommand{\Disc}{\operatorname{Disc}}
\newcommand{\Div}{\operatorname{Div}}
\renewcommand{\div}{\operatorname{div}}
\newcommand{\Df}{{Df}}  
\newcommand{\DI}{\int\!\!\!\!\int}   
\newcommand{\Dom}{\operatorname{Dom}}
\newcommand{\dyn}{{\textup{dyn}}}
\newcommand{\End}{\operatorname{End}}
\newcommand{\PortEndPt}{{\textup{endpt}}} 
\newcommand{\END}{\smash[t]{\overline{\operatorname{End}}}\vphantom{E}}
\newcommand{\EndPoint}{E}  
\newcommand{\ExtOrbit}{\mathcal{EO}} 
\newcommand{\Fbar}{{\bar{F}}}
\newcommand{\FC}{{\widehat{L}}}  
\newcommand{\fib}{{\textup{fib}}}
\newcommand{\Fix}{\operatorname{Fix}}
\newcommand{\Fiber}{\operatorname{Fiber}}
\newcommand{\FOD}{\operatorname{FOD}}
\newcommand{\FOM}{\operatorname{FOM}}
\newcommand{\Frame}{\operatorname{Fr}}
\newcommand{\Gal}{\operatorname{Gal}}
\newcommand{\GITQuot}{/\!/}
\newcommand{\GL}{\operatorname{GL}}
\newcommand{\GR}{\operatorname{\mathcal{G\!R}}}
\newcommand{\hhat}{{\hat h}}
\newcommand{\Hom}{\operatorname{Hom}}
\newcommand{\Index}{\operatorname{Index}}
\newcommand{\Image}{\operatorname{Image}}
\newcommand{\Int}{{\mathbb{I}}}   
\newcommand{\Intersect}{{\textup{Intersect}}}
\newcommand{\Isom}{\operatorname{Isom}}
\newcommand{\Jac}{\operatorname{Jac}}
\newcommand{\Ker}{{\operatorname{Ker}}}
\newcommand{\Ksep}{K^{\text{sep}}}  
\newcommand{\LCM}{\operatorname{LCM}}
\newcommand{\Length}{\operatorname{Length}}
\newcommand{\Lift}{\operatorname{Lift}}
\newcommand{\limstar}{\lim\nolimits^*}
\newcommand{\limstarn}{\lim_{\hidewidth n\to\infty\hidewidth}{\!}^*{\,}}
\newcommand{\logplus}{\log^{\scriptscriptstyle+}}
\newcommand{\Mat}{\operatorname{Mat}}
\newcommand{\maxplus}{\operatornamewithlimits{\textup{max}^{\scriptscriptstyle+}}}
\newcommand{\MOD}[1]{~(\textup{mod}~#1)}
\newcommand{\Model}{\operatorname{Model}}
\newcommand{\Mor}{\operatorname{Mor}}
\newcommand{\Moduli}{\mathcal{M}}
\newcommand{\MODULI}{\overline{\mathcal{M}}}
\newcommand{\Mult}{\operatorname{\textup{\textsf{Mult}}}}
\newcommand{\Norm}{{\operatorname{\mathsf{N}}}}
\newcommand{\notdivide}{\nmid}
\newcommand{\normalsubgroup}{\triangleleft}
\newcommand{\NS}{\operatorname{NS}}
\newcommand{\onto}{\twoheadrightarrow}
\newcommand{\ord}{\operatorname{ord}}
\newcommand{\Orbit}{\mathcal{O}}
\newcommand{\Pcase}[3]{\par\noindent\framebox{$\boldsymbol{\Pcal_{#1,#2}}$}\enspace\ignorespaces}
\newcommand{\pd}{p}       
\newcommand{\bfpd}{\bfp}  
\newcommand{\Per}{\operatorname{Per}}
\newcommand{\Perp}{\operatorname{Perp}}
\newcommand{\PrePer}{\operatorname{PrePer}}
\newcommand{\PGL}{\operatorname{PGL}}
\newcommand{\Pic}{\operatorname{Pic}}
\newcommand{\Portrait}{\mathfrak{Port}}  
\newcommand{\prim}{\textup{prim}}
\newcommand{\Prob}{\operatorname{Prob}}
\newcommand{\Proj}{\operatorname{Proj}}
\newcommand{\Qbar}{{\bar{\QQ}}}
\newcommand{\rank}{\operatorname{rank}}
\newcommand{\Rat}{\operatorname{Rat}}
\newcommand{\rat}{{\textup{rat}}}
\newcommand{\Resultant}{\operatorname{Res}}
\newcommand{\Residue}{\operatorname{Residue}} 
\renewcommand{\setminus}{\smallsetminus}
\newcommand{\sgn}{\operatorname{sgn}}
\newcommand{\shafdim}{\operatorname{ShafDim}}
\newcommand{\SL}{\operatorname{SL}}
\newcommand{\Span}{\operatorname{Span}}
\newcommand{\Spec}{\operatorname{Spec}}
\renewcommand{\ss}{{\textup{ss}}}
\newcommand{\stab}{{\textup{stab}}}
\newcommand{\Stab}{\operatorname{Stab}}
\newcommand{\SemiStable}[1]{\textup{(SS$_{#1}$)}}  
\newcommand{\Stable}[1]{\textup{(St$_{#1}$)}}      
\newcommand{\Support}{\operatorname{Supp}}
\newcommand{\Sym}{\operatorname{Sym}}  
\newcommand{\Symp}{\operatorname{Sp}}  
\newcommand{\TableLoopSpacing}{{\vrule height 15pt depth 10pt width 0pt}} 
\newcommand{\ThetaFunction}{\theta}  
\newcommand{\ThetaDivisor}{\Theta} 
\newcommand{\ThetaDivisorT}{\Theta_0} 
\newcommand{\tors}{{\textup{tors}}}
\newcommand{\Trace}{\operatorname{Trace}}
\newcommand{\trianglebin}{\mathbin{\triangle}} 
\newcommand{\tr}{{\textup{tr}}} 
\newcommand{\UHP}{{\mathfrak{h}}}    
\newcommand{\val}{\operatorname{val}} 
\newcommand{\wt}{\operatorname{wt}} 
\newcommand{\<}{\langle}
\renewcommand{\>}{\rangle}

\newcommand{\pmodintext}[1]{~\textup{(mod}~#1\textup{)}}
\newcommand{\ds}{\displaystyle}
\newcommand{\longhookrightarrow}{\lhook\joinrel\longrightarrow}
\newcommand{\longonto}{\relbar\joinrel\twoheadrightarrow}
\newcommand{\SmallMatrix}[1]{%
  \left(\begin{smallmatrix} #1 \end{smallmatrix}\right)}

\newcommand{\COS}{\operatorname{\textbf{Cos}}}
\newcommand{\SIN}{\operatorname{\textbf{Sin}}}


\begin{abstract}
Let $K$ be a 1-dimensional function field over an algebraically closed
field of characteristic $0$, and let $A/K$ be an abelian
surface. Under mild assumptions, we prove a Lehmer-type lower bound
for points in~$A(\Kbar)$. More precisely, we prove that there are constants $C_1,C_2>0$ such that the
normalized Bernoulli-part of the canonical height is bounded below by
\[
\hat{h}_A^{\mathbb{B}}(P) \ge C_1\bigl[K(P):K\bigr]^{-2}
\]
for all points~$P\in{A(\Kbar)}$ whose height
satisfies~$0<\hat{h}_A(P)\le{C_2}$.
\end{abstract}

\maketitle

\tableofcontents



\section{Introduction}
\label{section:introduction}

The classical Lehmer conjecture says that there is an absolute
constant~$C>0$ such that if~$\a\in\Qbar^*$ is a not a root of unity,
then its absolute logarithmic height satisfies
\[
h(\a) \ge \frac{C}{\bigl[\QQ(\a):\QQ\bigr]}.
\]
Various authors have extended this conjecture to elliptic curves and to
higher dimensional abelian varieties. We review these conjectures and
some of the progress made in proving them in
Section~\ref{section:surveyolderresults}.

The main results of the present paper are: (1) an explicit Fourier
expansion of the ``Bernoulli-part'' of the canonical height on abelian
surfaces defined over non-archimedean local fields
(Theorem~\ref{theorem:fourierexpansionofL}); (2) a lower bound for a
torsion-and-difference average of the Bernoulli part of the height on
abelian surfaces defined over function fields
(Theorem~\ref{theorem:hlen23len23}). This is an analogue of the key
lemma in~\cite{hindrysilverman:lehmer}, which dealt with elliptic
curves. We also prove: (3) a Lehmer-type lower bound with exponent~$2$
for the canonical height of non-torsion points on abelian surfaces
defined over function fields that is conditional on the assumption
that the torsion-and-difference average of the ``intersection part''
of the canonical height is at least as large as a certain local-global
constant (Corollary~\ref{corollary:conditionallehmer}).

Before explaining the statements of our results in more detail, we
briefly recall the local decomposition of the canonical height on
abelian varieties.  See Section~\ref{section:overviewlocalhts} for
further details and references.

Let~$k$ be a algebraically closed field of characterstic~$0$,
let~$K/k$ be a $1$-dimensional function field, let~$A/K$ be an abelian
variety, and let~$\ASD\in\Div(A)$ be an ample symmetric divisor
on~$A$. The associated canonical height
\[
\hhat_{A,\ASD} : A(\Kbar) \longrightarrow \RR_{\ge0}
\]
may be decomposed as a sum of normalized local canonical heights
\[
\lhat_{A,\ASD,v} : \bigl(A\setminus|\ASD|\bigr)(\Kbar_v) \longrightarrow \RR,
\]
one for each absolute value on~$\Kbar$, where the normalization condition 
\begin{equation}
  \label{eqn:normcondlhatADv}
  \lim_{N\to\infty} \frac{1}{N^{2g}} \sum_{P\in A[N]\setminus|\ASD|} \lhat_{A,\ASD,v}(P) = 0
\end{equation}
serves to uniquely determine~$\lhat_{A,\ASD,v}$.  The local height
further decomposes into an ``intersection-part'' and a
``Bernoulli-part,'' which we denote respectively
by~$\lhat^\Int_{A,\ASD,v}$ and~$\lhat^\Bern_{A,\ASD,v}$.  The
intersection part is given by
\[
\lhat^\Int_{A,\ASD,v}(P) =
\left(
\parbox{.55\hsize}{
  intersection index of $\overline{P}$ and $\overline{\ASD}$
  on the $v$-fiber of the N\'eron model of~$A$
  } 
\right) - \kappa^\Int_{A,\ASD,v},
\]
where the constant~$\kappa^\Int_{A,\ASD,v}$ is chosen 
so
that~$\lhat_{A,\ASD,v}^\Int$ itself satisfies the normalization
condition~\eqref{eqn:normcondlhatADv}, and then the Bernoulli part
is what's left over, i.e., 
\[
\lhat_{A,\ASD,v}^{\vphantom{\Int}}(P) = \lhat^\Int_{A,\ASD,v}(P)+\lhat^\Bern_{A,\ASD,v}(P)
\]
We also note that 
\[
\text{$A$ has potential good reduction at $v$}
\quad\Longrightarrow\quad
\lhat^\Bern_{A,\ASD,v}=\kappa_{A,\ASD,v}^\Int=0.
\]
(See Section~\ref{section:overviewlocalhts} for the further details.)

For any finite extension~$L/K$, we write~$M_L$ for an appropriate
normalized set of absolute values on~$L$, and we let
\[
M_L^\bad(A) = \{ v\in M_L : \text{$A$ has bad reduction at $v$} \}.
\]
Then for~$P\in{A(L)}\setminus|\ASD|$, we define an
``inter\-sec\-tion-part'' and a ``Bernoulli-part'' of the global
canonical height via
\begin{align*}
\hhat_{A,\ASD}^\Bern(P) &= \frac{1}{[L:K]} \sum_{v\in M_L^\bad(A)} \lhat_{A,\ASD,v}^\Bern(P),\\
\hhat_{A,\ASD}^\Int(P) &= \frac{1}{[L:K]} \sum_{v\in M_L} \lhat_{A,\ASD,v}^\Int(P).
\end{align*}
With this
notation, there exists a \emph{local-global height constant}~$\kappa_{A,\ASD}$ so that
\begin{equation}
  \label{eqn:hPhPBhPIkapap}
  \hhat_{A,\ASD}(P) = \hhat_{A,\ASD}^\Bern(P) + \hhat_{A,\ASD}^\Int(P) - \kappa_{A,\ASD}
  \quad\text{for all~$P\in{A(\Kbar)}\setminus|\ASD|$.}
\end{equation}
We note that if~$\dim(A)=1$, i.e., if~$A$ is an elliptic curve, then
$\kappa_{A,\ASD}=0$.  However, if~$\dim(A)\ge2$,
then~$\kappa_{A,\ASD}$ is generally positive. 

Our main results are a Fourier series calculation and the following 
lower bound for the Bernoulli part of the canonical height in the case
that~$A$ is an abelian surface defined over a function field. This
theorem, and the Fourier averaging lemmas that we prove along the way,
are analogues of the key lemmas and results
in~\cite{hindrysilverman:lehmer}, where similar results are proven for
elliptic curves. However, we note that the main theorem
in~\cite{hindrysilverman:lehmer} is an unconditional Lehmer-type lower
bound for the canonical height on elliptic curves (with non-integral
$j$-invariant), while our result for abelian surfaces only gives a
lower bound for a suitable average of the Bernoulli part of the
height.

\begin{theorem}[Theorem~$\ref{theorem:hlen23len23}$ and
    Corollary~$\ref{corollary:conditionallehmer}$]    
  \label{theorem:mainthmintro}
Fix the following quantities\textup:
\begin{notation}
\item[$k$]
  an algebraically closed field of characterstic~$0$.
\item[$K/k$]
  a $1$-dimensional function field.
\item[$(A,\ThetaDivisor)/K$]
  an abelian variety~$A$ defined over~$K$ with an effective symmetric
  principal polarization $\Theta\in\Div_K(A)$.
\item[$\hhat_{A,\ThetaDivisor}$]
  the canonical height on~$A$ for the divisor $\ThetaDivisor$.
\item[$\hhat_{A,\ThetaDivisor}^\Bern,\hhat_{A,\ThetaDivisor}^\Int$]
  the Bernoulli and intersection parts of the canonical height on~$A$ for the divisor $\ThetaDivisor$.
\end{notation}
Assume that for every place~$v$ of~$K$, the abelian variety~$A$ has
either potential good reduction at~$v$ or totally multiplicative
reduction at~$v$, and that~$A$ has at least one place of
multiplicative reduction.
\begin{parts}
\Part{(a)}  
There are
constants~$\Cl[DZ]{jj10},\Cl[DZ]{jj11},\Cl[DZ]{jj8},\Cl[DZ]{jj9}>0$
and an integer~$d\ge1$ so that for all finite extensions~$L/K$ and all
sets of points $\Sigma\subset{A(L)}$
of~$\hhat_{A,\ThetaDivisor}$-height at most~$\Cr{jj10}$, there is a
subset~$\Sigma_0\subseteq\Sigma$ with~$\#\Sigma_0\ge\Cr{jj11}\#\Sigma$
so that the following double average\footnote{The averaging notation~$\Avg$  is
  fairly self-explanatory, but see
  Section~\ref{section:avgperiodicfuncs} for the precise definition.}
of the Bernoulli part of the heights of the points in~$\Sigma_0$
satisfies
\[
  \AvgL_{\substack{P,Q\in\Sigma_0\\ P\ne Q\\}}
  \AvgL_{T\in A[d]}
  \;\;
  \hhat_{A,\ThetaDivisor}^\Bern(P-Q+T)
  \ge
  \frac{\Cr{jj8}}{[L:K]^{2/3}} -  \frac{\Cr{jj9}}{\#\Sigma}.
\]
\Part{(b)}
Suppose that the subset~$\Sigma_0$ in~\textup{(a)} can always be
chosen 
so that it satisfies the further estimate
\begin{equation}
  \label{eqn:AvgPQAvgdIntgeLK23}
  \AvgL_{\substack{P,Q\in\Sigma_0\\ P\ne Q\\}}
  \AvgL_{T\in A[d]}
  \;\;
  \hhat_{A,\ThetaDivisor}^\Int(P-Q+T)
  \ge \kappa_{\ThetaDivisor},
\end{equation}
where~$\kappa_{\ThetaDivisor}$ is the constant appearing in~\eqref{eqn:hPhPBhPIkapap}.
Then there is a constant~$\Cl[DZ]{jj12}>0$ so that every non-torsion $P\in{A(\Kbar)}$ satisfies
\[
\hhat_{A,\ThetaDivisor}(P) \ge \frac{\Cr{jj12}}{\bigl[K(P):K\bigr]^2}.
\]
\end{parts}
\end{theorem}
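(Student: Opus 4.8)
The plan is to handle the theorem in two pieces. Part~(a), the averaging estimate for the Bernoulli part of the height, is the analytic core, and I would prove it by a Fourier-averaging argument modelled on~\cite{hindrysilverman:lehmer}; part~(b) then follows by a short descent on multiples of a single point, and the exponent bookkeeping in that descent is precisely what pins the exponent in part~(a) to $2/3$. Throughout write $\hhat=\hhat_{A,\ThetaDivisor}$ and likewise for its Bernoulli and intersection parts.

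For part~(b): given a non-torsion $P\in A(\Kbar)$ with $D=[K(P):K]$, put $L=K(P)$ and let $n$ be the least integer with $n\ge\max\bigl(2\Cr{jj9}D^{2/3}/\Cr{jj8},\,2/\Cr{jj11}\bigr)$, so that $n\le C'D^{2/3}$ for an absolute constant $C'$ (using $D\ge1$). If $\hhat(P)\ge\Cr{jj10}/n^2$ then already $\hhat(P)\ge\Cr{jj10}/(C'^2D^2)$ and we are done; otherwise the multiples $\Sigma=\{P,2P,\dots,nP\}\subset A(L)$ all have $\hhat$-height $j^2\hhat(P)<\Cr{jj10}$, so part~(a) produces $\Sigma_0$. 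By the choice of $n$ the error term $\Cr{jj9}/\#\Sigma=\Cr{jj9}/n$ in part~(a) is at most half the main term $\Cr{jj8}/D^{2/3}$, so the torsion-and-difference average of $\hhat^\Bern$ over $\Sigma_0$ is $\ge\Cr{jj8}/(2D^{2/3})$; adding the assumed lower bound~\eqref{eqn:AvgPQAvgdIntgeLK23} for the corresponding average of $\hhat^\Int$ and substituting the decomposition~\eqref{eqn:hPhPBhPIkapap}, whose two copies of $\kappa_{\ThetaDivisor}$ cancel, gives the same bound $\Cr{jj8}/(2D^{2/3})$ for the corresponding average of $\hhat$ itself (all averages restricted to $P'-Q'+T\notin|\ThetaDivisor|$). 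On the other hand each difference is $(a-b)P$ with $1\le|a-b|\le n-1$, and since $\hhat$ is a quadratic form with $\hhat(x+T)=\hhat(x)$ for torsion $T$, every term of that average equals $(a-b)^2\hhat(P)<n^2\hhat(P)$. Hence $n^2\hhat(P)>\Cr{jj8}/(2D^{2/3})$, i.e.\ $\hhat(P)>\Cr{jj8}/(2n^2D^{2/3})\ge\Cr{jj8}/(2C'^2D^2)$, and $\Cr{jj12}=\min\bigl(\Cr{jj10}/C'^2,\,\Cr{jj8}/(2C'^2)\bigr)$ works. (The exponent $2$ is forced: with exponent $\beta$ in part~(a) one would take $n\asymp D^\beta$ and obtain $\hhat(P)\gg D^{-3\beta}$, so $\beta=2/3$ is exactly right.)

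For part~(a), I would start from the Fourier expansion of Theorem~\ref{theorem:fourierexpansionofL}. At a place $v$ of totally multiplicative reduction, Mumford/Tate uniformization presents $A(\Kbar_v)$ as a rigid-analytic quotient of $\GG_m^2$ by a rank-$2$ period lattice; the ``position'' of a point is its valuation vector read in $(\RR/\ZZ)^2$, and $\lhat^\Bern_v$ equals, up to the period data and $\log\#k(v)$, an explicit bivariate second-Bernoulli-type function $\Bcal_v$ of the position, built from the monodromy pairing, of mean zero over torsion but not identically zero. First, the inner average over $T\in A[d]$: on the Fourier side it annihilates all non-$d$-divisible frequencies, and by the distribution formula for Bernoulli polynomials it collapses the $A[d]$-average of $\Bcal_v$ over translates of a point $R$ to a single rescaled, amplitude-damped copy $d^{-2}\Bcal_v(d\cdot\mathrm{pos}(R))$. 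Second, I would pass from $\Sigma$ to a subset $\Sigma_0$ of proportional size on which the differences $P'-Q'$ stay, uniformly over the finitely many bad places, outside the small region where $\sum_v\Bcal_v$ is most negative; a pigeonhole/counting argument on the toroidal positions of the points of $\Sigma$ should furnish such a $\Sigma_0$ and keep the surviving average of $\sum_v\Bcal_v$ over differences strictly positive. Third, bookkeeping: the measure of the excluded region, the number of places of bad reduction over $L$, and the normalizing factor $[L:K]^{-1}$ in $\hhat^\Bern$ are balanced against one another, and optimizing the pigeonhole scale should yield the lower bound of order $[L:K]^{-2/3}$ with error $O(1/\#\Sigma)$.

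The hard part will be the second step in dimension two. In the elliptic case of~\cite{hindrysilverman:lehmer} the positions live on a single circle, and classical one-dimensional tools---the three-distance theorem, continued fractions, explicit control of the sign of $B_2$---govern both how multiples distribute and where the Bernoulli polynomial is negative. Here the positions live on a $2$-torus, $\Bcal_v$ is genuinely bivariate with a negativity locus controlled by a monodromy pairing that varies with $v$, and one needs a uniform two-dimensional equidistribution/pigeonhole estimate together with quantitative control of that locus. Getting a positive main term to survive the double average while keeping the error decaying like $1/\#\Sigma$ and the lower bound of exact order $[L:K]^{-2/3}$ is the crux; given part~(a), the descent in part~(b) is routine.
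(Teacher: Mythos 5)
Your part~(b) is essentially the paper's Corollary~\ref{corollary:conditionallehmer}: take $\Sigma=\{jP\}_j$ of a size comparable to $D^{2/3}$, chosen so the error term in part~(a) is at most half the main term, then upper-bound the double average by the parallelogram law and compare. Your version is a little cleaner than the paper's (the paper separately treats the case where no admissible $M$ exists, you absorb that into the initial dichotomy); both are correct. One small gap you acknowledge in passing but do not fill: in order to apply the decomposition $\hhat=\hhat^\Int+\hhat^\Bern-\kappa_\Theta$ termwise you need $P'-Q'+T\notin|\Theta|$; this is part of the output of the paper's Theorem~\ref{theorem:hlen23len23} (via Bogomolov, Lemma~\ref{lemma:diffsontheta}), not something you get for free.

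For part~(a), however, your sketch misses the three ideas that actually make the argument work, and one piece of the sketch would fail if carried out literally.

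First, the $d$-average does not ``collapse to a rescaled copy $d^{-2}\Bcal_v(d\cdot\mathrm{pos}(R))$''. The bivariate periodic function $L(x,y)=\min_{m,n}F(x+m,y+n)$ has no distribution relation of that form. What actually happens (Corollary~\ref{corollary:avglambdaberntobern2}) is arithmetic, not just amplitude-damping: for $d$ chosen divisible by $2D_v/\gcd(a_v,b_v,c_v)^2$ at every bad $v$, the $d$-average kills every Fourier coefficient of $L$ \emph{except} those supported on the three lines $\bfF_1=0,\bfF_2=0,\bfF_3=0$ (where the sine factor in Theorem~\ref{theorem:fourierexpansionofL} is replaced by a limit), and the surviving series are exactly three one-variable $B_2$'s. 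Crucially, once $(a_v,b_v,c_v)$ are $\Symp_4(\ZZ)$-reduced so that $0\le 2b\le a\le c$ (Lemma~\ref{lemma:quadformwbpositive}), the coefficients $\a,\g,b$ in front of those three $B_2$'s are all nonnegative. That nonnegativity is what replaces the ``negativity locus control'' you flag as the crux; there is no equidistribution argument.

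Second, because the $d$-average is already a nonnegative combination of $B_2$'s at every bad place, the paper does \emph{not} pigeonhole ``uniformly over the finitely many bad places.'' That cannot work: the bad places of $L$ grow in number with $[L:K]$, and a pigeonhole over all of them simultaneously would not produce a proportionally sized $\Sigma_0$. Instead, the pigeonhole (Lemma~\ref{lemma:pigeonholebound}) is applied at a single place $w_0$, chosen to lie over a fixed bad $v_0$ with maximal ramification index $e_{w_0}$; this provides a genuine positive main term of size $\asymp e_{w_0}/n$. At all other places one uses only the Fej\'er-type average bound (Lemma~\ref{lemma:fourieravgbound}), whose main term is nonnegative by the normalization above and whose error contributes the $-C/\#\Sigma$ term.

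Third, the exponent $2/3$ is not an output of ``optimizing a pigeonhole scale''; it comes from Lemma~\ref{lemma:holderineq}, a Cauchy--Schwarz/calculus optimization for $\a e_0+\b\sum_i e_i^{-1}$ given $e_0=\max e_i$ and $\sum e_i=n$, yielding the lower bound $(\a^2\b n)^{1/3}$. The homogeneity of $\abcFunctionOne$ and $\abcFunctionThree$ under base change ($a_w=e_w a_v$, etc.) is what makes $d$ stable under extension and puts the place-by-place contributions exactly into the shape needed for that lemma. Without naming these three ingredients --- the choice of $d$ and the resulting positivity, the single-place pigeonhole, and the H\"older lemma --- the plan for part~(a) is not a proof, and the step you flag as the crux would, as described, head in a direction that cannot be completed.
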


We make several remarks, but again we refer the reader to
Section~\ref{section:surveyolderresults} for more details of the
history and known results surrounding Lehmer's conjecture.
To ease notation, we write
\[
D = \bigl[K(P):K\bigr]
\]
for the degree of the field of definition of~$P$ (although we note
that later we assign a different meaning to~$D$).

\begin{remark}
  Our proof uses the Fourier averaging technique that has previously
  been used for the classical Lehmer
  conjecture~\cite[Blanksby--Montgomery~(1971)]{MR0296021} and for
  Lehmer's conjecture on elliptic
  curves~\cite[Hindry--Silverman~(1990)]{hindrysilverman:lehmer}.  A
  crucial ingredient in the one-dimensional cases is that the Fourier
  series associated to the local heights has non-negative
  coefficients, a fact that is no long true in the higher dimensional
  case. Thus our proof has two key components. First we compute the
  relevant two-dimensional Fourier series attached to a periodic
  two-variable quadratic form with its associated hexagonal
  fundamental domain.  Second we
  deal with the issue that the Fourier series has both positive and
  negative Fourier coefficients via a subsidiary averaging process
  over a suitable collection of torsion points.
\end{remark}

\begin{remark}
  Currently the best known result for general abelian surfaces over number
  fields\footnote{Presumably Masser's proof carries over to the
    function field setting, where the~$\e$ might well be superfluous.}
  is due to Masser.  More generally it is proven in~\cite[Masser
    1984]{MR766295} that on an abelian variety of dimension~$g$, there
  is a Lehmer estimate
  \begin{equation}
  \label{eqn:masserhAPD2g62g}
  \hhat_A(P) \ge  \frac{C_\e(A/K)}{D^{2g+6+2/g+\e}}.
  \end{equation}
  Thus for abelian surfaces, i.e., for~$g=2$, Masser's lower bound
  is $O(D^{-11})$, which may be compared to our conditional lower bound
  of~$O(D^{-2})$ and with the conjectural lower bound
  of~$O(D^{-1/2})$.  Masser's proof uses auxiliary polynomials and
  methods from Diophantine approximation, a technique that has been
  long used in studying Lehmer's conjecture.
\end{remark}

\begin{remark}
  One would of course like to prove a result for number fields that is
  analogous to Theorem~\ref{theorem:mainthmintro}, much as was done
  in~\cite{hindrysilverman:lehmer} for elliptic curves.
  However, the
  Fourier expansion for the archimedean local height is likely to
  include negative Fourier coefficients, just as in the
  non-archimedean case.  And these negative Fourier coefficients would
  vitiate the argument used in the present paper, since we rely on the
  fact that our absolute values are discrete, and thus that the
  component groups on the N\'eron model are finite and are
  well-behaved under finite extension. For archimedean absolute
  values, the ``fiber'' on the ``N\'eron model'' should be viewed as
  having ``bad reduction'' with ``component group'' equal to a real
  torus. We thus have no consistent way to calculate which multiples
  of a point lie on (or near) the ``identity component'' in an
  archimedean topology.
\end{remark}

\begin{remark}
  Fourier averaging techniques have also been used successfully for
  studying Lang's height lower bound conjecture, in which one fixes a
  field~$K$ and varies the abelian variety. Lang's conjecture asserts
  (roughly) that for all abelian varieties of~$A/K$ of dimesion~$g$
  and all points~$P\in{A(K)}$ whose multiples are Zariski dense
  in~$A$, we have
  \[
  \hhat_A(P) \ge c_1(K,g) h(A/K) - c_2(K,g),
  \]
  where~$h(A/K)$ is an appropriate height of the abelian variety.
  For~$g=1$, this was proven for function fields, and conditionally
  for number fields assuming on Szpiro's conjecture, using Fourier
  averaging~\cite[Hindry--Silverman]{hindrysilverman:lehmer}. The
  difficulty in directly extending these proofs to abelian surfaces,
  even conditionally on a Szpiro-type conjecture, is again the
  two-fold problem of negative Fourier coefficients and that
  pesky~$\kappa_{A,\ASD}$ constant. However, see~\cite[David (1993)]{MR1254751}
  and~\cite[Pazuki
    (2013)]{MR3081000} for  Lang-style lower bounds for abelian
  varieties in which the lower bound has a correction term that
  measures the distance to the boundary of moduli space.
\end{remark}

\begin{remark}
  We also hope that it may be possible to extend our results to
  abelian varieties of dimension three or greater. However, it seems a
  challenging problem to write down an explicit formula for the
  Fourier series of the Bernoulli part of the local height in higher
  dimensions, since our two-dimensional hexagonal fundamental domain
  (see Figure~\ref{figure:hexagonandsquare}) would be replaced by
  a~$g$-dimensional parallelepiped. However, if this could be done, we
  would not be surprised if it could be used to prove a function field
  Lehmer-type bound with exponent~$2$ for the Bernoulli-part of the
  height.
\end{remark}

\section{Survey of Previous Results and Methods}
\label{section:surveyolderresults}
In this section we give a brief overview of the study of Lehmer-type
height lower bounds.  We continue with the notation
\[
D = \bigl[K(P):K\bigr].
\]
Lehmer's original conjecture~\cite[Lehmer~(1933)]{MR1503118}, actually
phrased as a question, says that\footnote{We assume throughout this
  historical survey (Section~\ref{section:surveyolderresults}) that
  ``trivial counter-examples'' are excluded. Thus for Lehmer's
  original conjecture, we assume that~$\a$ is non-zero and not a root of unity, for
  elliptic curves~$P$ is a non-torsion point, and for abelian
  varieties we assume that the iterates of~$P$ are Zariski dense.}
\[
h(\a) \ge CD^{-1}.
\]
General Lehmer-type estimates\footnote{We use the phrase ``Lehmer-type
  estimate'' to mean a height lower bound that decays at worst
  polynomially in the degree~$D$. We note that it is relatively easy
  to obtain exponentially decaying bounds.}  were proven in the
classical case in~\cite[Blanksby--Montgomery~1971]{MR0296021} using
Fourier series methods and in~\cite[Stewart~1978]{MR507748} using
auxiliary polynomials. Both proofs give bounds of the form
\[
h(\a) \ge CD^{-2}(\log D)^{-1}.
\]
Stewart's methods were applied
in~\cite[Dobrowolski~1979]{dobrowolski:lehmer}
to achieve the following bound, which is ever-so-close to Lehmer's
conjecture:
\[
h(\a) \ge CD^{-1} \left(\frac{\log\log D}{\log D}\right)^3.
\]
Dobrowolski's innovation was to use the Frobenius $p$-power map for
suitably many~$p$ to greatly increase the power of the vanishing
lemma.

The first general Lehmer-type estimate for elliptic curves was given
in~\cite[Anderson--Masser~(1980)]{MR591611}, where a lower
bound of roughly~$D^{-10}$ was given. This was subsequently improved
in~\cite[Masser~(1989)]{masser:lehmer} to
\[
\hhat_E(P) \ge C D^{-3}(\log D)^{-2}.
\]
Masser's proof uses auxiliary polynomials. A Fourier series proof of
the same precision was given
in~\cite[Zhang~(1989)]{zhang1989unpublished}.

Stronger results are known for restricted collections of elliptic
curves.  Notable is the result~\cite[Laurent (1983)]{laurent:lehmer},
who proves a Dobrowolski-type bound
\[
\hhat_E(P) \ge CD^{-1} \left(\frac{\log\log D}{\log D}\right)^3
\quad\text{if $E$ has complex multiplication.}
\]
And building on Zhang's ideas, Masser's result was improved
in~\cite[Hindry--Silverman~(1990)]{hindrysilverman:lehmer} to
\[
\hhat_E(P) \ge C D^{-2}(\log D)^{-2}
\quad\text{if $j(E)$ is non-integral.}
\]

There are also many results proving Lehmer-type estimates for points
defined over restricted types of fields. One of the earliest such
results is the proof~\cite[Smyth~(1971)]{smyth:lehmer} that Lehmer's
conjecture is true for all~$\a\in\Qbar^*$ such that~$\a^{-1}$ is not a
$\Qbar/\QQ$-Galois conjugate of~$\a$. (One says that~$\a$ is
non-reciprocal.)  Even stronger results are known for points defined
over abelian extensions of the ground field~$K$. It is shown
in~\cite[Amoroso--Dvornicich(2000)]{MR1740514} that
\[
h(\a) \ge C(K) > 0 \quad\text{for all non-zero non-roots of unity $\a\in K^\ab$,}
\]
and analogous estimates for points defined over~$K^\ab$ were proven
for elliptic curves in~\cite[Baker~(2003)]{MR1979685}
and~\cite[Silverman~(2004)]{MR2029512}, and then for abelian varieties
in~\cite[Baker--Silverman~(2004)]{MR2067482}. Note that in these
abelian extension results, the lower bounds are independent of~$D$.
Under the weaker assumption that~$K(P)/K$ is a Galois extension, it is
shown in~\cite[Galateau--Mah{\'e}~(2017)]{MR3598828} that the elliptic
Lehmer conjecture is true.

We next consider higher dimensional abelian varieties.  For a
simple abelian variety~$A/K$ of dimension~$g$ and appropriate choice
of canonical height, the current conjecture~\cite{MR1799933,MR766295}
appears to be
\[
\hhat_A(P) \ge C D^{-1/g},
\]
although no one has yet even managed to get even~$D^{-1}$. It is shown
in~\cite[Masser~(1984)]{MR766295} that
\[
\hhat_A(P) \ge C_\epsilon D^{-2g-6-2/g-\epsilon},
\]
and if~$A$ has complex multiplication, then Dobrowolski-type bounds
have been proven in~\cite[David--Hindry~(2000)]{MR1799933}
and~\cite[Ratazzi~(2008)]{MR2445828}.  For the $g$-fold product $E^g$ of an elliptic
curve, the estimate
\[
\hhat_{E^g}(P) \ge C D^{-1-1/2g}(\log D)^{-2/g}
\]
is proven in~\cite[Galateau--Mah{\'e}~(2017)]{MR3598828}.

A Lehmer-type conjecture involves fixing one geometric object such
as~$\GG_m$,~$E$, or~$A$ defined over a field~$K$, and finding height
lower bounds for points defined over extensions of~$K$. Dem'janenko
and Lang conjectured a different sort of height lower bound for
elliptic curves by fixing a field~$K$ and allowing the elliptic curve
to vary. The original conjecture had the form
\begin{multline*}
\hhat_E(P) \ge c_1(K)\log\Norm\Dcal_{E/K} - c_2(K) \\
\text{for all $E/K$ and all non-torsion $P\in E(K)$,}
\end{multline*}
and this has been generalized to abelian varieties with the
log-discri\-mi\-nant replaced by an appropriate height of the abelian
variety, e.g., the height~$h(A/K)$ used by Faltings in his proof of the
Mordell conjecture.  A Fourier series argument was used
in~\cite[Hindry--Silverman~(1988)]{hindrysilverman:integralpts} to
prove that Lang's conjecture for elliptic curves is a consequence of
Szpiro's conjectured inequality relating the discriminant and the
conductor of an elliptic curve, so in particular Lang's conjecture is
a theorem over one-dimensional characteristic~$0$ function fields.
However, for higher dimensional abelian varieties, the best known
estimates include an error term that grows as the moduli point of the
abelian variety approaches the boundary of the associated moduli
space; see for example~\cite[David~(1993)]{MR1254751}.


The definition of the canonical height of points on abelian varieties
can be extended to assign a canonical height to subvarieties of higher
dimension, and one can formulate a Lehmer conjecture and prove
Lehmer-type lower bounds for these higher dimesional heights. See for
example the series of papers
by David and Philippon~\cite{MR1478502,MR1949109,MR2355454}.

In this brief section, we have only touched on some of the work done
on Lehmer's conjecture.  For additional information, the reader might
consult the lengthy (unpublished) survey
article~\cite[Verger-Gaugry~(2019)]{VergerGaugrysurvey} that includes
an extensive bibliography of articles related to the conjectures of
Lehmer and Schinzel-Zassenhaus.

\section{An Overview of Canonical Local and Global Heights}
\label{section:overviewlocalhts}

We follow the exposition in Hindry's
notes~\cite{hindrynotesonlocalheights}; see also the
articles~\cite{MR1418354,MR1458753,MR1662481} by Werner
(especially~\cite{MR1458753}). We set the following
notation:\footnote{For comparison
  with~\cite{hindrynotesonlocalheights}, we note that Hindry's~$i_v(D,P)$
  is
  our~$\left\langle\overline{D}\cdot\overline{P}\right\rangle_{\Acal,v}$,
  and we have adopted his $B_{D,v}\bigl(j_v(P)\bigr)$ notation;
  cf.\ \cite[(3.10) and (3.11)]{hindrynotesonlocalheights}.  We also
  point the reader to the brief discussion of the function field
  setting in~\cite[Section~5]{hindrynotesonlocalheights}.}

\begin{notation}
\item[$k$]
  an algebraically closed field~$k$ of characteristic $0$.
\item[$K/k$]
  a $1$-dimensional function field over $k$.
\item[$M_L$]
  for finite extensions~$L/K$, 
  a complete set of absolute values on~$L$, normalized so that~$w(L^*)=\ZZ$
  for all~$w\in{M_L}$.
\item[$A/K$]
  an abelian variety of dimension~$g$ defined over~$K$.
\item[$\Acal$]
  the N\'eron model of $A/K$. 
\item[$\Acal^{\circ}$]
  the identity component of the N\'eron model of $A/K$.
\end{notation}

For~$P=[x_0,\ldots,x_N]\in\PP^N(\Kbar)$,
the \emph{Weil height} of~$P$ is defined by choosing a finite extension~$L/K$
with~$P\in\PP^N(L)$ and setting
\[
h(P)  = \frac{1}{[L:K]}\sum_{w\in M_L} \max\bigl\{-w(x_i)\bigr\}.
\]
The value is independent of the choice of~$L$.\footnote{Those who are
  familiar with the theory of Weil heights may wonder where the local
  factor~$[L_w:K_w]$ has gone.  The answer is that there is no residue
  degree, since our scalar field~$k$ is algebraically closed, and the
  ramification degree is already absorbed in the way that we have
  normalized the absolute values in~$M_K$ and~$M_L$, i.e.,
  if~$\a\in{K^*}$ and~$w\in{M_L}$ lies over~$v\in{M_K}$, then
  $w(\a)=e(w/v)v(\a)$ already includes the ramification degree.}

\begin{theorem}
\label{theorem:neronfncexist}
\textup{(N\'eron)}
Let~$A/K$ be an abelian variety. There exists is a unique
collection of functions
\[
\lhat_{\ASD,v} : A(\Kbar_v)\setminus|\ASD| \longrightarrow \RR,
\quad\text{where $\ASD\in\Div_K(A)$ and $v\in M_K$,}
\]
so that the following are true\textup:
\begin{parts}
\Part{(a)}
The map~$\lhat_{\ASD,v}$ is continuous, where we give~$A(\Kbar_v)$ the $v$-adic topology.
\Part{(b)}  
For all $\ASD,\ASD'\in\Div_K(A)$ and all $v\in{M_K}$,
\[
\lhat_{\ASD+\ASD',v} = \lhat_{\ASD,v} + \lhat_{\ASD',v}
\quad\text{on $A(\Kbar_v)\setminus\bigl(|\ASD|\cup|\ASD'|\bigr)$.}
\]
\Part{(c)}  
For all morphisms $\f:A\to{B}$ of abelian varieties over~$K$ and all $\ASD\in\Div_K(B)$,
\[
\lhat_{A,\f^*\ASD,v} = \lhat_{B,\ASD,v}\circ \f \quad\text{on $A(\Kbar_v)\setminus|\f^*\ASD|$.}
\]
\Part{(d)}
For all rational functions~$f\in{K(A)}$,
\[
\lhat_{\div(f),v} = v\circ f \quad\text{on $A(\Kbar_v)\setminus\bigl|\div(f)\bigr|$.}
\]
\Part{(e)}
\textup{(Normalization)}
For all $\ASD\in\Div_K(A)$ and all~$v\in{M_K}$, we have\footnote{Without
  this normalization, which N\'eron did not impose in his original
  formulation, the function~$\lhat_{\ASD,v}$ is only well-defined up to
  an~$M_K$-constant. We also mention that if the absolute value on~$K$
  is archimedean, then the normalization condition is equivalent to
  $\int_{A(\Kbar_v)} \lhat_{\ASD,v}(P)\,d\mu(P) = 0$, where~$\mu$ is Haar
  measure on~$A(\Kbar_v)\cong{A(\CC)}$.}
\begin{equation}
\label{eqn:limn2gPAn0}
  \lim_{N\to\infty} N^{-2g} \sum_{P\in A[N]\setminus|\ASD|} \lhat_{\ASD,v}(P)=0.
\end{equation}
\Part{(f)}
\textup{(Good Reduction)}
If~$A$ has potential good reduction at~$v$, then\footnote{N\'eron
  proved that this formula is true up to a
  constant. See~\cite{MR1413570} for a proof that the average of the
  intersection multiplicities over torsion points goes to~$0$, which
  implies that the constant vanishes.}
\[
\lhat_{\ASD,v}(P) =  \left\langle\overline{\ASD}\cdot\overline{P}\right\rangle_{\Acal,v}
\]
is the intersection index over~$v$ of the closures of~$\ASD$ and~$P$
in~$\Acal$. In the case of potential good reduction we have
\[
\lhat_{\ASD,v}(P) \ge 0 \quad\text{for all~$P\in A(\Kbar_v)\setminus|\ASD|$.}
\]
\Part{(g)}
\textup{(Bad Reduction)}
Let
\[
j_v : A(K) \longrightarrow (\Acal/\Acal^\circ)_v(k)
\]
be the homomorphism that sends a point to its image in the group of
components of the N\'eron model over~$v$. Then there is a
function\footnote{N\'eron further proved that the values
  of~$\BB_{\ASD,v}$ are rational numbers with denominators
  dividing~$2\#(\Acal/\Acal^\circ)_v(k)$.}
\[
\BB_{\ASD,v} : (\Acal/\Acal^\circ)_v(k) \longrightarrow \RR
\]
so that
\begin{equation}
  \label{eqn:LDvPivDPBBDvP}
  \lhat_{\ASD,v}(P) =  \left\langle\overline{\ASD}\cdot\overline{P}\right\rangle_{\Acal,v}
  + \BB_{\ASD,v}\bigl(j_v(P)\bigr) - \kappa_{\ASD,v},
\end{equation}
where again~$\kappa_{\ASD,v}$ is chosen so that~\eqref{eqn:limn2gPAn0} holds.
\Part{(h)}
\textup{(Local-Global Decomposition)} There is a constant
$\kappa_{\ASD}$ so that for all finite extensions~$L/K$ and all
$P\in{A(L)}\setminus|\ASD|$,\footnote{\textbf{Important Note}: When
  the local heights are normalized via~\eqref{eqn:limn2gPAn0}, then
  their weighted sum will generally differ from the glocal height by a
  non-zero constant that we have denoted~$\kappa_{A,\ASD}$;
  see~\cite[Appendix]{hindrynotesonlocalheights} for an
  example. However, if~$\dim(A)=1$, then~$\kappa_{A,\ASD}=0$, which is
  why this issue does not arise when working with elliptic curves.}
\[
\hhat_\ASD(P) = \frac{1}{[L:K]} \sum_{w\in M_K}  \lhat_{\ASD,w}(P)  - \kappa_{\ASD}.
\]
\end{parts}
\end{theorem}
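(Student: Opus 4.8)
This is N\'eron's theorem, and the plan is to prove uniqueness first by a rigidity argument, then to construct the functions by Tate's limiting process applied to local Weil functions, and finally to extract the good/bad-reduction formulas (f), (g) from the description of the local height as an intersection number on a model and deduce the local--global comparison (h) from the product formula. I set up notation as follows: let $C/k$ be the smooth projective curve with $k(C)=K$, so that $M_K$ is the set of closed points of $C$, and for each place $v$ fix a regular proper flat model $\Xcal_v\to\Spec\Ocal_{C,v}$ of $A$, on whose special fibre the N\'eron component group $(\Acal/\Acal^\circ)_v$ will govern the vertical corrections. By additivity (b) together with $2\ASD = (\ASD+[-1]^*\ASD) + (\ASD-[-1]^*\ASD)$, one reduces throughout to the two cases $\ASD$ symmetric (so $[n]^*\ASD\sim n^2\ASD$) and $\ASD$ antisymmetric (so $[n]^*\ASD\sim n\ASD$); I carry out the symmetric case below, the other being identical with the exponent $4$ replaced by $2$.

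\textbf{Uniqueness.} Suppose $\lhat_{\ASD,v}$ and $\lhat'_{\ASD,v}$ both satisfy the listed properties, and put $\mu_{\ASD,v}=\lhat_{\ASD,v}-\lhat'_{\ASD,v}$. Choosing $g\in K(A)$ with $[2]^*\ASD = 4\ASD + \div(g)$, the functoriality (c), the additivity (b), and the principal-divisor formula (d) combine --- crucially, $g$ depends only on $\ASD$ and not on the collection --- into the \emph{exact} functional equation $\mu_{\ASD,v}\circ[2] = 4\mu_{\ASD,v}$ off $|\ASD|\cup[2]^{-1}|\ASD|$. Near any point of $|\ASD|$ the divisor $\ASD$ is locally principal (being, since $A$ is projective, linearly equivalent to a divisor avoiding that point), so $\lhat_{\ASD,v}$ and $\lhat'_{\ASD,v}$ have the same logarithmic singularity there; hence $\mu_{\ASD,v}$ extends to a continuous function on $A(\Kbar_v)$ which, being regular along $|\ASD|$ and $M_K$-bounded away from it, is globally bounded, and the functional equation then holds everywhere by continuity. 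Iterating gives $\mu_{\ASD,v}(P) = 4^{-n}\mu_{\ASD,v}([2^n]P)$ for all $n\ge1$, and boundedness forces $\mu_{\ASD,v}\equiv 0$. Only at this stage does (e) enter: it pins down the single additive constant that N\'eron's construction otherwise leaves free.

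\textbf{Existence and properties (a)--(e).} I would build the functions by the classical two-step procedure. First, for each $v$ choose a local Weil function $\lambda^{\mathrm W}_{\ASD,v}$ for $\ASD$ (continuous, $M_K$-bounded away from $|\ASD|$, with the expected logarithmic singularity along $|\ASD|$, read off a projective embedding); the discrepancy $\lambda^{\mathrm W}_{\ASD,v}\circ[2] - 4\lambda^{\mathrm W}_{\ASD,v} - v\circ g$ has no singularities and is bounded, so the Tate-type limit
\[
\widetilde\lambda_{\ASD,v}(P) = \lim_{n\to\infty} 4^{-n}\Bigl(\lambda^{\mathrm W}_{\ASD,v}\bigl([2^n]P\bigr) - \sum_{j=0}^{n-1}4^{j}\,v\bigl(g([2^{j}]P)\bigr)\Bigr)
\]
converges uniformly on compacta of $A(\Kbar_v)\setminus|\ASD|$ to a continuous function; spreading this over the symmetric/antisymmetric decomposition and extending by additivity over $\Div_K(A)$ yields functions satisfying (a)--(d). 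Second, set $\lhat_{\ASD,v} = \widetilde\lambda_{\ASD,v} - \kappa_{\ASD,v}$ with $\kappa_{\ASD,v} = \lim_{N\to\infty} N^{-2g}\sum_{P\in A[N]\setminus|\ASD|}\widetilde\lambda_{\ASD,v}(P)$; the existence of this limit is a finiteness/equidistribution statement for the special fibre over $v$ --- for potential good reduction it is the computation of \cite{MR1413570}, and the general case reduces to that together with the component-group contribution described next --- and it gives the normalization (e).

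\textbf{Reduction formulas (f), (g) and the constant $\kappa_\ASD$ in (h).} On the regular model one has, for $P\in A(L)$, the identity $\widetilde\lambda_{\ASD,v}(P) = (\overline{\ASD} + \Phi_{\ASD,v})\cdot\overline{P}$, where $\overline{\ASD},\overline{P}$ are closures in $\Xcal_v$ and $\Phi_{\ASD,v}$ is the unique $\QQ$-combination of fibral components making $\overline{\ASD}+\Phi_{\ASD,v}$ orthogonal to every fibral component (the admissible/Arakelov normalization of the vertical part, so the pairing descends to $\Acal$). If $A$ has potential good reduction at $v$ the special fibre of $\Acal$ is connected, $\Phi_{\ASD,v}$ is a multiple of that fibre, $\Phi_{\ASD,v}\cdot\overline P = 0$, and one is left with $\langle\overline{\ASD}\cdot\overline P\rangle_{\Acal,v}$, which is $\ge 0$ for $\ASD$ effective and is already normalized by \cite{MR1413570}; this is (f). In general $\Phi_{\ASD,v}\cdot\overline P$ depends on $P$ only through the fibral component that $\overline P$ meets, i.e.\ through $j_v(P)\in(\Acal/\Acal^\circ)_v(k)$, so \eqref{eqn:LDvPivDPBBDvP} holds with $\BB_{\ASD,v}(j_v(P))$ essentially this vertical term; its values are rational with denominator dividing $2\#(\Acal/\Acal^\circ)_v(k)$ because $\Phi_{\ASD,v}$ comes from inverting the negative-semidefinite fibral intersection matrix, whose discriminant group is controlled by $\#(\Acal/\Acal^\circ)_v(k)$. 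Finally, summing the classically normalized local functions over $w\in M_L$, the per-place error terms cancel by the product formula $\sum_w w(g(P)) = 0$ --- valid because $k$ is algebraically closed, so principal divisors on $C$ have degree $0$ and no residue factors intervene --- making the sum an exact quadratic function that, being within $O(1)$ of a Weil height, equals $\hhat_\ASD$; the mismatch between that normalization and the one imposed by (e), supported at the finitely many bad places, is exactly the constant $\kappa_\ASD$ of (h), and it vanishes when $\dim A = 1$ since then there are no fibral corrections. The single genuinely substantive point --- and the one I expect to be the main obstacle --- is (g): that the vertical correction depends on $P$ only through $j_v(P)$, with the stated denominator bound, and that these corrections behave compatibly under the base extensions needed to pass from $A(L)$ to $A(\Kbar_v)$; this rests on the structure theory of N\'eron models relating the fibral intersection lattice of a regular model, its discriminant group, and the order of the component group.
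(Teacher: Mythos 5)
The paper does not prove this theorem; it is stated as a known result of N\'eron with references to \cite{hindrynotesonlocalheights} and \cite{MR1418354,MR1458753,MR1662481}, so there is no in-paper argument to compare against. Your sketch follows the standard route --- uniqueness by a rigidity (functional-equation-plus-boundedness) argument, existence via Tate's telescoping limit applied to a local Weil function, (f)--(g) by intersection theory on a regular model with a fibral correction divisor chosen orthogonal to the special fibre, and (h) from the product formula --- and is a correct outline of what those references establish.

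One internal inconsistency in your uniqueness paragraph is worth resolving. You derive the \emph{exact} functional equation $\mu_{\ASD,v}\circ[2]=4\mu_{\ASD,v}$ from (b), (c), (d) (correctly noting that $g$ with $[2]^*\ASD=4\ASD+\div(g)$ is unique up to an element of $k^*$, which has trivial valuation since $k$ is algebraically closed), iterate, and use boundedness to conclude $\mu_{\ASD,v}\equiv 0$; yet your closing sentence asserts that (e) is needed to ``pin down the single additive constant.'' These cannot both be right. Either (b)--(d) are read as exact equalities, in which case your iteration already yields $\mu\equiv 0$ and (e) is a consequence rather than an independent axiom; or (b)--(d) are read only up to $M_K$-constants (the reading the paper's footnote to (e) implicitly takes), in which case the functional equation you get is only $\mu_{\ASD,v}\circ[2]=4\mu_{\ASD,v}+\text{const}$, the iteration shows $\mu_{\ASD,v}$ is a constant, and (e) is genuinely needed to eliminate it. You should commit to one reading. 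A smaller point: boundedness of $\mu_{\ASD,v}$ on all of $A(\Kbar_v)$ does not follow merely from continuity and regularity along $|\ASD|$, since $A(\Kbar_v)$ is not compact in the $v$-adic topology; the usual fix is to work on $A(L_w)$ for each finite extension $L_w/K_v$ and use the integral model there, a step worth making explicit if you want the sketch to be self-contained.
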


\begin{definition}
\label{definition:intandbernlochts}
With notation as in Theorem~\ref{theorem:neronfncexist}, we define the
\emph{normalized intersection local height} and the \emph{normalized
  Bernoulli local height} to be, respectively,
\begin{equation}
  \label{eqn:deflambdaintandbern}
  \lhat_{\ASD,v}^\Int(P)
  =  \left\langle\overline{\ASD}\cdot\overline{P}\right\rangle_{\Acal,v} - \kappa_{\ASD,v}^\Int
  \quad\text{and}\quad
  \lhat_{\ASD,v}^\Bern(P)
  = \BB_{\ASD,v}\bigl(j_v(P)\bigr) - \kappa_{\ASD,v}^\Bern,
\end{equation}
where the constants~$\kappa_{\ASD,v}^\Int$ and~$\kappa_{\ASD,v}^\Bern$
are chosen to ensure the normalization formulas
\begin{equation}
  \label{eqn:normalizationIntBern}
   \frac{1}{N^{2g}}\sum_{T\in A[N]} \lhat_{\ASD,v}^\Int(T) \xrightarrow[N\to\infty]{} 0
  \quad\text{and}\quad
  \frac{1}{N^{2g}}\sum_{T\in A[N]} \lhat_{\ASD,v}^\Bern(T) \xrightarrow[N\to\infty]{} 0
\end{equation}
We note that Theorem~\ref{theorem:neronfncexist}(f) says that if~$A$
has potential good reduction at~$v$, then
\[
\lhat_{\ASD,v}=\lhat_{\ASD,v}^\Int,\quad
\kappa_{\ASD,v}^\Int=0,\quad\text{and}\quad
\lhat_{\ASD,v}^\Bern=0,
\]
so the added complication
of~\eqref{eqn:deflambdaintandbern}
and~\eqref{eqn:normalizationIntBern} are only needed if~$A$ does not
have potential good reduction.
\end{definition}

\begin{definition}
\label{definition:globalintbernhts}
We define the \emph{global intersection height}
and the \emph{global Bernoulli height} as follows: For~$P\in{A(\Kbar)\setminus|\ASD|}$, 
\begin{align*}
\hhat_{A,\ASD}^\Int(P) &= \frac{1}{\bigl[K(P):K\bigr]}  \sum_{w\in M_{K(P)}} \lhat_{A,\ASD,v}^\Int(P), \\*
\hhat_{A,\ASD}^\Bern(P) &= \frac{1}{\bigl[K(P):K\bigr]} \sum_{w\in M_{K(P)}}\lhat_{A,\ASD,v}^\Bern(P).
\end{align*}
\end{definition}

The next result summarizes how our various normalizations and
normalizing constants fit together.

\begin{proposition}
\label{proposition:hteqintplusbernhts}
With notation as in Theorem~$\ref{theorem:neronfncexist}$ and
Definitions~$\ref{definition:intandbernlochts}$
and~$\ref{definition:globalintbernhts}$, we have
\begin{align}
  \label{eqn:lhatexacteqlhatintpluslhatbern}
  \lhat_{\ASD,v}(P) &= \lhat_{\ASD,v}^\Int(P) + \lhat_{\ASD,v}^\Bern(P). \\
  \label{eqn:hhatsumintbernparts}
  \hhat_{\ASD}(P) &= \hhat_{\ASD}^\Int(P) + \hhat_{\ASD}^\Bern(P) - \kappa_{A,\ASD}.
\end{align}
\end{proposition}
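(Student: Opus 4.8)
The plan is to derive both identities formally from N\'eron's Theorem~\ref{theorem:neronfncexist} and the defining normalizations in Definitions~\ref{definition:intandbernlochts} and~\ref{definition:globalintbernhts}; no new geometric input is required, so this is essentially a bookkeeping argument.

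For the local identity~\eqref{eqn:lhatexacteqlhatintpluslhatbern}, I would add the two defining formulas in~\eqref{eqn:deflambdaintandbern} for $\lhat_{\ASD,v}^\Int$ and $\lhat_{\ASD,v}^\Bern$ and compare with N\'eron's decomposition~\eqref{eqn:LDvPivDPBBDvP}. The intersection term $\langle\overline{\ASD}\cdot\overline{P}\rangle_{\Acal,v}$ and the Bernoulli term $\BB_{\ASD,v}(j_v(P))$ match on the nose, leaving
\[
\lhat_{\ASD,v}^\Int(P) + \lhat_{\ASD,v}^\Bern(P) = \lhat_{\ASD,v}(P) + \bigl(\kappa_{\ASD,v} - \kappa_{\ASD,v}^\Int - \kappa_{\ASD,v}^\Bern\bigr),
\]
so the claim reduces to showing that the bracketed constant vanishes, i.e.\ that $\kappa_{\ASD,v} = \kappa_{\ASD,v}^\Int + \kappa_{\ASD,v}^\Bern$. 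For this I would average the displayed equation over $P\in A[N]\setminus|\ASD|$, divide by $N^{2g}$, and let $N\to\infty$: the average of $\lhat_{\ASD,v}$ tends to $0$ by the normalization~\eqref{eqn:limn2gPAn0} of Theorem~\ref{theorem:neronfncexist}(e), while the averages of $\lhat_{\ASD,v}^\Int$ and $\lhat_{\ASD,v}^\Bern$ tend to $0$ by the normalizations~\eqref{eqn:normalizationIntBern} that define $\kappa_{\ASD,v}^\Int$ and $\kappa_{\ASD,v}^\Bern$; hence the constant must be $0$. The only point worth a remark is that~\eqref{eqn:normalizationIntBern} is written as a sum over all of $A[N]$ whereas $\langle\overline{\ASD}\cdot\overline{P}\rangle_{\Acal,v}$ is defined only off $|\ASD|$: one reads all three averages over the common domain $A[N]\setminus|\ASD|$, and this does not affect the Bernoulli limit because $\BB_{\ASD,v}$ is bounded (its domain $(\Acal/\Acal^\circ)_v(k)$ is finite) while $|\ASD|$ is a proper closed subset of $A$, so $A[N]\cap|\ASD|$ is a vanishingly small fraction of $A[N]$ as $N\to\infty$.

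For the global identity~\eqref{eqn:hhatsumintbernparts}, I would take $L=K(P)$, sum~\eqref{eqn:lhatexacteqlhatintpluslhatbern} over $w\in M_{K(P)}$, and divide by $[K(P):K]$. By Definition~\ref{definition:globalintbernhts} the right-hand side becomes $\hhat_{\ASD}^\Int(P) + \hhat_{\ASD}^\Bern(P)$, while by Theorem~\ref{theorem:neronfncexist}(h) the left-hand side equals $\hhat_{\ASD}(P) + \kappa_{A,\ASD}$; transposing $\kappa_{A,\ASD}$ yields the asserted formula. There is no serious obstacle anywhere in this argument: the whole thing hinges on the existence and convergence of the three torsion averages that define $\kappa_{\ASD,v}$, $\kappa_{\ASD,v}^\Int$, and $\kappa_{\ASD,v}^\Bern$ — which is granted by Theorem~\ref{theorem:neronfncexist} and Definition~\ref{definition:intandbernlochts} — after which the additivity $\kappa_{\ASD,v} = \kappa_{\ASD,v}^\Int + \kappa_{\ASD,v}^\Bern$, the one mildly substantive step, drops out of the single averaging computation above, and~\eqref{eqn:hhatsumintbernparts} is then purely formal.
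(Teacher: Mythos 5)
Your argument is correct and matches the paper's proof essentially line for line: both reduce the local identity to the constant relation $\kappa_{\ASD,v}=\kappa_{\ASD,v}^\Int+\kappa_{\ASD,v}^\Bern$, establish that relation by averaging the difference of the three local functions over $A[N]$ and invoking the three normalization conditions~\eqref{eqn:limn2gPAn0} and~\eqref{eqn:normalizationIntBern}, and then obtain the global identity by summing over places via Theorem~\ref{theorem:neronfncexist}(h). The one thing you do slightly more carefully than the paper is to flag the domain mismatch (the intersection term is undefined on $|\ASD|$, while~\eqref{eqn:normalizationIntBern} is written as a sum over all of $A[N]$) and to explain why it does not affect the limit; the paper's displayed computation quietly sums over all of $A[N]$, so this is a harmless but welcome clarification rather than a different method.
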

\begin{proof}
Using~\eqref{eqn:limn2gPAn0},~\eqref{eqn:LDvPivDPBBDvP},
\eqref{eqn:deflambdaintandbern}, and \eqref{eqn:normalizationIntBern},
we see that
\begin{align*}
  0 &= 
  \lim_{N\to\infty} \frac{1}{N^{2g}}\sum_{T\in A[N]}
  \Bigl(\lhat_{\ASD,v}(T) - \lhat_{\ASD,v}^\Int(T) - \lhat_{\ASD,v}^\Bern(T)  \Bigr) \\
  &= \lim_{N\to\infty} \frac{1}{N^{2g}}\sum_{T\in A[N]}
  (-\kappa_{\ASD,v}^{\vphantom{\Int}}+\kappa_{\ASD,v}^\Int+\kappa_{\ASD,v}^\Bern) \\
  &=  -\kappa_{\ASD,v}^{\vphantom{\Int}}+\kappa_{\ASD,v}^\Int+\kappa_{\ASD,v}^\Bern.
\end{align*}
Thus~$\kappa_{\ASD,v}^{\vphantom{\Int}}=\kappa_{\ASD,v}^\Int+\kappa_{\ASD,v}^\Bern$, which
gives~\eqref{eqn:lhatexacteqlhatintpluslhatbern}. Then
\begin{align*}
  \hhat_\ASD(P) + \kappa_{A,\ASD}
  &= \frac{1}{[L:K]} \sum_{w\in M_K}   \lhat_{\ASD,w}(P)
  \quad\text{from Theorem~\ref{theorem:neronfncexist}(h),} \\
  &= \frac{1}{[L:K]} \sum_{w\in M_K}  
  \Bigl( \lhat_{\ASD,w}^\Int(P)+\lhat_{\ASD,w}^\Bern(P) \Bigr)
  \quad\text{from \eqref{eqn:lhatexacteqlhatintpluslhatbern},} \\
  &= \hhat_{\ASD}^\Int(P) + \hhat_{\ASD}^\Bern(P)
  \quad\text{from Definition~\ref{definition:globalintbernhts},}
\end{align*}
which proves~\eqref{eqn:hhatsumintbernparts}.
\end{proof}

\begin{remark}
\label{remark:lhatberndefeverywhere}
We note that~$j_v$ and N\'eron's Bernoulli function~$\BB_{\ASD,v}$ are
defined at all points, so the Bernoulli-part of the local height is
well-defined everywhere,
\[
\lhat_{\ASD,v}^\Bern : A(\Kbar) \longrightarrow \RR.
\]
This is in contrast to the intersection-part~$\lhat_{\ASD,v}^\Int$ of the
local height, which is only defined off of the support of the
divisor~$\ASD$, since if~$P\in|\ASD|$, then the local intersection
index~$\left\langle\overline{\ASD}\cdot\overline{P}\right\rangle_{\Acal,v}$
is not defined.
\end{remark}

\section{Local Heights for Completely Split Multiplicative Reduction}
\label{section:localhtformulanonarch}
We continue with our discussion of (local) heights based on the material
in~\cite{hindrynotesonlocalheights}.  For this section, we fix a
non-archimedean place~$v\in{M_K}$ such
that~$\Acal_v^\circ\cong\GG_m^g$ is a split torus. There is then a
$v$-adic uniformization
\[
\GG_m^g(K_v)/\Omega \xrightarrow{\;\;\sim\;\;} A(K_v),
\]
where~$\Omega$ is a (multiplicative) lattice, say spanned by the
columns of\footnote{The $q_{ij}$ may live in a multi-quadratic
  extension of~$K$.}
\[
\Omega
= \operatorname{Multiplicative-Span}
\left(
\begin{array}{c|c|c|c}
  q_{11}^2 & q_{12}^2 & \cdots & q_{1g}^2 \\
  q_{21}^2 & q_{22}^2 & \cdots & q_{2g}^2 \\
  \vdots & \vdots & \ddots & \vdots \\
  q_{g1}^2 & q_{g2}^2 & \cdots & q_{gg}^2 \\
\end{array}
\right).
\]
We define matrices
\[
\bfq = \begin{pmatrix}
  q_{11} & q_{12} & \cdots & q_{1g} \\
  q_{21} & q_{22} & \cdots & q_{2g} \\
  \vdots & \vdots & \ddots & \vdots \\
  q_{g1} & q_{g2} & \cdots & q_{gg} \\
\end{pmatrix}
\;\text{and}\;
Q = \begin{pmatrix}
  v(q_{11}) & v(q_{12}) & \cdots & v(q_{1g}) \\
  v(q_{21}) & v(q_{22}) & \cdots & v(q_{2g}) \\
  \vdots & \vdots & \ddots & \vdots \\
  v(q_{g1}) & v(q_{g2}) & \cdots & v(q_{gg}) \\
\end{pmatrix},
\]
where~$\bfq$ and~~$Q$ are symmetric, and~$Q$ is positive-definite.  In general,
when we apply~$v$ to vectors and matrices with entries in~$K_v$, we
mean the associated vector or matrix obtained by applying~$v$ to the
entries.  So for example, we have~$Q=v(\bfq)$, and
for~$\bfu\in\GG_m^g(K_v)$, we
have~$v(\bfu)=\bigl(v(u_1),\ldots,v(u_g)\bigr)$.

We introduce notation that will make it easier to work with linear
algebra on multiplicative spaces. For  (column)
vectors
\[
\bfu=(u_1,\ldots,u_g)\in\GG_m^g(K_v)
\quad\text{and}\quad
\bfm=(m_1,\ldots,m_g)\in\ZZ^m,
\]
we define\footnote{The intuition is that ${}^t\bfm\star\bfu$ is
  $\exp({}^t\bfm\log\bfu)$.}
\[
  {}^t\bfm\star\bfu = \prod_{i=1}^g u_i^{m_i}.
\]
Similarly, for the multiplicative period matrix~$\bfq$
and integer vectors $\bfm,\bfn\in\ZZ^g$, we define
\[
  {}^t\bfm\star \bfq\star \bfn = \prod_{i,j=1}^g q_{ij}^{m_in_j}.
\]
In particular, we note that
\[
v({}^t\bfm\star \bfq\star \bfn) = {}^t\bfm Q \bfn = \sum_{i,j=1}^g m_in_jv(q_{ij})
\]
is the value of the bilinear form associated to the positive-definite
matrix~$Q$.

Just as in the classical case over~$\CC$, the change-of-basis formula
for the multiplicative period matrix~$\bfq$ may be described using
the \emph{symplectic group}
\begin{multline*}
\Symp_{2g}(\ZZ) = \left\{
\begin{pmatrix} A&B\\ C&D\\ \end{pmatrix} \in \Mat_{2g\times2g}(\ZZ):
\right. \\
\left.
{\vrule height15pt depth0pt width0pt}^t\!\!
\begin{pmatrix} A&B\\ C&D\\ \end{pmatrix}
\begin{pmatrix} 0&I\\ -I&0\\ \end{pmatrix}
\begin{pmatrix} A&B\\ C&D\\ \end{pmatrix}
=
\begin{pmatrix} 0&I\\ -I&0\\ \end{pmatrix}
\right\}.
\end{multline*}
For our purposes, it suffices to describe the action of~$\Symp_{2g}(\ZZ)$
on the period valuation matrix~$Q$. It is given by the formula
\begin{equation}
  \label{eqn:symp2gaction}
  \begin{pmatrix} A&B\\ C&D\\ \end{pmatrix}\star Q = (AQ+B)(CQ+D)^{-1}.
\end{equation}

The following normalization lemma for the $2$-dimensional case will be
used later.

\begin{lemma}
\label{lemma:quadformwbpositive}
Let~$Q$ be a positive definite symmetric $2$-by-$2$ matrix.  Then
the~$\Symp_4(\ZZ)$ equivalence class of~$Q$ via the
action~\eqref{eqn:symp2gaction} contains a matrix
\[
\begin{pmatrix} a & b \\ b & c \\ \end{pmatrix} \in \Symp_4(\ZZ)\star Q
\]
satisfying
\begin{equation}
  \label{eqn:acgeb20le12blealec}
  ac > b^2 \quad\text{and}\quad 0\le 2b \le a \le c.
\end{equation}
We will say that a matrix~$\SmallMatrix{a&b\\b&c\\}$
satisfying~\eqref{eqn:acgeb20le12blealec} is \emph{normalized}.
\end{lemma}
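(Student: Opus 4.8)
The plan is to reduce the statement to the classical Lagrange--Gauss reduction theory of positive definite binary quadratic forms, by exploiting the block embedding of~$\GL_2(\ZZ)$ into~$\Symp_4(\ZZ)$.

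First I would record the relevant sub-action. For~$U\in\GL_2(\ZZ)$ the matrix~$\SmallMatrix{U&0\\0&{}^tU^{-1}}$ is integral (since~$\det U=\pm1$), and a one-line check of the defining relation shows that it lies in~$\Symp_4(\ZZ)$; moreover, taking~$A=U$,~$B=C=0$,~$D={}^tU^{-1}$ in~\eqref{eqn:symp2gaction} gives
\[
\SmallMatrix{U&0\\0&{}^tU^{-1}}\star Q=(UQ)({}^tU^{-1})^{-1}=UQ\,{}^tU.
\]
Hence it suffices to produce a~$U\in\GL_2(\ZZ)$ for which~$UQ\,{}^tU$ is normalized, i.e.\ to run reduction theory for the real symmetric form attached to~$Q$.

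Next I would carry out the reduction in the usual way. Choose a primitive vector~$\bfu_1\in\ZZ^2$ minimizing~${}^t\bfu\,Q\,\bfu$ over~$\ZZ^2\setminus\{\bfzero\}$; the minimum is attained because~$\{\bfu\in\ZZ^2:{}^t\bfu\,Q\,\bfu\le T\}$ is finite for every~$T$, and it is attained at a primitive vector since~${}^t(d\bfu)Q(d\bfu)=d^2\,{}^t\bfu\,Q\,\bfu$. Extend~$\bfu_1$ to a~$\ZZ$-basis~$\{\bfu_1,\bfu_2\}$ of~$\ZZ^2$; replacing~$\bfu_2$ by~$\bfu_2+n\bfu_1$ for a suitable~$n\in\ZZ$ we may assume~$2\bigl|{}^t\bfu_1 Q\bfu_2\bigr|\le {}^t\bfu_1 Q\bfu_1$, and then, replacing~$\bfu_2$ by~$-\bfu_2$ if necessary, that~${}^t\bfu_1 Q\bfu_2\ge0$; both replacements keep~$\{\bfu_1,\bfu_2\}$ a~$\ZZ$-basis. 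Let~$U\in\GL_2(\ZZ)$ be the matrix whose rows are~$\bfu_1,\bfu_2$, so that
\[
UQ\,{}^tU=\begin{pmatrix}a&b\\b&c\end{pmatrix},\qquad
a={}^t\bfu_1 Q\bfu_1,\quad b={}^t\bfu_1 Q\bfu_2,\quad c={}^t\bfu_2 Q\bfu_2.
\]
Then~$0\le 2b\le a$ by construction; we have~$a\le c$ because~$\bfu_2$, being part of a~$\ZZ$-basis, is primitive and~$a$ is the minimum of~${}^t\bfu\,Q\,\bfu$ over primitive~$\bfu$; and~$ac-b^2=(\det U)^2\det Q>0$ since congruence by an invertible matrix preserves positive-definiteness, so~$ac>b^2$. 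This is precisely~\eqref{eqn:acgeb20le12blealec}.

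I do not anticipate a genuine obstacle here: the mathematical content is entirely classical reduction theory. The only points requiring a little care are (i) checking that the~$\GL_2(\ZZ)$-block really lands in~$\Symp_4(\ZZ)$ and acts on~$Q$ by the congruence~$Q\mapsto UQ\,{}^tU$ under~\eqref{eqn:symp2gaction}, and (ii) arranging the \emph{sign} condition~$2b\ge0$ rather than merely~$2|b|\le a$, which is what the final reflection~$\bfu_2\mapsto-\bfu_2$ is for, since it negates~$b$ while leaving~$a$ and~$c$ unchanged.
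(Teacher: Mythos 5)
Your proof is correct and follows essentially the same route as the paper: block-embed $\GL_2(\ZZ)$ into $\Symp_4(\ZZ)$ via $U\mapsto\SmallMatrix{U&0\\0&{}^tU^{-1}}$ and apply Lagrange--Gauss reduction. The only cosmetic difference is that the paper cites reduction theory as known (producing an $\SL_2(\ZZ)$ matrix and then fixing the sign of $b$ with a separate $\Symp_4$ element $\diag(-1,1,-1,1)$, which is just $\diag(-1,1)\in\GL_2(\ZZ)$ in the same block embedding), whereas you spell the reduction out from scratch and absorb the sign flip into the $\GL_2(\ZZ)$ change of basis by negating $\bfu_2$.
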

\begin{proof}
Standard reduction theory of positive definite binary quadratic forms
(Gaussian reduction) tells us that there is a matrix~$A\in\SL_2(\ZZ)$
such that
\[
  A Q \,{}^t\!A = \begin{pmatrix} a & b \\ b & c \\ \end{pmatrix}
  \quad\text{with}\quad
  0\le |2b| \le a \le c.
\]
We note that
\[
A Q \,{}^t\!A
=
Q \star \begin{pmatrix} A & 0\\ 0 & {}^t\!A^{-1}\\ \end{pmatrix},
\quad\text{where}\quad
\begin{pmatrix} A & 0\\ 0 & {}^t\!A^{-1}\\ \end{pmatrix} \in \Symp_4(\ZZ).
\]
This completes the proof if~$b\ge0$. And if~$b<0$, then we can change
the sign of~$b$ using the following element of~$\Symp_4(\ZZ)$:
\[
\begin{pmatrix}
  -1 & 0 & 0 & 0 \\
  0 & 1 & 0 & 0 \\
  0 & 0 & -1 & 0 \\
  0 & 0 & 0 & 1\\
\end{pmatrix}
\star
\begin{pmatrix} a&b\\ b&c\\ \end{pmatrix}
=
\begin{pmatrix} a&-b\\ -b&c\\ \end{pmatrix}.
\]
This completes the proof of Lemma~\ref{lemma:quadformwbpositive}.
\end{proof}

\begin{definition}
The \emph{theta function} associated to the half-period matrix~$\bfq$
is the function
\begin{gather*}
\ThetaFunction(\,\cdot\,,\bfq) : \GG_m^g(K_v) \longrightarrow K_v,\\
\ThetaFunction(\bfu,\bfq) = \sum_{\bfm\in\ZZ^g} ({}^t\bfm\star\bfq\star\bfm)({}^t\bfm\star\bfu).
\end{gather*}
\end{definition}

Written out in full, we have
\[
\ThetaFunction(\bfu,\bfq) = \sum_{\bfm\in\ZZ^g}  \prod_{i,j=1}^g q_{ij}^{m_im_j} \cdot \prod_{k=1}^g u_k^{m_k}.
\]
The positive-definiteness of~$Q=v(\bfq)$ ensures that the sum converges
for all~$\bfu\in{\Kbar}_v^*$.

We next compute the transformation formula for~$\ThetaFunction$ when~$\bfu$ is
translated by an element of~$\Omega$. We observe that an element
of~$\Omega$ is a product of powers of the columns of the matrix whose
entries are~$q_{ij}^2$, so they are elements of~$\GG_m^g(K_v)$ of the
form~$\bfq\star2\bfn$ with~$\bfn\in\ZZ^g$.

\begin{proposition}
Let~$\bfn\in\ZZ^g$ and~$\bfu\in\GG_m^g(K_v)$.
\begin{parts}
\vspace{2\jot}
\Part{(a)}
$\displaystyle
\ThetaFunction\bigl(\bfu\cdot (\bfq\star2\bfn),\bfq\bigr)
=
({}^t\bfn\star\bfq\star\bfn)^{-1} ({}^t\bfn\star\bfu)^{-1} \ThetaFunction(\bfu,\bfq).
$
\vspace{2\jot}
\Part{(b)}
$\displaystyle
v\Bigl(\ThetaFunction\bigl(\bfu\cdot (\bfq\star2\bfn),\bfq\bigr)\Bigr)
=
v\Bigl(\ThetaFunction(\bfu,\bfq)\Bigr)
- {}^t\bfn{Q}\bfn-{}^t\bfn{v(\bfu)}.
$
\end{parts}
\end{proposition}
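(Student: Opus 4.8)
The plan is to imitate the classical ``completion of the square'' computation that establishes the quasi-periodicity of Riemann's theta function, transcribed into the present multiplicative, non-archimedean setting; part~(b) will then follow at once from part~(a) by applying the valuation~$v$. For part~(a) I would begin by unwinding the definition of~$\ThetaFunction$ at the translated argument. Write $\bfw=\bfq\star 2\bfn\in\GG_m^g(K_v)$. Since $\star$ is multiplicative in its first slot and replacing $\bfn$ by $2\bfn$ in ${}^t\bfm\star\bfq\star\bfn=\prod_{i,j}q_{ij}^{m_in_j}$ simply squares that product, one has
\[
{}^t\bfm\star(\bfu\cdot\bfw) = ({}^t\bfm\star\bfu)\cdot({}^t\bfm\star\bfq\star\bfn)^2 ,
\]
so that substituting into the series defining $\ThetaFunction(\bfu\cdot\bfw,\bfq)$ yields
\[
\ThetaFunction\bigl(\bfu\cdot(\bfq\star 2\bfn),\bfq\bigr)
= \sum_{\bfm\in\ZZ^g}({}^t\bfm\star\bfq\star\bfm)\,({}^t\bfm\star\bfu)\,({}^t\bfm\star\bfq\star\bfn)^2 .
\]

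Next I would complete the square. Expanding the exponents in ${}^t(\bfm+\bfn)\star\bfq\star(\bfm+\bfn)=\prod_{i,j}q_{ij}^{(m_i+n_i)(m_j+n_j)}$ and using the symmetry of~$\bfq$ (so that the two ``mixed'' contributions coincide), one gets
\[
{}^t(\bfm+\bfn)\star\bfq\star(\bfm+\bfn)
= ({}^t\bfm\star\bfq\star\bfm)\,({}^t\bfm\star\bfq\star\bfn)^2\,({}^t\bfn\star\bfq\star\bfn),
\]
and likewise ${}^t\bfm\star\bfu=\bigl({}^t(\bfm+\bfn)\star\bfu\bigr)\cdot({}^t\bfn\star\bfu)^{-1}$. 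Feeding these back into the series, the $\bfm$-independent factors $({}^t\bfn\star\bfq\star\bfn)^{-1}$ and $({}^t\bfn\star\bfu)^{-1}$ come out front, leaving $\sum_{\bfm\in\ZZ^g}\bigl({}^t(\bfm+\bfn)\star\bfq\star(\bfm+\bfn)\bigr)\bigl({}^t(\bfm+\bfn)\star\bfu\bigr)$. Since $\bfm\mapsto\bfm+\bfn$ is a bijection of~$\ZZ^g$, and since positive-definiteness of $Q=v(\bfq)$ makes the general term of the theta series tend $v$-adically to~$0$ (so that the rearrangement is harmless), this remaining sum is exactly $\ThetaFunction(\bfu,\bfq)$, which proves~(a).

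For part~(b) I would simply apply~$v$ to the identity in~(a). As $v$ is a homomorphism from $\Kbar_v^*$ to its value group, it carries the right-hand product to $-v({}^t\bfn\star\bfq\star\bfn)-v({}^t\bfn\star\bfu)+v\bigl(\ThetaFunction(\bfu,\bfq)\bigr)$. Then $v({}^t\bfn\star\bfq\star\bfn)={}^t\bfn Q\bfn$ by the identity $v({}^t\bfm\star\bfq\star\bfn)={}^t\bfm Q\bfn$ recorded earlier, and $v({}^t\bfn\star\bfu)=\sum_i n_i v(u_i)={}^t\bfn\,v(\bfu)$ directly from the definition of~$\star$; substituting gives the stated formula.

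I do not anticipate a genuine obstacle, since the whole argument is formal. The only steps needing care are the $\star$-bookkeeping in the completion of the square --- especially using the symmetry of~$\bfq$ at the right moment so the two cross terms coalesce into the single square $({}^t\bfm\star\bfq\star\bfn)^2$ --- and the routine justification that term-by-term rearrangement of the convergent theta series is legitimate in the $v$-adic topology.
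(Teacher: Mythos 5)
Your proposal is correct and follows essentially the same route as the paper's own proof: unwind the series, use the symmetry of $\bfq$ to complete the square as ${}^t(\bfm+\bfn)\star\bfq\star(\bfm+\bfn)$, reindex $\bfm\mapsto\bfm+\bfn$, and apply $v$ for part~(b) using $v({}^t\bfn\star\bfq\star\bfn)={}^t\bfn Q\bfn$ and $v({}^t\bfn\star\bfu)={}^t\bfn\,v(\bfu)$. The only (minor, welcome) difference is that you explicitly remark on the $v$-adic justification for the rearrangement, which the paper leaves implicit.
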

\begin{proof}
We give the elementary verification in
Appendix~\ref{section:verifyformulas}; see
Proposition~\ref{proposition:qprop1}.
\end{proof}


\begin{proposition}
The function
\begin{gather*}
\Lambda(\,\cdot\,,\bfq) : \GG_m^g(K_v) \longrightarrow \RR,\\
\Lambda(\bfu,\bfq) = v\bigl(\ThetaFunction(\bfu,\bfq)\bigr) + {\dfrac{1}{4}} {}^tv(\bfu) Q^{-1} v(\bfu),
\end{gather*}
is~$\Omega$-invariant, and hence descends to a function
\[
\Lambda(\,\cdot\,,\bfq) : A(K_v)\cong \GG_m^g(K_v)/\Omega \longrightarrow \RR.
\]
\end{proposition}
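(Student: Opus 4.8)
The plan is to verify directly that $\Lambda(\,\cdot\,,\bfq)$ is unchanged under translation by each generator of $\Omega$, and then to note that $\Omega$-invariance is precisely the statement that $\Lambda$ descends to $\GG_m^g(K_v)/\Omega\cong A(K_v)$. Recall that $\Omega$ is the multiplicative span of the vectors $\bfq\star2\bfe_j$, so that a general element of $\Omega$ has the form $\bfq\star2\bfn$ with $\bfn\in\ZZ^g$, and translating $\bfu$ by such an element amounts to replacing $\bfu$ by $\bfu\cdot(\bfq\star2\bfn)$. The first thing I would record is the valuation of the translation vector: its $k$-th coordinate is $\prod_j q_{kj}^{2n_j}$, so applying $v$ coordinatewise gives $v(\bfq\star2\bfn)=2Q\bfn$, whence
\[
v\bigl(\bfu\cdot(\bfq\star2\bfn)\bigr)=v(\bfu)+2Q\bfn.
\]

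Next I would substitute $\bfu\cdot(\bfq\star2\bfn)$ into the definition of $\Lambda$ and handle the two terms separately. For the theta term, the preceding proposition gives
\[
v\Bigl(\ThetaFunction\bigl(\bfu\cdot(\bfq\star2\bfn),\bfq\bigr)\Bigr)
= v\bigl(\ThetaFunction(\bfu,\bfq)\bigr)-{}^t\bfn Q\bfn-{}^t\bfn v(\bfu).
\]
For the quadratic term, I would expand $\tfrac14\,{}^t\!\bigl(v(\bfu)+2Q\bfn\bigr)Q^{-1}\bigl(v(\bfu)+2Q\bfn\bigr)$, using that $Q$, and hence $Q^{-1}$, is symmetric so that ${}^tv(\bfu)Q^{-1}(2Q\bfn)={}^t(2Q\bfn)Q^{-1}v(\bfu)=2\,{}^t\bfn v(\bfu)$ and ${}^t(2Q\bfn)Q^{-1}(2Q\bfn)=4\,{}^t\bfn Q\bfn$; this yields
\[
\tfrac14\,{}^t\!\bigl(v(\bfu)+2Q\bfn\bigr)Q^{-1}\bigl(v(\bfu)+2Q\bfn\bigr)
=\tfrac14\,{}^tv(\bfu)Q^{-1}v(\bfu)+{}^t\bfn v(\bfu)+{}^t\bfn Q\bfn.
\]
Adding the two displayed right-hand sides, the $\pm{}^t\bfn Q\bfn$ and $\pm{}^t\bfn v(\bfu)$ contributions cancel in pairs, leaving exactly $v\bigl(\ThetaFunction(\bfu,\bfq)\bigr)+\tfrac14\,{}^tv(\bfu)Q^{-1}v(\bfu)=\Lambda(\bfu,\bfq)$. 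Thus $\Lambda\bigl(\bfu\cdot(\bfq\star2\bfn),\bfq\bigr)=\Lambda(\bfu,\bfq)$ for every $\bfn\in\ZZ^g$, so $\Lambda(\,\cdot\,,\bfq)$ is $\Omega$-invariant and descends to $A(K_v)\cong\GG_m^g(K_v)/\Omega$.

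I do not expect any genuine obstacle here: everything is forced by the transformation law for $\ThetaFunction$ together with elementary linear algebra, and the role of the corrective term $\tfrac14\,{}^tv(\bfu)Q^{-1}v(\bfu)$ is exactly to absorb the $-{}^t\bfn Q\bfn-{}^t\bfn v(\bfu)$ produced by $v(\ThetaFunction)$. The only point requiring care is the bookkeeping in the quadratic term — checking $v(\bfq\star2\bfn)=2Q\bfn$ and then matching the factors of $Q$ and $Q^{-1}$ against each other via symmetry — and, exactly as for the analogous transformation formula for $\ThetaFunction$, I would relegate the fully written-out verification to the computational appendix.
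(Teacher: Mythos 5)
Your proposal is correct and follows essentially the same route as the paper's own verification in the appendix: both compute $\Lambda\bigl(\bfu\cdot(\bfq\star2\bfn),\bfq\bigr)-\Lambda(\bfu,\bfq)$, substitute the transformation law for $v(\ThetaFunction)$, use $v(\bfq\star2\bfn)=2Q\bfn$, expand the quadratic term via the symmetry ${}^tQ=Q$, and observe the cancellation. Nothing further is needed.
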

\begin{proof}
We give the elementary verification in
Appendix~\ref{section:verifyformulas}; see
Proposition~\ref{proposition:qprop2}.
\end{proof}

\begin{theorem}
\label{theorem:lDlDIlDBkk}
Let~$(A,\ThetaDivisor)/K_v$ be a principally polarized abelian surface
having totally split multiplicative reduction,
where~$\ThetaDivisor\in\Div_K(A)$ is an effective symmetric principal
polarization, and let~$\bfq\subset\GG_m^g(K_v)$ be an associated
multiplicative period matrix.
\begin{parts}
\Part{(a)}
There is a $2$-torsion point~$T_0\in{A[2]}$ so that
\[
\ThetaDivisor=\ThetaDivisorT+T_0
\quad\text{with}\quad
\ThetaDivisorT = \div\bigl(\ThetaFunction(\,\cdot\,,\bfq)\bigr). 
\]
\Part{(b)}
Let
\[
\Pcal : \GG_m^g(\Kbar_v) \longrightarrow A(\Kbar_v)
\]
denote the $v$-adic analytic uniformization of~$A$. Then
there is a $\kappa'_v\in\QQ$ so that for all $\bfu\in\GG_m^g(K_v)$,
\[
\lhat_{\ThetaDivisorT,v}\bigl(\Pcal(\bfu)\bigr) = v\bigl(\ThetaFunction(\bfu,\bfq)\bigr) +
{\dfrac{1}{4}} {}^tv(\bfu) Q^{-1} v(\bfu) - \kappa'_v.
\]
\Part{(c)}
Write
\begin{align*}
  \lhat_{\ThetaDivisorT,v}(P)
  &= \left\langle\overline{\ThetaDivisorT}\cdot\overline{P}\right\rangle_{\Acal,v}
  + \BB_{\ThetaDivisorT,v}\bigl(j_v(P)\bigr) - \kappa_v \\
  &= \lhat_{\ThetaDivisorT,v}^\Int(P) + \lhat_{\ThetaDivisorT,v}^\Bern(P)
\end{align*}
as in~\eqref{eqn:LDvPivDPBBDvP} and~\eqref{eqn:lhatexacteqlhatintpluslhatbern}. Then
\begin{align}
  \lhat_{\ThetaDivisor,v}^\Int(P+T_0) 
  &= \max_{\substack{\bfu\in\GG_m^g(\Kbar_v)\\ \Pcal(\bfu)=P\\}}
  v\bigl( \ThetaFunction(\bfu,\bfq) \bigr)  -  \kappa_v^\Int, \notag\\
  \lhat_{\ThetaDivisor,v}^\Bern(P+T_0) 
  &=\min_{\substack{\bfu\in\GG_m^g(\Kbar_v)\\ \Pcal(\bfu)=P\\}}
  {\dfrac{1}{4}} {}^tv(\bfu) Q^{-1} v(\bfu) - \kappa_v^\Bern.
  \label{eqn:lBThvPmin}
\end{align}
\end{parts}
\end{theorem}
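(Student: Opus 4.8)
The plan is to prove the three parts in order, bootstrapping from the theta‑function transformation formulas proved just above. \emph{Part (a).} First I would note that $\ThetaFunction(\,\cdot\,,\bfq)$ is invariant under $\bfu\mapsto\bfu^{-1}$: substituting $\bfu^{-1}$ into the defining series and re‑indexing $\bfm\mapsto-\bfm$ changes nothing, because ${}^t(-\bfm)\star\bfq\star(-\bfm)={}^t\bfm\star\bfq\star\bfm$. Since the automorphy factor $({}^t\bfn\star\bfq\star\bfn)^{-1}({}^t\bfn\star\bfu)^{-1}$ in the transformation formula is a monomial with neither zeros nor poles on $\GG_m^g$, the divisor $\div\bigl(\ThetaFunction(\,\cdot\,,\bfq)\bigr)$ is $\Omega$‑periodic and descends to an effective symmetric divisor $\ThetaDivisorT$ on $A$; the non‑archimedean uniformization theory of Mumford and Raynaud identifies the line bundle cut out by these automorphy factors together with the positive‑definite form $Q$ as an effective symmetric principal polarization. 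By the choice of $\bfq$ the given divisor $\ThetaDivisor$ lies in the same N\'eron--Severi class as $\ThetaDivisorT$, so $\Ocal_A(\ThetaDivisor-\ThetaDivisorT)\in\Pic^0(A)$ is realized by translation by a unique point $T_0$; because a principal polarization has a one‑dimensional space of global sections, this upgrades to the equality of divisors $\ThetaDivisor=\ThetaDivisorT+T_0$. Applying $[-1]^*$ to both sides and using symmetry of $\ThetaDivisor$ and $\ThetaDivisorT$ gives $\ThetaDivisorT+2T_0=\ThetaDivisorT$, hence $2T_0=O$ since the polarization map $A\to\widehat A$ is an isomorphism; thus $T_0\in A[2]$.

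\emph{Part (b).} The $\Omega$‑invariance established just above shows that $\Lambda(\bfu,\bfq)=v\bigl(\ThetaFunction(\bfu,\bfq)\bigr)+\tfrac14\,{}^tv(\bfu)Q^{-1}v(\bfu)$ descends to a function on $A(K_v)\setminus|\ThetaDivisorT|$, and I would check that it has the characterizing properties of a N\'eron local height function for $\ThetaDivisorT$ up to an additive constant: it is continuous (indeed locally constant) away from $|\ThetaDivisorT|$; near a point of $|\ThetaDivisorT|$ the theta function is a local defining equation up to a nowhere‑vanishing monomial while the quadratic term is continuous everywhere, so $\Lambda$ minus the valuation of a local equation extends continuously across $|\ThetaDivisorT|$; and the correct behavior under isogenies is inherited from the theta transformation law. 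The uniqueness clause of Theorem~\ref{theorem:neronfncexist} then yields $\lhat_{\ThetaDivisorT,v}=\Lambda-\kappa'_v$ for a unique $\kappa'_v$, which is rational because $v\bigl(\ThetaFunction(\bfu,\bfq)\bigr)\in\ZZ$ and $\tfrac14\,{}^tv(\bfu)Q^{-1}v(\bfu)\in\QQ$ for $\bfu\in\GG_m^g(K_v)$, whereas $\lhat_{\ThetaDivisorT,v}$ takes values in $\QQ$ by Theorem~\ref{theorem:neronfncexist}(g).

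\emph{Part (c).} Fix $P\in A(\Kbar_v)$ and a lift $\bfu_0$ with $\Pcal(\bfu_0)=P$; every lift is $\bfu_0\cdot(\bfq\star2\bfn)$ with $\bfn\in\ZZ^g$, and the transformation formula gives $v\bigl(\ThetaFunction(\bfu_0\cdot(\bfq\star2\bfn),\bfq)\bigr)=v\bigl(\ThetaFunction(\bfu_0,\bfq)\bigr)-{}^t\bfn Q\bfn-{}^t\bfn v(\bfu_0)$, a concave quadratic in $\bfn$, while $\tfrac14\,{}^tv\bigl(\bfu_0\cdot(\bfq\star2\bfn)\bigr)Q^{-1}v\bigl(\bfu_0\cdot(\bfq\star2\bfn)\bigr)=\tfrac14\,{}^tv(\bfu_0)Q^{-1}v(\bfu_0)+{}^t\bfn Q\bfn+{}^t\bfn v(\bfu_0)$, its reflection; consequently both extrema exist and are attained at the same $\bfn$, so that $\max_{\Pcal(\bfu)=P}v\bigl(\ThetaFunction(\bfu,\bfq)\bigr)+\min_{\Pcal(\bfu)=P}\tfrac14\,{}^tv(\bfu)Q^{-1}v(\bfu)=\Lambda(\bfu_0,\bfq)$. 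Together with part (b) this writes $\lhat_{\ThetaDivisorT,v}(P)$ as the sum of the two optima minus $\kappa'_v$; moreover the minimum depends only on the class of $v(\bfu_0)$ modulo $v(\Omega)=2Q\ZZ^g$, that is, only on $j_v(P)\in(\Acal/\Acal^\circ)_v(k)$, which is precisely the shape of N\'eron's Bernoulli term. It then remains to match $\max_{\Pcal(\bfu)=P}v\bigl(\ThetaFunction(\bfu,\bfq)\bigr)$ with the intersection index $\langle\overline{\ThetaDivisorT}\cdot\overline{P}\rangle_{\Acal,v}$: on the Mumford model of the N\'eron model of $A$, the chart indexed by $\bfn$ cuts out $\overline{\ThetaDivisorT}$ by the theta function rescaled by the automorphy factor of valuation ${}^t\bfn Q\bfn+{}^t\bfn v(\bfu_0)$, so restricting to the section $\overline{P}$ in the chart meeting the identity component yields exactly that maximum. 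With this in hand the decompositions $\lhat_{\ThetaDivisorT,v}=\lhat^\Int_{\ThetaDivisorT,v}+\lhat^\Bern_{\ThetaDivisorT,v}=\langle\overline{\ThetaDivisorT}\cdot\overline{P}\rangle_{\Acal,v}+\BB_{\ThetaDivisorT,v}\bigl(j_v(P)\bigr)-\kappa_v$ match term by term; replacing $\ThetaDivisorT$ by $\ThetaDivisor=\ThetaDivisorT+T_0$ and $P$ by $P+T_0$ only introduces the constant coming from translating a N\'eron function by the torsion point $T_0$, which is absorbed into $\kappa_v^\Int$, and finally $\kappa_v^\Int$ and $\kappa_v^\Bern$ are pinned down by the normalization in Definition~\ref{definition:intandbernlochts}. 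I expect the last identification — reading the intersection number off the explicit Mumford/Raynaud model and tracking which chart computes the intersection along $\overline{P}$ — to be the main obstacle; everything else is formal manipulation of the theta series and its automorphy factors.
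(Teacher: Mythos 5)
The paper states Theorem~\ref{theorem:lDlDIlDBkk} without proof; it is a distillation of non-archimedean theta-function theory from the cited references (Hindry's notes, Werner). Your outline follows essentially that standard route, and the arithmetic manipulations you do carry out are correct: the series for $\ThetaFunction$ is visibly $[-1]^*$-invariant; the automorphy factor has neither zeros nor poles so $\div\ThetaFunction$ descends; the passage $\ThetaDivisor=\ThetaDivisorT+T_0$ with $T_0\in A[2]$ follows from symmetry and injectivity of the polarization map; and in part~(c) your observation that $f(\bfn)=v(\ThetaFunction(\bfu_0\cdot(\bfq\star2\bfn),\bfq))$ and $g(\bfn)=\tfrac14{}^tv(\bfu_0\cdot(\bfq\star2\bfn))Q^{-1}v(\bfu_0\cdot(\bfq\star2\bfn))$ sum to a constant in $\bfn$ (so $\max f$ and $\min g$ are attained at the same $\bfn$, and together give $\Lambda(\bfu_0,\bfq)$) is exactly right, as is the remark that $\min g$ depends only on $v(\bfu_0)\bmod 2Q\ZZ^g$, i.e.\ only on $j_v(P)$.

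Two steps need to be made precise. First, in part~(b) you invoke the uniqueness clause of Theorem~\ref{theorem:neronfncexist} after noting $\Lambda$ is a Weil function for $\ThetaDivisorT$ (continuous, locally constant off $|\ThetaDivisorT|$, with the correct principal-part behavior). But N\'eron's uniqueness is among the full families satisfying properties (a)--(e); for a single ample symmetric divisor, the operative characterization is that $\Lambda$ be a continuous Weil function whose duplication defect $\Lambda\circ[2]-4\Lambda$ equals $v\circ f$ for a rational function $f$ with $\div(f)=[2]^*\ThetaDivisorT-4\ThetaDivisorT$. This quadraticity is exactly what you must check via the theta Riemann/duplication relations; the phrase ``correct behavior under isogenies is inherited from the theta transformation law'' gestures at this but the transformation law you derived is only the $\Omega$-automorphy, which is a different statement. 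Second, the identification
\[
\max_{\Pcal(\bfu)=P}v\bigl(\ThetaFunction(\bfu,\bfq)\bigr)
=\bigl\langle\overline{\ThetaDivisorT}\cdot\overline{P}\bigr\rangle_{\Acal,v}+\text{const}
\]
is, as you say, the crux. Your Mumford-model sketch --- that on the chart of the formal model indexed by $\bfn$ the local equation for $\overline{\ThetaDivisorT}$ is the theta series rescaled by the automorphy factor of valuation ${}^t\bfn Q\bfn+{}^t\bfn v(\bfu_0)$ --- is the right idea, but it needs to be tied to the fact that $\overline{P}$ reduces into a specific irreducible component (the one determined by the minimizing $\bfn^*$), that the resulting valuation is the intersection index with the closure of $\ThetaDivisorT$ rather than some other vertical or horizontal piece, and that the additive constant is absorbed into $\kappa_v^\Int$. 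Spelling this out (or quoting Werner's results, which the paper defers to) closes the argument.
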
  

In the next section we are going to give an explicit formula for the
Fourier series of the Bernoulli local height~$\lhat_{\ThetaDivisor,v}^\Bern$
when~$\dim(A)=2$.  For notational reasons, it is easier to
renormalize the lattice and work with the standard
torus~$\RR^g/\ZZ^g$. Roughly speaking, we want to write
$\bfu\in\GG_m^g(\Kbar_v)$ as a (multiplicative) linear combination of
the lattice vectors.  But since we  only require the valuations, we define
a function
\begin{equation}
  \label{eqn:xuQinvvu}
  \bfx : \GG_m^g(\Kbar_v) \longrightarrow \RR^g, \quad
  \bfx(\bfu) = Q^{-1}v(\bfu).
\end{equation}
For~$P\in{A(\Kbar_v)}$, we write~$P=\Pcal(\bfu_P)$ for some choice
of~$\bfu_P\in\GG_m^g(\Kbar_v)$, and then we set
\[
\bfx_P = \bfx(\bfu_P) \in\RR^g.
\]
We note that~$\bfx_P$ is well defined in~$\RR^g/\ZZ^g$.

We associate to the period valuation matrix~$Q$ the ``periodic
quadratic form''
\begin{equation}
  \label{eqn:LQRgR}
  L_Q : \RR^g \longrightarrow \RR,\quad
  L_Q(\bfx_0) = \min_{\substack{\bfx\in\RR^g\\ \bfx\equiv\bfx_0\pmodintext{\ZZ^g}\\}} {}^t\bfx Q \bfx,
\end{equation}
and we write its associated Fourier series as
\[
L_Q(\bfx) = \sum_{\bfn\in\ZZ^g} \FC_Q(\bfn)e^{2\pi i {}^t\bfn\bfx}.
\]
Then
\begin{align}
  \label{equaton:LQhat0kappavB}
  {\dfrac{1}{4}} \FC_Q(\bfzero)
  &= \int_{\RR^g/\ZZ^g} {\dfrac{1}{4}} L_Q(\bfx)\,d\bfx \notag\\
  &= \lim_{\substack{N\to\infty\\\text{$N$ even}\\}} N^{-g} \sum_{\bft\in N^{-1}\ZZ^g/\ZZ^g} {\dfrac{1}{4}} L_Q(\bft) \notag\\
  &= \lim_{\substack{N\to\infty\\\text{$N$ even}\\}} N^{-2g} \sum_{T\in A[N]}
  \Bigl( \lhat_{\ThetaDivisor,v}^\Bern(T) + \kappa_v^\BB \Bigr)
  \quad\text{from \eqref{eqn:lBThvPmin},}\notag\\
  &= \kappa_v^\BB
  \quad\text{from \eqref{eqn:normalizationIntBern}.}
\end{align}
Then the Bernoulli local height is given by the formula
\begin{align*}
\lhat_{\ThetaDivisor,v}^\Bern(P+T_0)
&= \min_{\Pcal(\bfu)=P}   {\dfrac{1}{4}} {}^tv(\bfu) Q^{-1} v(\bfu) - \kappa_v^\BB
&&\text{from \eqref{eqn:lBThvPmin},} \\  
&= \min_{\Pcal(\bfu)=P}   {\dfrac{1}{4}} {}^t\bfx(u) Q \bfx(u) - \kappa_v^\BB
&&\text{from \eqref{eqn:xuQinvvu} and ${}^tQ=Q$,} \\
&= {\dfrac{1}{4}} L_Q(\bfx_P) - \kappa_v^\BB
&&\text{from \eqref{eqn:LQRgR},} \\
&= {\dfrac{1}{4}} L_Q(\bfx_P) - {\dfrac{1}{4}} \FC_Q(\bfzero)
&&\text{from \eqref{equaton:LQhat0kappavB}.}
\end{align*}

We record this result as a proposition.

\begin{proposition}
\label{proposition:lhatDvBLQxminusLhatQ}
With notation as in this and the previous section,
\[
\lhat_{\ThetaDivisor,v}^\Bern(P+T_0) = {\dfrac{1}{4}} L_Q(\bfx_P) - {\dfrac{1}{4}} \FC_Q(\bfzero)
\quad\text{for all $P\in A(\Kbar_v)$.}
\]
\end{proposition}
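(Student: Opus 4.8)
The plan is to assemble the formula directly from ingredients already established in this and the preceding section, so that the proof amounts to recording the chain of four equalities displayed immediately before the statement. The four inputs are: the explicit description \eqref{eqn:lBThvPmin} of $\lhat_{\ThetaDivisor,v}^\Bern$ as a minimum of a positive definite quadratic form over the fiber of the $v$-adic uniformization; the change of variables $\bfx(\bfu)=Q^{-1}v(\bfu)$ of \eqref{eqn:xuQinvvu}; the definition \eqref{eqn:LQRgR} of the periodic quadratic form $L_Q$; and the evaluation \eqref{equaton:LQhat0kappavB} of its zeroth Fourier coefficient $\FC_Q(\bfzero)$.

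Concretely, I would proceed in four steps. First, apply Theorem~\ref{theorem:lDlDIlDBkk}(c), namely \eqref{eqn:lBThvPmin}, to write
\[
\lhat_{\ThetaDivisor,v}^\Bern(P+T_0) = \min_{\substack{\bfu\in\GG_m^g(\Kbar_v)\\ \Pcal(\bfu)=P}} {\dfrac{1}{4}}\,{}^tv(\bfu)\,Q^{-1}\,v(\bfu) \;-\; \kappa_v^{\BB}.
\]
Second, substitute $v(\bfu)=Q\,\bfx(\bfu)$ coming from \eqref{eqn:xuQinvvu} and use the symmetry ${}^tQ=Q$ to replace the quantity being minimized by ${\dfrac{1}{4}}\,{}^t\bfx(\bfu)\,Q\,\bfx(\bfu)$. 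Third, observe that as $\bfu$ runs over the fiber $\Pcal^{-1}(P)$ the vector $\bfx(\bfu)$ runs over exactly one complete coset of $\bfx_P$ modulo the lattice appearing in \eqref{eqn:LQRgR} --- this is where one uses that the uniformizing lattice $\Omega$ is the multiplicative span of the columns of the matrix with entries $q_{ij}^2$ and that $\bfx_P$ is well defined in $\RR^g/\ZZ^g$ --- so that the minimum is precisely $L_Q(\bfx_P)$, whence $\lhat_{\ThetaDivisor,v}^\Bern(P+T_0)={\dfrac{1}{4}}L_Q(\bfx_P)-\kappa_v^{\BB}$. Fourth, replace $\kappa_v^{\BB}$ by ${\dfrac{1}{4}}\FC_Q(\bfzero)$ via \eqref{equaton:LQhat0kappavB}, which already performs this identification using the normalization \eqref{eqn:normalizationIntBern}.

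The only point that is more than a mechanical substitution is the third step: one must check that the set $\{\bfx(\bfu):\Pcal(\bfu)=P\}$ produced by the $v$-adic uniformization coincides, as a coset, with the translate of $\bfx_P$ by the lattice over which the minimum defining $L_Q$ is taken. This is purely a matter of tracking how $\Omega\subset\GG_m^g(\Kbar_v)$ transforms under $\bfu\mapsto Q^{-1}v(\bfu)$ and matching it against the periodicity built into \eqref{eqn:LQRgR}; since the relevant well-definedness statements are already in place, I expect no genuine obstacle.
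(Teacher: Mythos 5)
Your four steps are the same four-line chain of equalities that the paper displays immediately before the statement (with the same four citations), so the approach matches exactly; for steps one, two, and four there is nothing to add.

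Step three, however, is not the non-issue you suggest. You write that matching $\bigl\{\bfx(\bfu):\Pcal(\bfu)=P\bigr\}$ against the periodicity of $L_Q$ is purely a matter of tracking how $\Omega$ transforms under $\bfu\mapsto Q^{-1}v(\bfu)$ and that you expect no genuine obstacle; but if you actually do the tracking with the stated conventions you run into a factor-of-two mismatch. The lattice $\Omega$ is the multiplicative span of the columns of the matrix with entries $q_{ij}^{2}$, so a generic lattice element is $\bfq\star 2\bfn$ with $\bfn\in\ZZ^{g}$, and since $Q_{ij}=v(q_{ij})$ one has $v(\bfq\star 2\bfn)=2Q\bfn$ and hence $\bfx(\bfq\star 2\bfn)=Q^{-1}(2Q\bfn)=2\bfn$. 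Thus $\bfx$ carries the fiber $\Pcal^{-1}(P)=\bfu_P\cdot\Omega$ onto $\bfx_P+2\ZZ^{g}$, not $\bfx_P+\ZZ^{g}$; in particular $\bfx_P$ is a priori well-defined only modulo $2\ZZ^{g}$. The minimum coming out of steps one and two is therefore a minimum over the sublattice $2\ZZ^{g}$, whereas $L_Q(\bfx_P)$ in \eqref{eqn:LQRgR} minimizes over all of $\ZZ^{g}$, and these need not agree. Already for $g=1$ with $Q=1$ and $v(u_P)=1$, the minimum of $\tfrac14(1+2k)^2$ over $k\in\ZZ$ is $\tfrac14$, while $\tfrac14 L_Q(1)=0$. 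This is a normalization wrinkle that you inherit from the source (it disappears, for instance, if one instead sets $\bfx(\bfu)=\tfrac12 Q^{-1}v(\bfu)$, at which point the factors of $\tfrac14$ in the statement must be dropped), but it propagates into the Fourier computation of Section~\ref{section:hexfouriercalcquadform}, so you should resolve it explicitly rather than take step three on faith.
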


\begin{figure}[p]
\begin{picture}(300,300)(-150,-175)
  \thicklines
  \color{orange}   \polygon*(0,100)(100,100)(74.58,66.10)
  \color{black} \polygon(0,100)(100,100)(74.58,66.10)
  \color{red}   \polygon*(100,0)(100,100)(74.58,66.10)
  \color{black} \polygon(100,0)(100,100)(74.58,66.10)
  \color{blue}   \polygon*(0,-100)(-100,-100)(-66.10,-74.58)
  \color{black} \polygon(0,-100)(-100,-100)(-66.10,-74.58)
  \color{green}   \polygon*(-100,0)(-100,-100)(-66.10,-74.58)
  \color{black} \polygon(-100,0)(-100,-100)(-66.10,-74.58)
  \color{yellow}  \polygon*(-100,0)(-100,100)(0,100)(74.58,66.1)(100,0)(100,-100)(0,-100)(-66.10,-74.58)
  \color{black}
  \thinlines
  \put(0,0){\vector(1,0){120}}
  \put(0,0){\vector(-1,0){120}}
  \put(0,0){\vector(0,1){120}}
  \put(0,0){\vector(0,-1){120}}
  \thicklines
  \put(-100,100){\line(1,0){200}}
  \put(-100,-100){\line(1,0){200}}
  \put(100,-100){\line(0,1){200}}
  \put(-100,-100){\line(0,1){200}}
  \put(70,82){\makebox(0,0)[b]{\textbf{I}}}
  \put(90,68){\makebox(0,0)[t]{\textbf{II}}}
  \put(-65,-82){\makebox(0,0)[t]{\color{white}\textbf{III}}}
  \put(-85,-70){\makebox(0,0)[t]{\textbf{IV}}}
  \put(72,64){\makebox(0,0)[tr]{$\boldsymbol Q_{12}$}}
  \put(74.58,66.10){\circle*{5}}
  \put(-64,-72){\makebox(0,0)[bl]{$\boldsymbol Q_{34}$}}
  \put(-66.10,-74.58){\circle*{5}}  
  
  \put(150,-110){\makebox(0,0)[rt]{$\begin{aligned}
        Q_{12}  &= \left( \tfrac{c(a-b)}{2(ac-b^2)}, \tfrac{a(c-b)}{2(ac-b^2)}  \right) \\
        Q_{34} &= \left( -\tfrac{a(c-b)}{2(ac-b^2)}, -\tfrac{c(a-b)}{2(ac-b^2)}  \right) \\
      \end{aligned}$}}
\end{picture}
\begin{picture}(300,250)(-150,-150)
  \thicklines
  \color{orange}   \polygon*(0,-100)(100,-100)(74.58,-133.90)
  \color{black} \polygon(0,-100)(100,-100)(74.58,-133.90)
  \color{red}   \polygon*(-100,0)(-100,100)(-125.42,66.10)
  \color{black} \polygon(-100,0)(-100,100)(-125.42,66.10)
  \color{blue}   \polygon*(0,100)(-100,100)(-66.10,133.90)
  \color{black} \polygon(0,100)(-100,100)(-66.10,133.90)
  \color{green}   \polygon*(100,0)(100,-100)(125.42,-66.10)
  \color{black} \polygon(100,0)(100,-100)(125.42,-66.10)
  \color{yellow}  \polygon*(-100,0)(-100,100)(0,100)(74.58,66.1)(100,0)(100,-100)(0,-100)(-66.10,-74.58)
  \color{black}
  \thinlines
  \put(0,0){\vector(1,0){120}}
  \put(0,0){\vector(-1,0){120}}
  \put(0,0){\vector(0,1){120}}
  \put(0,0){\vector(0,-1){120}}
  \thicklines
  \put(-100,100){\line(1,0){200}}
  \put(-100,-100){\line(1,0){200}}
  \put(100,-100){\line(0,1){200}}
  \put(-100,-100){\line(0,1){200}}
\end{picture}
\caption{The hexagon where $F=L$, and the associated decomposition of
  the unit square as an octagon and four triangles}
\label{figure:hexagonandsquare}
\end{figure}
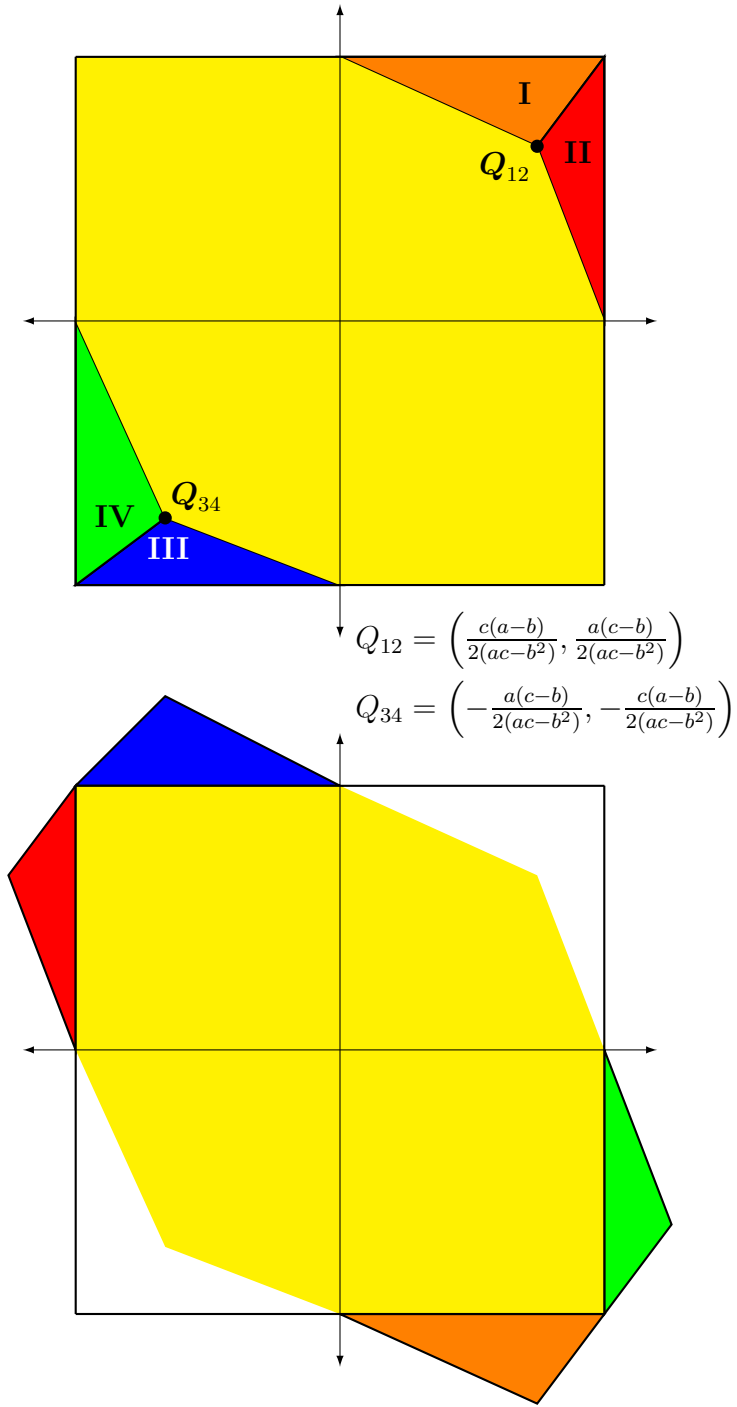

\section{A Hexagonal Fourier Calculation}
\label{section:hexfouriercalcquadform} 

\begin{definition}  
\label{definition:glossary}  
Figure~\ref{figure:notationFLabc}  gives a list of notation
and conventions that will remain fixed throughout the remainder of this article.
\end{definition}

\begin{figure}[ht]
\begin{center}
  \framebox{
  $
  \begin{aligned}
  \bfe(x) &= e^{2\pi i x},\quad \SIN(x)=\sin(2\pi x), \quad\text{and}\quad \COS(x)=\cos(2\pi x).\\
  a,b,c &\in\RR \quad \text{with}\quad a,c>0 \quad\text{and}\quad  D = ac-b^2 > 0. \\
  \a&=a-b \quad\text{and}\quad \g=c-b. \\
  F(x,y)
  &= ax^2 + 2bxy + cy^2\\
  &= \a x^2 + b(x+y)^2 + \g y^2.\\
  L(x,y)
  &= \min_{m,n\in\ZZ} F(x+m,y+n),\\
  \bfF_0 &:= \bfF_0(m,n) = c\a m+a\g n, \\
  \bfF_1 &:= \bfF_1(m,n) = c m - b n, \\
  \bfF_2 &:= \bfF_2(m,n) = a n - b m, \\
  \bfF_3 &:= \bfF_3(m,n) = \g m + \a n = \bfF_1 + \bfF_2.\\
  |L| &= |a,b,c| = \max\bigl\{|a|,|b|,|c|\bigr\}.
  \end{aligned}
  $
  }
\end{center}
\par\noindent
\parbox[][][l]{.95\hsize}{
If we need to specify~$a,b,c$ in the notation, we write
\begin{align*}
F(x,y) &= F_{a,b,c}(x,y) = F(a,b,c;x,y),\\
L(x,y) &= L_{a,b,c}(x,y) = L(a,b,c;x,y),
\end{align*}
and similarly for~$\bfF_0,\ldots,\bfF_3$.
We say that~$F$,~$L$, and~$(a,b,c)$ are \emph{normalized} if they
satisfy (cf.\  Lemma~\ref{lemma:quadformwbpositive})
\[
\framebox{$c\ge a \ge 2b \ge 0.$}
\]
\par
If we are working over the $v$-adic completion of a field, all of the
associated quantities will have a subscript~$v$, e.g.,~$(a_v,b_v,c_v)$
and $L_v$ and~$\bfF_{i,v}$.
}
\caption{Notation and Conventions and Formulas}
\label{figure:notationFLabc}  
\end{figure}

We note that the following formal identities are true in the
polynomial ring $\ZZ[a,b,c,m,n]$:
\begin{equation}
\label{eqn:idsforF0toF3}
\left.
\begin{array}{c}
\begin{aligned}
\bfF_0 - \a\bfF_1 & = Dn,\\
\bfF_0 - \g\bfF_2 & = Dm,\\
\bfF_0 + b\bfF_3 & = D(m+n),\\
\end{aligned}
\\[7\jot]
\begin{aligned}
  a \bfF_1 +  b \bfF_2 & = Dm,
  \hspace{1em}
  & b \bfF_1 + c \bfF_2 & = Dn,\\
  \a \bfF_1 + b \bfF_3 & = Dm,
  & -\g \bfF_1 + c \bfF_3 & = Dn,\\
  -\a \bfF_2 + a \bfF_3 & = Dm,
  & \g \bfF_2 + b \bfF_3 & = Dn.\\
\end{aligned}
\\
\end{array}
\right\}
\end{equation}

Our next result gives the Fourier expansion of~$L(x,y)$, which
is the~$\ZZ^2$-periodic version of the quadratic form~$F$.

\begin{theorem}
\label{theorem:fourierexpansionofL}
With notation as in
Figure~$\ref{figure:notationFLabc}$,
and in particular with~$L:(\RR/\ZZ)^2\to\RR$ being
the periodic function
\[
L(x,y) = \min_{\substack{\xi\in x+\ZZ\\ \eta\in y+\ZZ\\}} a\xi^2 + 2b\xi\eta+c\eta^2,
\]
the Fourier expansion
\[
L(x,y) = \sum_{m,n\in\ZZ} \FC(m,n)\bfe(mx+ny)
\]
of~$L(x,y)$ has Fourier coefficients given by the following
formulas:\footnote{We note that since~$ac\ne0$, the
  assumptions~$\bfF_1=0$ and~$(m,n)\ne(0,0)$ imply that~$n\ne0$, and
  similarly~$\bfF_2=0$ and~$(m,n)\ne(0,0)$ imply
  that~$m\ne0$. Further, the fact that~$D=ac-b^2\ne0$ tells us that at
  least one of~$\a=a-b$ and~$\g=c-b$ is non-zero, so~$\bfF_3=0$
  and~$(m,n)\ne(0,0)$ implies that at least one of~$m$ and~$n$ is
  non-zero. And if we normalize~$a,b,c$, then~$\a\g\ne0$, so
  $\bfF_3=0$ and~$(m,n)\ne(0,0)$ implies that both~$m$ and~$n$ are
  non-zero.}
\[
\FC(m,n) =
\begin{cases}
  \dfrac{a^2c+ac^2-2b^3}{12D} \quad \text{if $(m,n)=(0,0)$,}\hidewidth \\[3\jot]
  \dfrac{(-1)^n \a c^2}{2 \pi^2 D n^2}
  &\text{if $\bfF_1=cm-bn=0$, $(m,n)\ne(0,0)$,} \\[3\jot]
  \dfrac{(-1)^m \g a^2}{2 \pi^2 D m^2}
  &\text{if $\bfF_2=an-bm=0$, $(m,n)\ne(0,0)$,} \\[3\jot]
  \dfrac{(-1)^{m+n+1}  \a \g b}{2 \pi^2 D m n}
  &\text{if $\bfF_3=\g m+\a n=0$, $(m,n)\ne(0,0)$,} \\[3\jot]
  \dfrac{D^2 \displaystyle \SIN \left(\frac{ c \a  m  + a \g  n }{2D}\right)}
       {4 \pi ^3 (c m - b n)(a n - b m) ( \g  m + \a  n )  }
       \quad\text{otherwise.} \hidewidth\\
\end{cases}
\]
\end{theorem}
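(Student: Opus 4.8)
The plan is to compute the Fourier coefficients $\FC(m,n) = \int_{(\RR/\ZZ)^2} L(x,y)\bfe(-mx-ny)\,dx\,dy$ directly from an explicit piecewise-polynomial description of $L$ on the fundamental square $[-\tfrac12,\tfrac12]^2$ (or a convenient translate). The key geometric input, visible in Figure~\ref{figure:hexagonandsquare}, is that the region where $L(x,y)=F(x,y)$ — i.e.\ where the lattice translate $(m,n)=(0,0)$ achieves the minimum — is a hexagon (the Voronoi/Dirichlet cell of the lattice with Gram matrix $Q=\SmallMatrix{a&b\\b&c\\}$ relative to the dual metric), and on each of the remaining pieces of the unit square $L(x,y) = F(x+\e_1, y+\e_2)$ for an appropriate corner-shift $(\e_1,\e_2)\in\{0,\pm1\}^2$. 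First I would set up these pieces precisely: normalize $(a,b,c)$ via Lemma~\ref{lemma:quadformwbpositive} so that $c\ge a\ge 2b\ge0$, identify the six vertices of the hexagon and the four leftover triangles, and record on each triangle which shifted quadratic form represents $L$.

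Second, I would reduce the integral to a sum of integrals of $F(x,y)\bfe(-mx-ny)$ over the hexagon plus correction terms coming from the triangles. Because $\bfe(-mx-ny)$ is periodic, each triangle's contribution can be folded back onto the hexagon with a phase factor $\bfe(\pm m)$ or $\bfe(\pm n)$, i.e.\ a sign $(-1)^m$ or $(-1)^n$; so in fact $\FC(m,n)$ becomes a single integral of $F(x,y)\bfe(-mx-ny)$ over the whole unit square but with $F$ replaced by the genuinely periodized form — equivalently, it is cleaner to integrate the true quadratic polynomial $F(x,y)=ax^2+2bxy+cy^2$ over the hexagon against $\bfe(-mx-ny)$ and then add the four triangle integrals with their sign twists. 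Integrating a quadratic polynomial times a complex exponential over a polygon is a finite computation: one triangulates, integrates by parts twice in each variable, and is left with boundary contributions. The vertices of the hexagon and triangles are exactly the rational functions of $a,b,c$ displayed as $Q_{12},Q_{34}$ in Figure~\ref{figure:hexagonandsquare}, and the linear forms $\bfF_0,\dots,\bfF_3$ from Figure~\ref{figure:notationFLabc} are designed to be the values of $mx+ny$ (up to the factor $2D$) at those vertices — this is why $\SIN\!\big(\tfrac{c\a m + a\g n}{2D}\big)$ and the products $(cm-bn)(an-bm)(\g m+\a n)$ appear in the generic coefficient. The identities~\eqref{eqn:idsforF0toF3} are precisely the algebraic relations one needs to simplify the resulting boundary sums into the stated closed form.

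Third, I would handle the degenerate cases separately. When $\bfF_1=cm-bn=0$, one of the denominators in the generic formula vanishes, reflecting that two of the hexagon's edges are ``parallel'' to the direction $(m,n)$ in the relevant sense, so the double-integration-by-parts picks up an extra factor; one takes the limit of the generic expression as $cm-bn\to0$ (an elementary $\SIN(z)/z \to 2\pi$-type limit) and simplifies using~\eqref{eqn:idsforF0toF3} to get $\dfrac{(-1)^n\a c^2}{2\pi^2 D n^2}$, and symmetrically for $\bfF_2=0$ and $\bfF_3=0$. The $(0,0)$ coefficient is just $\int_{(\RR/\ZZ)^2}L$, which one computes as $\int_{\text{hexagon}}F + (\text{four triangle integrals})$ — a pure polynomial integration over explicit polygons — yielding $\dfrac{a^2c+ac^2-2b^3}{12D}$; alternatively one can recognize it, via~\eqref{equaton:LQhat0kappavB}, as $4\kappa_v^{\Bern}$ and cross-check against the Riemann-sum limit. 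Throughout, the normalization $c\ge a\ge 2b\ge0$ guarantees the hexagon genuinely has six sides (rather than degenerating), so the geometric picture is uniform.

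The main obstacle I expect is purely organizational rather than conceptual: keeping the polygon combinatorics and the integration-by-parts bookkeeping under control so that the vertex contributions collapse into the clean symmetric answer. Concretely, after two integrations by parts over a triangulated hexagon one obtains a sum over the six vertices of terms of the shape (quadratic in the vertex coordinates)$\times\bfe(-mx_v-ny_v)/(\text{product of two edge-direction linear forms})$, and one must check that the vertex-coordinate numerators, the denominators, and the triangle corrections all conspire — using exactly the six relations in~\eqref{eqn:idsforF0toF3} and the substitutions $\a=a-b$, $\g=c-b$ — to produce the single term $\dfrac{D^2\SIN((c\a m+a\g n)/2D)}{4\pi^3(cm-bn)(an-bm)(\g m+\a n)}$. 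Verifying the exponent on $D$, the power of $2\pi$, and the sign $(-1)^{m+n+1}$ in the $\bfF_3$ case is where sign errors are most likely, so I would pin down conventions for $\bfe$, orientation of the boundary, and the fundamental domain at the very start and carry them rigidly through.
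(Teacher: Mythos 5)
Your plan is essentially the same as the paper's: identify the hexagon where $L=F$, decompose the unit square into the central octagon plus four shifted triangles (Figure~\ref{figure:hexagonandsquare}), compute $\FC(m,n)$ as a sum of explicit polygon integrals of a quadratic times an exponential, and obtain the degenerate $\bfF_i=0$ cases by taking limits of the generic formula using the relations~\eqref{eqn:idsforF0toF3}. The only divergence is in execution — you propose double integration by parts collapsing to vertex contributions (and phrasing the hexagon as the fundamental domain to avoid the triangle corrections), whereas the paper exploits the symmetries $F_{a,b,c}(x,y)=F_{a,b,c}(-x,-y)=F_{c,b,a}(y,x)$ to reduce to two triangle integrals and then evaluates them directly — but these are two ways of doing the same elementary computation.
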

\begin{proof}
The region
\[
\Hcal = \bigl\{ (x,y)\in\RR^2 : F(x,y)=L(x,y) \bigr\}
\]
where~$F$ and~$L$ are equal is a hexagon, as shown in the bottom
illustration in Figure~\ref{figure:hexagonandsquare}. The intersection
of~$\Hcal$ with the unit square
\[
\Scal = \bigl\{ (x,y)\in\RR^2 : |x|\le\tfrac12,\;|y|\le\tfrac12 \bigr\}
\]
is the central octagon in both illustrations in
Figure~\ref{figure:hexagonandsquare}.  The set
difference~$\Scal\setminus\Hcal$ consists of four triangles, which are
shifted versions of the set difference~$\Hcal\setminus\Scal$ again as
shown in Figure~\ref{figure:hexagonandsquare}.  We label the four
triangular regions as follows:
\[
\begin{array}{l@{\quad}r@{}l} \hline
  \color{orange}
  \text{Region I} & L(x,y)&{}=F(x,y-1) \\ \hline
  \color{red}
  \text{Region II} & L(x,y)&{}=F(x-1,y) \\ \hline
  \color{blue}
  \text{Region III} & L(x,y)&{}=F(x,y+1) \\ \hline
  \color{green}
  \text{Region IV} & L(x,y)&{}=F(x+1,y) \\ \hline
\end{array}
\]

We start with some observations that we use in the computation of the
Fourier coefficients of~$L$.  The functions~$F$ and~$L$ have the
following symmetries:
\begin{align*}
  F_{a,b,c}( x,y) &= F_{a,b,c}( -x,-y) = F_{c,b,a}( y,x) , \\
  L_{a,b,c}( x,y) &= L_{a,b,c}( -x,-y) = L_{c,b,a}( y,x) .
\end{align*}
The sign change symmetry identifies Regions~I and~III, leading to the equality
\begin{multline*}
  \DI_{\textup{III}} \bigl\{ F_{a,b,c}(x,y-1) - F_{a,b,c}(x,y) \bigr\}\bfe(mx+ny) \\*
  = \DI_{\textup{I}} \bigl\{ F_{a,b,c}(x,y+1) - F_{a,b,c}(x,y) \bigr\}\bfe(-mx-ny),
\end{multline*}
and similarly for Regions~II and~IV. The reflection symmetry together
with the parameter swap $a\leftrightarrow{c}$ identifies Regions~I
and~II, leading to the equality
\begin{multline*}
  \DI_{\textup{II}} \bigl\{ F_{a,b,c}(x,y-1) - F_{a,b,c}(x,y) \bigr\}\bfe(mx+ny) \\*
  = \DI_{\textup{I}} \bigl\{ F_{c,b,a}(x,y-1) - F_{c,b,a}(x,y) \bigr\}\bfe(mx+ny),
\end{multline*}
and similarly for Regions~III and~IV. 
\par
Using these observations, we find that
\begin{align*}
  \FC & (m,n) = \DI_\Scal L(x,y)\bfe(mx+ny) \\
  &= \DI_\Scal F(x,y)\bfe(mx+ny) \\
  &\qquad {}+ \left( \DI_{\textup{I}} + \DI_{\textup{II}} + \DI_{\textup{III}} + \DI_{\textup{IV}} \right) \bigl\{ L(x,y)-F(x,y) \bigr\} \bfe(mx+ny) \\
  &= \DI_\Scal F(x,y)\bfe(mx+ny) \\
  & \qquad {}+ \DI_{\textup{I}} \bigl\{ F(x,y-1) - F(x,y) \bigr\}\bfe(mx+ny) \\
  & \qquad {}+ \DI_{\textup{II}} \bigl\{ F(x-1,y) - F(x,y) \bigr\}\bfe(mx+ny) \\
  & \qquad {}+ \DI_{\textup{III}} \bigl\{ F(x,y+1) - F(x,y) \bigr\}\bfe(mx+ny) \\
  & \qquad {}+ \DI_{\textup{IV}} \bigl\{ F(x+1,y) - F(x,y) \bigr\}\bfe(mx+ny) \\
  &= \DI_\Scal F(x,y)\COS(mx+ny) \\
  & \qquad {}+ 2\DI_{\textup{I}} \bigl\{ F(x,y-1) - F(x,y) \bigr\}\COS(mx+ny) \\
  & \qquad {}+ 2\DI_{\textup{II}} \bigl\{ F(x-1,y) - F(x,y) \bigr\}\COS(mx+ny) \\
  &= \int_{-\frac12}^{\frac12} \int_{-\frac12}^{\frac12} (ax^2+2bxy+cy^2)  \COS(mx+ny)\, dx\, dy \\
  & \qquad {}+ 2 \int_{\tfrac{a(c-b)}{2(ac-b^2)}}^{\tfrac12} \int_{\tfrac{c}{b}(\tfrac12-y)}^{\tfrac12-\tfrac{c-b}{a-b}(\tfrac12-y)} (c - 2 b x - 2 c y)\COS(mx+ny)
  \, dx \, dy \\
  & \qquad {}+ 2 \int_{\tfrac{c(a-b)}{2(ac-b^2)}}^{\tfrac12} \int_{\tfrac{a}{b}(\tfrac12-x)}^{\tfrac12-\tfrac{a-b}{c-b}(\tfrac12-x)} (a - 2 a x - 2 b y)\COS(mx+ny)
  \, dy \, dx.
\end{align*}  

It is now an easy task\footnote{Easy using a computer algebra system
  such as Mathematica, otherwise it is a feasible, but tedious, task.}
to compute these integrals. We start with the case that~$m$ and~$n$
are non-zero integers, and we assume for the moment that
\[
\bfF_1(m,n)\bfF_2(m,n)\bfF_3(m,n)\ne 0.
\]
Then the integral over Regions~I and~III is given explicitly by
\begin{align*}
  &\left( \DI_{\textup{I}} + \DI_{\textup{III}} \right) \bigl\{ L(x,y)-F(x,y) \bigr\} \bfe(mx+ny) \\
  &= 
\int_{\tfrac{a(c-b)}{2(ac-b^2)}}^{\tfrac12} \int_{\tfrac{c}{b}(\tfrac12-y)}^{\tfrac12-\tfrac{c-b}{a-b}(\tfrac12-y)}
\hspace*{-1.0em}
(c - 2 b x - 2 c y)\cdot \COS(mx+ny) \, dx \, dy \\
  &= \frac{b (a-b)}{4 \pi ^2 m ((c - b) m + (a - b) n)}  (-1)^{m+n} \\
  &\qquad{} +
  \frac{1}{4 \pi ^3 (c m - b n)}
  \cdot
  \left( \frac{a c - b^2}{(c - b) m + (a - b) n} \right)^2 \cdot \\
  &\hspace{10em} {}
  \SIN \left(\frac{ c (a - b) m  + a (c - b) n }{2(a c - b^2)}\right) \\
  &= \frac{(-1)^{m+n} b \a}{4 \pi ^2 m \bfF_3}   
  +
  \frac{D^2}{4 \pi ^3 \bfF_1 \bfF_3^2}
  \SIN \left(\frac{\bfF_0 }{2 D}\right).
\end{align*}
Further, as noted earlier, the integral over Regions~II and~IV is the
same with the swaps~$a\leftrightarrow{c}$
and~$m\leftrightarrow{n}$.\footnote{We note that these swaps
  leave~$\bfF_0$ and~$\bfF_3$ invariant, while swapping
  $\a\leftrightarrow\g$ and $\bfF_1\leftrightarrow\bfF_2$.}
Hence
\begin{align*}
  \left( \DI_{\textup{I}} \right. &
  \left. + \DI_{\textup{II}} + \DI_{\textup{III}} + \DI_{\textup{IV}} \right) \bigl\{ L(x,y)-F(x,y) \bigr\} \bfe(mx+ny) \,dx\,dy\\
  &=
  \left\{\frac{(-1)^{m+n} b \a}{4 \pi ^2 m \bfF_3} + \frac{D^2}{4 \pi ^3 \bfF_1 \bfF_3^2}  \SIN \left(\frac{\bfF_0 }{2 D}\right) \right\} \\
  &\omit\hfill$\displaystyle
  + \left\{\frac{(-1)^{m+n} b \g}{4 \pi ^2 n \bfF_3} + \frac{D^2}{4 \pi ^3 \bfF_2 \bfF_3^2}  \SIN \left(\frac{\bfF_0 }{2 D}\right) \right\}$\\
  &= \frac{(-1)^{m+n}b}{4\pi^2 m n}
  +  \frac{D^2{\displaystyle\SIN \left({ \bfF_0 }/{2D}\right)}} {4 \pi^3 \bfF_1 \bfF_2 \bfF_3}.
\end{align*}
On the other hand, the integral of~$F$ over the square is simply
\[
\DI_{\Scal} F(x,y)\COS(mx+ny) = \frac{(-1)^{m+n+1}b}{4\pi^2 m n},
\]
which cancels the first term in the sum of the four-triangle
integrals.\footnote{Presumably this cancellation is not a
    coincidence!} Hence
\begin{equation}
  \label{eqn:hatLmngen}
  \FC(m,n) =
  \frac{D^2{\displaystyle\SIN \left({ \bfF_0 }/{2D}\right)}} {4 \pi^3 \bfF_1 \bfF_2 \bfF_3}.
\end{equation}
One can check by a direct computation that the formula~\eqref{eqn:hatLmngen}
for $\FC(m,n)$ is valid if one, but not both, of~$m$ and~$n$
is~$0$. (This despite the fact that~$m$ and~$n$ seem to appear in the
denominators of some of the intermediate calculations.)

We note that we can use~\eqref{eqn:idsforF0toF3} to rewrite the
formula for~$\FC(m,n)$ so that the argument of the sine function is
instead related to one of the other~$\bfF_i$. Thus 
\begin{align*}
\SIN\left(\frac{\bfF_0}{2D}\right) &=
\SIN\left(\frac{\a\bfF_1+Dn}{2D}\right) =
(-1)^n \SIN\left(\frac{\a\bfF_1}{2D}\right),\\
\SIN\left(\frac{\bfF_0}{2D}\right) &=
\SIN\left(\frac{\g\bfF_2+Dm}{2D}\right) =
(-1)^m \SIN\left(\frac{\g\bfF_2}{2D}\right),\\
\SIN\left(\frac{\bfF_0}{2D}\right) &=
\SIN\left(\frac{-b\bfF_3+D(m+n)}{2D}\right) =
(-1)^{m+n+1} \SIN\left(\frac{b\bfF_3}{2D}\right).
\end{align*}
Substituting these into~\eqref{eqn:hatLmngen} gives three additional
formulas for~$\FC(m,n)$,
\begin{align}
  \label{eqn:hatLmngen1}
  \FC(m,n)
  &= \frac{(-1)^nD^2}{4\pi^3} \frac{\SIN(\a\bfF_1/2D\bigr)}{\bfF_1\bfF_2\bfF_3}  , \\
  \label{eqn:hatLmngen2}
  \FC(m,n)
  &= \frac{(-1)^mD^2}{4\pi^3} \frac{\SIN(\g\bfF_2/2D\bigr)}{\bfF_1\bfF_2\bfF_3} , \\
  \label{eqn:hatLmngen3}
  \FC(m,n)
  &= \frac{(-1)^{m+n+1}D^2}{4\pi^3} \frac{\SIN(b\bfF_3/2D\bigr)}{\bfF_1\bfF_2\bfF_3} .
\end{align}
\par
Continuing with our assumption that~$(m,n)\ne(0,0)$, we consider the
case that one of~$\bfF_1,\bfF_2,\bfF_3$ vanishes. An important
observation is that~\eqref{eqn:idsforF0toF3} and the fact that~$D\ne0$
tells us that at most one of these~$\bfF_i(m,n)$ can vanish.
\par
We fix an integer pair~$(m,n)\ne(0,0)$, we take a sequence of values
of~$(a,b,c)$ that cause one of the~$\bfF_i$ to vanish.  The integrals
that occur in the computation of~$\FC(a,b,c;m,n)$ are integrals of a
continuous function~$L$ over a compact set, where~$L$ depends
continuously on~$(a,b,c)$, so we can move the limit
as~$\bfF_i(a,b,c;m,n)\to0$ across the integral.
\par
Thus~\eqref{eqn:hatLmngen1} yields
\begin{align}
  \label{eqn:FCasF1to0}
  \lim_{\bfF_1\to0} \FC(m,n)
  &= \lim_{\bfF_1\to0} \frac{(-1)^nD^2}{4\pi^3} \frac{\SIN(\a\bfF_1/2D\bigr)}{\bfF_1\bfF_2\bfF_3} \notag\\
  &= \lim_{\bfF_1\to0} \frac{(-1)^nD^2}{4\pi^3\bfF_2\bfF_3}\cdot \frac{\sin(2\pi\a\bfF_1/2D\bigr)}{2\pi\a\bfF_1/2D}\cdot\frac{2\pi\a}{2D}\notag\\
  &=  \frac{(-1)^nD\a}{4\pi^2} \lim_{\bfF_1\to0} \frac{1}{\bfF_2\bfF_3} .
\end{align}
As $\bfF_1=cm-bn\to0$, we have (note that $c\ne0$)
\begin{align*}
\lim_{\bfF_1\to0} \bfF_2
&= \lim_{cm\to bn} an-bm
= \lim_{cm\to bn} an-\frac{b}{c}\cdot cm
= an-\frac{b}{c}\cdot bn
= \frac{Dn}{c}, \\
\lim_{\bfF_1\to0} \bfF_3
&= \lim_{cm\to bn} \g m + \a n
= \lim_{cm\to bn}  \frac{\g}{c}\cdot cm + \a n
= \frac{\g}{c}\cdot bn + \a n
= \frac{Dn}{c}.
\end{align*}
Substituting these two limits into~\eqref{eqn:FCasF1to0} yields
\[
  \lim_{\bfF_1\to0} \FC(m,n) = \frac{(-1)^n\a c^2}{4\pi^2 D n^2} .
\]
Similar computations using~\eqref{eqn:hatLmngen2}
and~\eqref{eqn:hatLmngen3} give the analogous formulas for the values
of~$\FC(m,n)$ as~$\bfF_2\to0$ and as~$\bfF_3\to0$. We leave the
details to the reader.
\par
It remains to compute~$\FC(0,0)$, for which the relevant
integrals are easy and left as an exercise.  This concludes the
proof of Theorem~\ref{theorem:fourierexpansionofL}.
\end{proof}

\section{Averaging (Periodic) Functions over (Torsion) Points}
\label{section:avgperiodicfuncs} 

We introduce a convenient notation for the expected value (average) of
a function over a set, and in particular over the $d$-torsion points
of an abelian group.

\begin{definition}
\label{definition:Avg}
Let~$S$ be a finite set, and let~$f:S\to\RR$ be a real-valued function. We write
\[
\AvgL_{x\in S} f(x) = \frac{1}{\#S} \sum_{x\in S} f(x).
\]
Similarly,
\[
\AvgL_{\substack{x,y\in S\\ x\ne y\\}} f(x-y) = \frac{1}{\#S^2-\#S}\sum_{\substack{x,y\in S\\ x\ne y\\}}f(x-y).
\]
If~$S=A$ is an abelian group and~$d\ge1$, by a slight abuse of notation we write
\[
(\Avg_d f)(x) = \AvgL_{t\in A[d]}f(x+t) =  \frac{1}{\#A[d]}\sum_{t\in A[d]} f(x+t)
\]
for the average of~$f$ at the $d$-torsion translates of~$x$, and we call~$\Avg_df$
the~\emph{$d$-average of~$f$}.
\end{definition}

\begin{example}
We illustrate Definition~\ref{definition:Avg} with three examples:
\begin{parts}
\Part{(1)}
For a function~$L:(\RR/\ZZ)^2\to\RR$ such as the one defined in
Theorem~\ref{theorem:fourierexpansionofL}, we have
\[
(\Avg_d L)(x,y) = \frac{1}{d^2} \sum_{i=0}^{d-1} \sum_{j=0}^{d-1} L\left(x+\frac{i}{d},y+\frac{j}{d}\right).
\]
\Part{(2)}
For an abelian variety~$A$ of dimension~$g$
and a function~$\lambda:A\to\RR$, we have
\[
(\Avg_d\lambda)(P) = \frac{1}{d^{2g}} \sum_{T\in A[d]} \lambda(P+T).
\]
\Part{(3)}
For any integer~$m$ and the function~$\bfe_m(x)=e^{2\pi i m x}$, we have
\[
(\Avg_d\bfe_m)(x) = \begin{cases}
\bfe_m(x) &\text{if $d\mid m$, } \\
0 &\text{if $d\nmid m$. } \\
\end{cases}
\]
\end{parts}
\end{example}

\begin{definition}
The \emph{2nd periodic Bernoulli polynomial} is the function defined by
\[
\text{$\BB_2(x) = x^2-x+\dfrac16$ for $0\le x\le 1$, and $\BB_2(x+n)=\BB_2(x)$ for $n\in\ZZ$.}
\]
The well-known Fourier expansion of~$\BB_2(x)$ is
\begin{equation}
  \label{eqn:B2fourier}
  \BB_2(x) = \frac{1}{2\pi^2} \sideset{}{^\prime}\sum_{k\in\ZZ} \frac{\bfe(kx)}{k^2},
\end{equation}
from which we  immediately obtain the distribution relation
\begin{equation}
  \label{eqn:B2distributionrelation}
  (\Avg_N\BB_2)(x) = \frac{1}{N^2}\BB_2(Nx).
\end{equation}
\end{definition}

We recall a Fej\'er kernel type estimate for~$\BB_2$.

\begin{lemma}
\label{lemma:avg2ndBernpoly}
Let~$R\ge1$ be an integer, and let
\[
T \subset \frac{1}{R}\ZZ \quad\text{with}\quad N=\#T
\]
be a set of~$N$ distinct rational numbers whose denominators
divide~$R$. Then
\[
\AvgL_{\substack{s,t\in T\\s\ne t\\}} \; \BB_2(s-t) \ge \frac{1}{6R^2} - \frac{1}{6(N-1)}.
\]
\end{lemma}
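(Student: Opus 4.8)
The plan is to use the Fourier expansion~\eqref{eqn:B2fourier} of the periodic Bernoulli polynomial together with the nonnegativity of the Fourier coefficients~$1/k^2$, which is the standard trick underlying all Fej\'er-kernel-type estimates. Concretely, I would write the double average as a sum over~$k\in\ZZ$ and separate off the~$k=0$ term.

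First I would expand, using~\eqref{eqn:B2fourier},
\[
\AvgL_{\substack{s,t\in T\\ s\ne t\\}} \BB_2(s-t)
= \frac{1}{N^2-N} \sum_{\substack{s,t\in T\\ s\ne t\\}} \frac{1}{2\pi^2}\sideset{}{^\prime}\sum_{k\in\ZZ} \frac{\bfe(ks)\bfe(-kt)}{k^2}.
\]
Rearranging the (absolutely convergent) sum and recognizing that for each fixed~$k$ the double sum over~$s,t$ factors as~$\bigl|\sum_{t\in T}\bfe(kt)\bigr|^2$, I would obtain
\[
\AvgL_{\substack{s,t\in T\\ s\ne t\\}} \BB_2(s-t)
= \frac{1}{N^2-N} \cdot \frac{1}{2\pi^2}\sideset{}{^\prime}\sum_{k\in\ZZ} \frac{1}{k^2}\Biggl( \Bigl|\sum_{t\in T}\bfe(kt)\Bigr|^2 - N \Biggr),
\]
where the~$-N$ comes from removing the diagonal terms~$s=t$, each of which contributes~$\bfe(0)=1$ to the inner sum.

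Next I would split this into the ``diagonal'' contribution and the rest. The terms~$\bigl|\sum_{t\in T}\bfe(kt)\bigr|^2$ are all~$\ge0$, so dropping all of them gives the lower bound
\[
\AvgL_{\substack{s,t\in T\\ s\ne t\\}} \BB_2(s-t)
\ge \frac{1}{N^2-N} \cdot \frac{1}{2\pi^2}\sideset{}{^\prime}\sum_{k\in\ZZ} \frac{-N}{k^2}
= \frac{-N}{N^2-N}\cdot\frac{1}{2\pi^2}\cdot\frac{\pi^2}{3}
= -\frac{1}{6(N-1)},
\]
using~$\sum_{k\ne0}k^{-2}=\pi^2/3$. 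To recover the main term~$\tfrac{1}{6R^2}$ I would not simply drop the whole square but instead keep the contribution of those~$k$ divisible by~$R$: since every~$t\in T$ lies in~$\tfrac1R\ZZ$, we have~$\bfe(kt)=1$ whenever~$R\mid k$, hence~$\bigl|\sum_{t\in T}\bfe(kt)\bigr|^2=N^2$ for all such~$k$. Isolating these terms and discarding the remaining (still nonnegative) ones gives an extra contribution of~$\frac{1}{N^2-N}\cdot\frac{1}{2\pi^2}\sum_{R\mid k,\,k\ne0}\frac{N^2}{k^2} = \frac{N^2}{N^2-N}\cdot\frac{1}{2\pi^2 R^2}\cdot\frac{\pi^2}{3} = \frac{N}{(N-1)}\cdot\frac{1}{6R^2}\ge \frac{1}{6R^2}$, and combining with the~$-\frac{1}{6(N-1)}$ above yields the claimed inequality.

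\emph{The main subtlety} is the bookkeeping when splitting the sum: one must be careful that the~$k=0$ term is excluded throughout (the primed sum), that the diagonal correction~$-N$ is applied to every~$k\ne0$ including those with~$R\mid k$, and that one does \emph{not} double-count by adding both the~$R\mid k$ contribution and an over-optimistic bound on the full square. There is no genuine analytic difficulty here — it is essentially a positivity argument — but I would present the rearrangement of the double series carefully, perhaps remarking that absolute convergence (guaranteed by~$\sum 1/k^2<\infty$ and~$|T|<\infty$) justifies interchanging the order of summation.
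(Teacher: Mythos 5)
Your argument is correct and follows essentially the same route as the paper's proof: Fourier-expand $\BB_2$, rearrange the double sum to produce the Fej\'er-type kernel $\bigl|\sum_{t\in T}\bfe(kt)\bigr|^2 - N$, use nonnegativity of $|\cdot|^2$ to drop all terms except those with $R\mid k$ (where the square equals $N^2$), and evaluate via $2\zeta(2)=\pi^2/3$. The only cosmetic difference is that the paper absorbs an extra $-N$ into the $R\mid k$ contribution to get exactly $\tfrac{1}{6R^2}$ without the final $\tfrac{N}{N-1}\ge 1$ step you use; both are valid.
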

\begin{proof}
Let~$T=\{t_1,\ldots,t_N\}$. We compute
\begin{align*}
  \AvgL_{\substack{s,t\in T\\s\ne t\\}} {} & \; \BB_2(s-t) \\
  &= \frac{1}{N^2-N} \sum_{\substack{i,j=1\\i\ne j\\}}^N \BB_2(t_i-t_j) \\
  &= \frac{1}{N^2-N} \sum_{\substack{i,j=1\\i\ne j\\}}^N
  \frac{1}{2\pi^2} \sideset{}{^\prime}\sum_{k\in\ZZ} \frac{\bfe\bigl(k(t_i-t_j)\bigr)}{k^2}
  \quad\text{from \eqref{eqn:B2fourier},}  \\
  &= \frac{1}{2\pi^2(N^2-N)}
  \sideset{}{^\prime}\sum_{k\in\ZZ} \frac{1}{k^2}
  \sum_{\substack{i,j=1\\i\ne j\\}}^N  \bfe\bigl(k(t_i-t_j)\bigr) \\
  &= \frac{1}{2\pi^2(N^2-N)}
  \sideset{}{^\prime}\sum_{k\in\ZZ} \frac{1}{k^2}
  \biggl\{
  \underbrace{\left| \sum_{i=1}^N \bfe(k t_i) \right|^2}_{\hidewidth
    \substack{
      \text{this quantity is always $\ge0$,}  \\
      \text{and if $R\mid k$, then it equals $N^2$} \\
    }\hidewidth
  }
  - N \biggr\} \\
  &\ge \frac{1}{2\pi^2(N^2-N)} \sideset{}{^\prime}\sum_{k\in\ZZ} \frac{N^2-N}{(Rk)^2}
  - \frac{1}{2\pi^2(N^2-N)} \sideset{}{^\prime}\sum_{k\in\ZZ} \frac{N}{k^2} \\
  &= \frac{1}{2\pi^2 R^2} \cdot 2\z(2) - \frac{1}{2\pi^2 (N-1)} \cdot 2\z(2) \\
  &= \frac{1}{6R^2} - \frac{1}{6(N-1)}.
\end{align*}
This completes the proof of Lemma~\ref{lemma:avg2ndBernpoly}.
\end{proof}

We next express certain $d$-averages of the function~$L(x,y)$ in
Theorem~\ref{theorem:fourierexpansionofL} in terms of $d$-averages of
the second Bernoulli polynomial.

\begin{corollary} 
\label{corollary:avglambdaberntobern2}
Let~$a,b,c\in\ZZ$ with $D=ac-b^2>0$, let~$\a=a-b$ and~$\g=c-b$,
and let~$d$ by an integer satisfying
\begin{equation}
  \label{eqn:deqiuv02Dabc2q}
  d \equiv 0 \left(\bmod \frac{2D}{\gcd(a,b,c)^2} \right).
\end{equation}
Then the~$d$-average of the~$\ZZ^2$-periodic function
\[
L(x,y) = \min_{\substack{\xi\in x+\ZZ\\ \eta\in y+\ZZ\\}} a\xi^2 + 2b\xi\eta+c\eta^2
\]
is given by the formula
\begin{align}
\label{eqn:AvgdLxy3B2s}
\Avg_d L(x,y) = \FC(0,0)
&+
\frac{ \a (c,b)^2}{ D d^2} \BB_2\left( \frac{d (b x +  c y)}{\gcd(c,b)} \right) \notag\\*
&+
\frac{ \g (a,b)^2}{ D d^2} \BB_2\left( \frac{d (a x +  b y)}{\gcd(a,b)}  \right) \notag\\*
&+
\frac{ b(\a,\g)^2}{ D d^2} \BB_2\left( \frac{d (\a x -\g y)}{\gcd(\a,\g)} \right).
\end{align}
\end{corollary}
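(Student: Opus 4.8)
\section*{Proof proposal}

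The plan is to apply the $d$-averaging operator to the Fourier expansion of $L$ supplied by Theorem~\ref{theorem:fourierexpansionofL} and average term by term. Since $\Avg_d$ annihilates the mode $\bfe(mx+ny)$ whenever $d\nmid m$ or $d\nmid n$, and fixes it when $d$ divides both, one gets
\[
\Avg_d L(x,y)=\FC(0,0)+\sum_{\substack{(m,n)\neq(0,0)\\ d\mid m,\ d\mid n}}\FC(m,n)\,\bfe(mx+ny).
\]
Two preliminary observations. First, writing $e=\gcd(a,b,c)$, we have $D/e^2=(a/e)(c/e)-(b/e)^2\in\ZZ$, so $2D/e^2$ is an even integer; hence the hypothesis \eqref{eqn:deqiuv02Dabc2q} forces $d$ to be a multiple of $2D/e^2$ and in particular \emph{even}, which makes every sign $(-1)^m$, $(-1)^n$, $(-1)^{m+n}$ occurring in Theorem~\ref{theorem:fourierexpansionofL} equal to $1$ on the surviving modes (both coordinates being even). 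Second, by \eqref{eqn:idsforF0toF3} together with $D\neq0$, at most one of $\bfF_1(m,n)$, $\bfF_2(m,n)$, $\bfF_3(m,n)$ vanishes when $(m,n)\neq(0,0)$, so the index set above splits into a ``generic'' part and three pairwise disjoint lines.

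The key point, and the step I expect to require the most care, is that the generic modes contribute nothing. Suppose $(m,n)=(dm',dn')\neq(0,0)$ with $\bfF_1\bfF_2\bfF_3\neq0$, so that $\FC(m,n)=D^2\SIN(\bfF_0/2D)/(4\pi^3\bfF_1\bfF_2\bfF_3)$. Since each $\bfF_i$ is linear, $\bfF_0(m,n)=d\,\bfF_0(m',n')$, hence
\[
\SIN\left(\frac{\bfF_0(m,n)}{2D}\right)=\sin\left(\frac{\pi d\,\bfF_0(m',n')}{D}\right).
\]
Writing $d=(2D/e^2)k$, the argument equals $2\pi k\,\bfF_0(m',n')/e^2$; and since $\bfF_0=c\a m'+a\g n'=(ca-cb)m'+(ac-ab)n'$ is divisible by $e^2$ (as $e\mid a,b,c$ gives $e^2\mid ac,\ ab,\ cb$), this is an integer multiple of $2\pi$. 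Therefore $\SIN(\bfF_0/2D)=0$, so $\FC(m,n)=0$; the identical computation handles the edge cases where exactly one of $m,n$ vanishes (where the generic formula still applies). This is exactly where the congruence hypothesis \eqref{eqn:deqiuv02Dabc2q} is used, and it is what collapses the two-variable Fourier series into a finite sum of one-variable Bernoulli polynomials.

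It remains to evaluate the three line contributions and restore $\FC(0,0)$. Consider the line $\bfF_1=cm-bn=0$: setting $g=\gcd(b,c)$, $b=g b_1$, $c=g c_1$ with $\gcd(b_1,c_1)=1$, its integer points are exactly $(m,n)=(b_1 t,c_1 t)$, $t\in\ZZ$, and among these $d\mid m$ and $d\mid n$ holds iff $d\mid t$ (by B\'ezout, using $\gcd(b_1,c_1)=1$). Substituting $(m,n)=(b_1 d s,\ c_1 d s)$ into the $\bfF_1=0$ case of Theorem~\ref{theorem:fourierexpansionofL}, using that $d$ is even so $(-1)^n=1$, that $c^2/c_1^2=g^2=\gcd(b,c)^2$, and that $\bfe(mx+ny)=\bfe\bigl(s\cdot d(bx+cy)/\gcd(b,c)\bigr)$, this line's total is
\[
\sum_{s\neq0}\frac{\a\,\gcd(b,c)^2}{2\pi^2 D d^2 s^2}\,\bfe\left(\frac{s\,d(bx+cy)}{\gcd(b,c)}\right)=\frac{\a\,\gcd(b,c)^2}{D d^2}\,\BB_2\left(\frac{d(bx+cy)}{\gcd(b,c)}\right),
\]
the last equality being the Fourier expansion \eqref{eqn:B2fourier} of $\BB_2$. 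The line $\bfF_2=an-bm=0$ is handled the same way (or via the symmetry $a\leftrightarrow c$, $m\leftrightarrow n$ of Theorem~\ref{theorem:fourierexpansionofL}), and $\bfF_3=\g m+\a n=0$ by the analogous parametrization $(m,n)=(\a_1 d s,\ -\g_1 d s)$ with $\gcd(\a,\g)=g$, $\a=g\a_1$, $\g=g\g_1$; these yield the remaining two terms of \eqref{eqn:AvgdLxy3B2s}. Summing the four pieces gives the asserted formula.
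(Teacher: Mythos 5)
Your proposal is correct and follows essentially the same route as the paper's proof: apply the $d$-averaging operator to the Fourier expansion from Theorem~\ref{theorem:fourierexpansionofL}, use the congruence hypothesis to show the sine factor (and hence the generic Fourier coefficient $\FC(dm,dn)$ with $\bfF_1\bfF_2\bfF_3\neq0$) vanishes, then parametrize the three lines $\bfF_i=0$ and reassemble each as a Bernoulli series via~\eqref{eqn:B2fourier}. Your added observation that the surviving modes on each line are precisely $d$ times the primitive direction vector (via $\gcd$ normalization and B\'ezout) is a slightly more explicit phrasing of the reparametrization the paper carries out implicitly, but the argument is the same.
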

\begin{proof}
The congruence condition~\eqref{eqn:deqiuv02Dabc2q} says that~$d$ satisfies
\[
\frac{d\gcd(a,b,c)^2}{D}\in\ZZ,
\]
which in turn implies that
\[
\SIN\left( \frac{c\a d m + a \g d n}{2D} \right)
= \sin\left( \pi\cdot\frac{d\gcd(a,b,c)^2}{D}\cdot \frac{c\a m+a\g n}{\gcd(a,b,c)^2} \right) = 0,
\]
since~$a,c,\a,\g$ are all divisible by~$\gcd(a,b,c)$.  Then
Theorem~\ref{theorem:fourierexpansionofL} says that the associated
Fourier coefficient satisfies
\[
\FC(dm,dn) = 0 \quad\text{unless}\quad \bfF_1(m,n)\bfF_2(m,n)\bfF_3(m,n)=0.
\]
We note that if~$D\ne0$ and~$(m,n)\ne(0,0)$, then at most one
of the linear forms~$\bfF_1,\bfF_2,\bfF_3$ may vanish, so aside from~$\FC(0,0)$, the
Fourier series splits into three sums. We compute (note $d$ is even,
so the powers with~$(-1)^d$ may be omitted)
\begin{align*}
  \Avg_d L(x,y) - \FC(0,0)
  &=  \hspace{2em}
  \sideset{}{^\prime}\sum_{\hidewidth \substack{m,n\in\ZZ\\ \bfF_1(m,n)\bfF_2(m,n)\bfF_3(m,n)=0\\}\hidewidth }
  \FC(dm,dn) \bfe(dmx+dny) \\
&=
\sideset{}{^\prime}\sum_{\substack{m,n\in\ZZ\\ \bfF_1(m,n)=0\\}}
\dfrac{(-1)^{dn} \a c^2}{2 \pi^2 D (dn)^2} \bfe(dmx+dny) \\
&\qquad{}+
\sideset{}{^\prime}\sum_{\substack{m,n\in\ZZ\\ \bfF_2(m,n)=0\\}}
\dfrac{(-1)^{dm} \g a^2}{2 \pi^2 D (dm)^2} \bfe(dmx+dny) \\
&\qquad{}+
\sideset{}{^\prime}\sum_{\substack{m,n\in\ZZ\\ \bfF_3(m,n)=0\\}}
\dfrac{(-1)^{dm+dn+1}  \a \g b}{2 \pi^2 D (dm)(dn)} \bfe(dmx+dny)\\
&\omit\hfill \begin{tabular}[t]{l}
  using the formulas for $\FC(m,n)$\\ from Theorem~\ref{theorem:fourierexpansionofL},\\
  \end{tabular} \\
&=
\frac{ \a c^2}{2 \pi^2 D d^2}
\sideset{}{^\prime}\sum_{\substack{m,n\in\ZZ\\ \bfF_1(m,n)=0\\}}
\dfrac{1}{n^2} \bfe(dmx+dny) \\
&\qquad{}+
\frac{ \g a^2}{2 \pi^2 D d^2}
\sideset{}{^\prime}\sum_{\substack{m,n\in\ZZ\\ \bfF_2(m,n)=0\\}}
\dfrac{1}{m^2} \bfe(dmx+dny) \\
&\qquad{}+
\frac{ \a \g b}{2 \pi^2 D d^2}
\sideset{}{^\prime}\sum_{\substack{m,n\in\ZZ\\ \bfF_3(m,n)=0\\}}
\dfrac{-1}{mn} \bfe(dmx+dny).
\end{align*}
We rewrite the last three sums using
\begin{align*}
  \bigl\{ (m,n)\in\ZZ^2 : \bfF_1(m,n)=0 \bigr\} 
  &= \left\{ \left( \frac{bk}{(c,b)}, \frac{ck}{(c,b)} \right) : k\in\ZZ \right\}, \\
  \bigl\{ (m,n)\in\ZZ^2 : \bfF_2(m,n)=0 \bigr\} 
  &= \left\{ \left( \frac{ak}{(a,b)}, \frac{bk}{(a,b)} \right) : k\in\ZZ \right\}, \\
  \bigl\{ (m,n)\in\ZZ^2 : \bfF_3(m,n)=0 \bigr\} 
  &= \left\{ \left( \frac{\a k}{(\a,\g)}, \frac{-\g k}{(\a,\g)} \right) : k\in\ZZ \right\}.
\end{align*}
This yields
\begin{align}
  \label{Avgdfourierseriesz}
  \Avg_d L(x,y) - \FC(0,0)
  &=
  \frac{ \a (c,b)^2}{2 \pi^2 D d^2}
  \sideset{}{^\prime}\sum_{k\in\ZZ}
  \frac{1}{k^2} \bfe\left( \frac{d (b x +  c y)}{(c,b)}  k  \right) \notag\\
  &\qquad{}+
  \frac{ \g (a,b)^2}{2 \pi^2 D d^2}
  \sideset{}{^\prime}\sum_{k\in\ZZ}
  \frac{1}{k^2} \bfe\left( \frac{d (a x +  b y)}{(a,b)}  k  \right) \notag\\
  &\qquad{}+
  \frac{b(\a,\g)^2}{2 \pi^2 D d^2}
  \sideset{}{^\prime}\sum_{k\in\ZZ}
  \frac{1}{k^2} \bfe\left( \frac{d (\a x -\g y)}{(\a,\g)}  k  \right).
\end{align}
Using the Fourier series~\eqref{eqn:B2fourier} for~$\BB_2$ for the
three sums in~\eqref{Avgdfourierseriesz} gives the desired result.
\end{proof}

\section{Two Lower Bounds for the  Local Height}
\label{section:twolowerboundslocalheights}

In this section we prove two lower bounds for averages of the
Bernoulli part of the local height, one via Fourier averaging and one
via the pigeonhole principle. Both estimates will be used in the
proof of our Lehmer-type lower bound for the global height.  The
notation in Figure~\ref{figure:setupforlocallemmas} is used in the
statement of both lemmas.

\begin{figure}
\framebox{\begin{minipage}{0.95\linewidth} 
\begin{tabular}{cl}
  $K_v$ & \parbox[t]{0.75\linewidth}{
    a field that is complete with respect to a non-archimedean absolute value~$v$.
    } \\[5\jot]
$(A,\Theta)/K_v$ & \parbox[t]{0.75\linewidth}{
  an abelian variety~$A$ defined over~$K_v$ with an effective
  symmetric principal polarization $\Theta$, and such that~$A$ has
  totally split multiplicative reduction.
  } \\[9\jot]
$(a,b,c)$ & \parbox[t]{0.75\linewidth}{
  a normalized period valuation triple for $A/K_v$, i.e., if the period matrix is~$\bfq$,
  then \\
  \hspace*{2em} $a=v(q_{11}),\; b=v(q_{12})=v(q_{21}),\; c=v(q_{22}).$ 
  } \\[9\jot]
$D$ & ${}=ac-b^2$. \\
$d$ & \parbox[t]{0.65\linewidth}{
    a positive integer satisfying \\
     \hspace*{3em} $d \equiv 0 \left(\bmod \dfrac{2D}{\gcd(a,b,c)^2} \right).$
     } \\[10\jot]
$\Sigma$ &  a finite subset of $A(K_v)$. \\
  $N$ &  ${}=\#\Sigma$.
\end{tabular}
\end{minipage}
}
\caption{Notation and Setup for Lemmas \ref{lemma:fourieravgbound} and
  \ref{lemma:pigeonholebound}.}
\label{figure:setupforlocallemmas}
\end{figure}

\subsection{A Local Height Lower Bound via Fourier Averaging}
\label{section:localhtbdviafourier}

The main result of this section is an abelian surface analogue of the
elliptic curve result~\cite[Proposition~1.2]{hindrysilverman:lehmer}.
In order to handle the fact that for abelian surfaces, many of the
Fourier coefficients of the Bernoulli-part of the local height are
negative, the proof includes an average over~$d$-torsion points that
eliminates the negative coefficients. For our eventual application to
Lehmer-type height bounds, it is crucial that the value of~$d$ does
not change when the base field is replaced by a (ramified) extension.

\begin{lemma}
\label{lemma:fourieravgbound}
With notation as in Figure~$\ref{figure:setupforlocallemmas}$, we have
\footnote{We recall that although~$\lhat_{\ThetaDivisor,v}$
  is only defined on the complement of the support of its associated
  divisor, we can extend~$\lhat_{\ThetaDivisor,v}^\Bern$ to all
  of~$A(K_v)$. See Remark~\ref{remark:lhatberndefeverywhere}.}
\begin{multline*}
\smash[b]{ \AvgL_{\substack{P,Q\in\Sigma\\P\ne Q\\}} \AvgL_{T\in A[d]} }\; \lhat_{\ThetaDivisor,v}^\Bern(P-Q+T)  \\*
  \ge \frac{1}{24d^2} \left(
  \frac{\a+\g+b}{D} - \frac{\a(c,b)^2+\g(a,b)^2+b(\a,\g)^2}{D(N-1)} \right).
\end{multline*}
\end{lemma}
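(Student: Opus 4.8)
Here is the approach I would take.

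The plan is to convert the double torsion average on the left into an averaged value of the periodic quadratic form $L_Q$, apply the explicit Fourier formula of Corollary~\ref{corollary:avglambdaberntobern2}, and then bound each of the three resulting second-Bernoulli sums by the Fej\'er-type estimate of Lemma~\ref{lemma:avg2ndBernpoly}. First I would set up coordinates via the analytic uniformization $\Pcal$: for $P\in A(K_v)$ pick a lift $\bfu_P\in\GG_m^2(K_v)$ with $\Pcal(\bfu_P)=P$ and put $\bfx_P=Q^{-1}v(\bfu_P)$, so that $\bfx_P$ is well defined in $\RR^2/\ZZ^2$, the map $P\mapsto\bfx_P$ is a homomorphism (whence $\bfx_{P-Q}=\bfx_P-\bfx_Q$), and $Q\bfx_P=v(\bfu_P)\in\ZZ^2$. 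By Proposition~\ref{proposition:lhatDvBLQxminusLhatQ}, with $T_0\in A[2]$ as in Theorem~\ref{theorem:lDlDIlDBkk}(a) and using Remark~\ref{remark:lhatberndefeverywhere} to evaluate $\lhat^\Bern$ everywhere, we have $\lhat_{\ThetaDivisor,v}^\Bern(R)=\tfrac14 L_Q(\bfx_R-\bfx_{T_0})-\tfrac14\FC_Q(\bfzero)$ for all $R\in A(\Kbar_v)$. The congruence $d\equiv0\pmodintext{2D/\gcd(a,b,c)^2}$ makes $d$ even, so $T_0\in A[d]$; re-indexing the inner average by $T\mapsto T+T_0$ removes the $T_0$-shift, and the torsion average collapses to an average of $L_Q$ over the finite grid swept out by $\bfx_T$, $T\in A[d]$. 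Matching this (using the definition of $\bfx$) with the $\Avg_d$ of Definition~\ref{definition:Avg} gives
\[
\AvgL_{T\in A[d]}\lhat_{\ThetaDivisor,v}^\Bern(P-Q+T)=\tfrac14(\Avg_d L_Q)(\bfx_P-\bfx_Q)-\tfrac14\FC_Q(\bfzero).
\]

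Next I would feed in Corollary~\ref{corollary:avglambdaberntobern2}, whose congruence hypothesis on $d$ is exactly the one in Figure~\ref{figure:setupforlocallemmas}. Since $\FC_Q(\bfzero)=\FC(0,0)$ the constant terms cancel, and using the linearity of the three linear forms together with the $\ZZ$-periodicity of $\BB_2$ we are left with
\[
\AvgL_{T\in A[d]}\lhat_{\ThetaDivisor,v}^\Bern(P-Q+T)=\sum_{i=1}^{3}\frac{c_i}{4Dd^2}\,\BB_2\bigl(\ell_i(\bfx_P)-\ell_i(\bfx_Q)\bigr),
\]
where $(c_1,\ell_1)=\bigl(\a(c,b)^2,\ \tfrac{d(bx+cy)}{(c,b)}\bigr)$, $(c_2,\ell_2)=\bigl(\g(a,b)^2,\ \tfrac{d(ax+by)}{(a,b)}\bigr)$, and $(c_3,\ell_3)=\bigl(b(\a,\g)^2,\ \tfrac{d(\a x-\g y)}{(\a,\g)}\bigr)$. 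The point I need for the next step is that each $\ell_i$ is integer-valued on the $\bfx_P$: since $Q\bfx_P=v(\bfu_P)\in\ZZ^2$, the quantities $ax_P+by_P$ and $bx_P+cy_P$ are the two coordinates of $Q\bfx_P$ and $\a x_P-\g y_P$ is their difference, so $\ell_1(\bfx_P)\in\tfrac1{(c,b)}\ZZ$, $\ell_2(\bfx_P)\in\tfrac1{(a,b)}\ZZ$, and $\ell_3(\bfx_P)\in\tfrac1{(\a,\g)}\ZZ$ for every $P\in\Sigma$.

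Then I would average over ordered pairs $P\ne Q$ in $\Sigma$ and treat the three sums separately, applying Lemma~\ref{lemma:avg2ndBernpoly} to each with $R$ equal to $(c,b)$, $(a,b)$, $(\a,\g)$ respectively and $N=\#\Sigma$. The proof of that lemma uses only the nonnegativity of $\bigl|\sum_{P}\bfe\bigl(k\ell_i(\bfx_P)\bigr)\bigr|^{2}$ and never the distinctness of the $\ell_i(\bfx_P)$, so it applies verbatim to the multiset $\{\ell_i(\bfx_P):P\in\Sigma\}$ and yields
\[
\AvgL_{\substack{P,Q\in\Sigma\\ P\ne Q}}\BB_2\bigl(\ell_i(\bfx_P)-\ell_i(\bfx_Q)\bigr)\ \ge\ \frac1{6R_i^2}-\frac1{6(N-1)}.
\]
Here the normalization $c\ge a\ge2b\ge0$ enters decisively: it forces $\a=a-b>0$, $\g=c-b>0$, and $b\ge0$, so all three coefficients $c_i/(4Dd^2)$ are $\ge0$ and the bounds above may be substituted. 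Doing so, the main terms $\tfrac{c_i}{4Dd^2}\cdot\tfrac1{6R_i^2}$ collapse to $\tfrac{\a}{24Dd^2}$, $\tfrac{\g}{24Dd^2}$, $\tfrac{b}{24Dd^2}$, the error terms sum to $-\tfrac{\a(c,b)^2+\g(a,b)^2+b(\a,\g)^2}{24Dd^2(N-1)}$, and adding the three contributions gives exactly the asserted inequality.

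Granting Corollary~\ref{corollary:avglambdaberntobern2}, the argument is mostly bookkeeping, but I expect the delicate points to be these. The conceptually essential one---the reason Fourier averaging succeeds for surfaces at all---is that the raw Fourier coefficients of $\lhat_{\ThetaDivisor,v}^\Bern$ change sign, so Fej\'er-kernel positivity cannot be applied directly to $L_Q$; averaging over $A[d]$ cures this only because Corollary~\ref{corollary:avglambdaberntobern2} expresses the average as $\FC(0,0)$ plus three genuine $\BB_2$'s whose coefficients $\a(c,b)^2$, $\g(a,b)^2$, $b(\a,\g)^2$ are nonnegative \emph{precisely because $(a,b,c)$ has been put in normalized form}. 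The remaining care is in the reduction: absorbing the $2$-torsion translation $T_0$ (which uses that $d$ is even), correctly identifying the $A[d]$-torsion average of $L_Q\circ\bfx$ with the grid-average $\Avg_d$ of Definition~\ref{definition:Avg}, and checking that each $\ell_i$ takes values in $\tfrac1{R_i}\ZZ$ so that Lemma~\ref{lemma:avg2ndBernpoly} applies with the correct denominators $(c,b)$, $(a,b)$, $(\a,\g)$.
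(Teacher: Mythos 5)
Your proof is correct and follows essentially the same route as the paper's: reduce to the divisor of $\theta$ by absorbing $T_0$ into the $A[d]$-average, invoke Proposition~\ref{proposition:lhatDvBLQxminusLhatQ} and Corollary~\ref{corollary:avglambdaberntobern2} to express the inner average as a sum of three weighted periodic Bernoulli terms, check that each linear form takes values in $\frac{1}{R_i}\ZZ$ because $Q\bfx_P\in\ZZ^2$, and apply Lemma~\ref{lemma:avg2ndBernpoly} to each. One point you handled more carefully than the paper does: Lemma~\ref{lemma:avg2ndBernpoly} is stated for a set of \emph{distinct} rationals, whereas the values $\ell_i(\bfx_P)$ for $P\in\Sigma$ may well coincide; you correctly observed that the proof of that lemma only uses $\bigl|\sum_P\bfe(k\ell_i(\bfx_P))\bigr|^2\ge 0$ and so works for the multiset, which is exactly what the application requires.
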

\begin{proof}
We first note that since we are averaging over~$d$-torsion points
and~$d$ is even, we may as well replace the principal
polarization~$\ThetaDivisor$ with the divisor of~$\theta(\bfu,\bfq)$,
since they differ by a $2$-torsion point that will disappear when we
take the average; cf.\ Theorem~\ref{theorem:lDlDIlDBkk}.
\par
An important observation is that for any point~$P$, the
vector~$(x_P,y_P)$ is given by the coordinates of~$P$ in the
group~$\ZZ^2/A\ZZ^2$ relative to the basis given by the columns of the
matrix~$A=\SmallMatrix{a&b\\b&c\\}$. Thus
\begin{multline}
  \label{eqn:xPyPabcduPvP}
  \begin{pmatrix}  x_P \\ y_P \\ \end{pmatrix}
  =
  \begin{pmatrix}  a&b\\ b&c\\ \end{pmatrix}^{-1}
  \begin{pmatrix}  u_P \\ v_P \\ \end{pmatrix}
  =
  \frac{1}{D}
  \begin{pmatrix}  c&-b\\ -b&a\\ \end{pmatrix}
  \begin{pmatrix}  u_P \\ v_P \\ \end{pmatrix} \\
  \quad\text{for some $u_p,v_P\in\ZZ$.}
\end{multline}
This yields the useful formulas
\begin{equation}
  \label{eqn:bxcyvaxbyu}
  bx_P+cy_P=v_P,\quad
  ax_P+by_P=u_p,\quad
  \a x_P-\g y_P=u_P-v_P.
\end{equation}
We also note that for any points~$P$ and~$Q$, we have
\begin{equation}
  \label{eqn:xPminusQxPminusxQ}
  x_{P-Q}\equiv x_P-x_Q\pmodintext{\ZZ} \quad\text{and}\quad y_{P-Q}\equiv y_P-y_Q\pmodintext{\ZZ}.
\end{equation}

To ease notation, we drop the~$\gcd$ from the notation~$\gcd(a,b)$.
We compute
\begin{align*}
  \AvgL_{\substack{P,Q\in\Sigma\\P\ne Q\\}} & \AvgL_{T\in A[d]}  \; 4 \lhat_{\ThetaDivisor,v}^\Bern(P-Q+T) \\*
  &= \smash[b]{ \AvgL_{\substack{P,Q\in\Sigma\\P\ne Q\\}} }
  \Avg_{d} \Bigl( L(x_{P-Q},y_{P-Q}) - \FC(0,0) \Bigr)  \\*
  &\omit\hfill\quad\text{from Proposition~\ref{proposition:lhatDvBLQxminusLhatQ},} \\
  &= \smash[b]{ \frac{1}{N^2-N}  \sum_{\substack{P,Q\in\Sigma\\P\ne Q\\}} }
  \biggl\{
  \frac{ \a (c,b)^2}{ D d^2} \BB_2\left( \frac{d (b x_{P-Q} +  c y_{P-Q})}{(c,b)} \right) \\*
  &\hspace{8em}
  +\frac{ \g (a,b)^2}{ D d^2} \BB_2\left( \frac{d (a x_{P-Q} +  b y_{P-Q})}{(a,b)}  \right) \\*
  &\hspace{8em}
  +\frac{ b(\a,\g)^2}{ D d^2} \BB_2\left( \frac{d (\a x_{P-Q} -\g y_{P-Q})}{(\a,\g)} \right)
  \biggr\} \\*
  &\omit\hfill from Corollary~\ref{corollary:avglambdaberntobern2}, \\
  &= \frac{ \a (c,b)^2}{ D d^2} \frac{1}{N^2-N} \sum_{\substack{P,Q\in\Sigma\\P\ne Q\\}}
  \BB_2\left( \frac{d (bx_P+cy_P) - d(bx_Q+cy_Q)}{(c,b)} \right) \\*
  &+\frac{ \g (a,b)^2}{ D d^2} \frac{1}{N^2-N} \sum_{\substack{P,Q\in\Sigma\\P\ne Q\\}}
  \BB_2\left( \frac{d(ax_P+by_P) - d(ax_Q+by_Q)}{(a,b)}  \right) \\*
  &+\frac{ b(\a,\g)^2}{ D d^2} \smash[b]{ \frac{1}{N^2-N} \sum_{\substack{P,Q\in\Sigma\\P\ne Q\\}} }
  \BB_2\left( \frac{d(\a x_P-\g y_P) - d(\a x_Q-\g y_Q)}{(\a,\g)} \right) \\*
  &\omit\hfill from \eqref{eqn:xPminusQxPminusxQ}, \\
  &= \frac{ \a (c,b)^2}{ D d^2} \frac{1}{N^2-N} \sum_{\substack{P,Q\in\Sigma\\P\ne Q\\}}
  \BB_2\left( \frac{d (v_P - v_Q)}{(c,b)} \right) \\*
  &+\frac{ \g (a,b)^2}{ D d^2} \frac{1}{N^2-N} \sum_{\substack{P,Q\in\Sigma\\P\ne Q\\}}
  \BB_2\left( \frac{d(u_P-u_Q)}{(a,b)}  \right) \\*
  &+\frac{ b(\a,\g)^2}{ D d^2} \frac{1}{N^2-N}
  \smash[b]{ \sum_{\substack{P,Q\in\Sigma\\P\ne Q\\}} }
  \BB_2\left( \frac{d\bigl( (u_P-v_P)-(u_Q-v_Q)\bigr)}{(\a,\g)} \right) \\*
  &\omit\hfill from \eqref{eqn:bxcyvaxbyu}, \\
  &\ge \frac{ \a (c,b)^2}{ D d^2}
    \cdot\frac{1}{6}\left( \frac{1}{(c,b)^2} - \frac{1}{N-1}  \right) \\*
  &+\frac{ \g (a,b)^2}{ D d^2}
  \cdot\frac{1}{6}\left( \frac{1}{(a,b)^2} - \frac{1}{N-1}  \right) \\*
  &+\frac{ b(\a,\g)^2}{ D d^2}
  \cdot\frac{1}{6}\left( \frac{1}{(\a,\g)^2} - \frac{1}{N-1}  \right) \\*
  &\omit\hfill from Lemma \ref{lemma:avg2ndBernpoly}, since $d,u_P,v_P\in\ZZ$.
\end{align*}
A little bit of algebra yields the desired result, which concludes the
proof of Lemma~\ref{lemma:fourieravgbound}.
\end{proof}

\subsection{A Local Height Lower Bound via the Pigeonhole Principle}
\label{section:pigeonholdbound}

The main result of this section is an analogue for abelian surfaces
of~\cite[Lemma~4]{MR747871}
and~\cite[Proposition~1.3]{hindrysilverman:lehmer}. However, the proof
is intrinsically more complicated than in the case of elliptic curves,
since it relies on a lower bound for the average of the local height over
a carefully chosen set of torsion points, and that lower bound
ultimately relies on the explicit Fourier expansion of the periodic
quadratic form given in Theorem~\ref{theorem:fourierexpansionofL}.

\begin{lemma}
\label{lemma:pigeonholebound}
With notation as in Figure~$\ref{figure:setupforlocallemmas}$, there
exists a subset~$\Sigma'\subseteq\Sigma$ containing
\[
\#\Sigma' \ge  6^{-3} \#\Sigma
\]
elements such that for all  distinct $P,Q\in\Sigma'$ we have
\[
  \Avg_d \lhat_{\ThetaDivisor,v}^\Bern(P-Q) \ge \frac{ \a (c,b)^2 + \g (a,b)^2 + b(\a,\g)^2}{144 D d^2}.
\]
\end{lemma}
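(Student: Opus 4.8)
The plan is to isolate, from the computation already carried out in the proof of Lemma~\ref{lemma:fourieravgbound}, the per-pair identity that for any two points $P,Q\in A(K_v)$,
\begin{multline*}
\Avg_d\, 4\lhat_{\ThetaDivisor,v}^\Bern(P-Q)
= \frac{\a(c,b)^2}{Dd^2}\,\BB_2\!\left(\frac{d(v_P-v_Q)}{(c,b)}\right)
+ \frac{\g(a,b)^2}{Dd^2}\,\BB_2\!\left(\frac{d(u_P-u_Q)}{(a,b)}\right) \\
+ \frac{b(\a,\g)^2}{Dd^2}\,\BB_2\!\left(\frac{d\bigl((u_P-v_P)-(u_Q-v_Q)\bigr)}{(\a,\g)}\right),
\end{multline*}
where $u_P,v_P\in\ZZ$ are as in~\eqref{eqn:xPyPabcduPvP}, well defined modulo $\ZZ$-linear combinations of the columns $(a,b)$ and $(b,c)$ of the period matrix; consequently $v_P\bmod(c,b)$, $u_P\bmod(a,b)$ and $(u_P-v_P)\bmod(\a,\g)$ are well defined, so the three arguments of $\BB_2$ above are well defined in $\RR/\ZZ$. (The replacement of $\ThetaDivisor$ by $\ThetaDivisorT$ is harmless here, exactly as at the start of the proof of Lemma~\ref{lemma:fourieravgbound}, since $d$ is even.) Since the triple $(a,b,c)$ is normalized, one has $\a,\g>0$ and $b\ge0$, so the coefficients $\a(c,b)^2,\g(a,b)^2,b(\a,\g)^2$ are all $\ge0$; hence it suffices to find a subset $\Sigma'\subseteq\Sigma$ with $\#\Sigma'\ge6^{-3}\#\Sigma$ on which all three $\BB_2$-values are $\ge\tfrac1{36}$, because then the right-hand side above is $\ge\frac{1}{36Dd^2}\bigl(\a(c,b)^2+\g(a,b)^2+b(\a,\g)^2\bigr)$, and dividing by $4$ yields the claim.

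The one numerical input I need about $\BB_2$ is that, since $\BB_2(t)=t^2-t+\tfrac16$ on $[0,1]$ is the upward parabola with vertex $t=\tfrac12$ and $\BB_2(\tfrac16)=\BB_2(\tfrac56)=\tfrac1{36}$, we have $\BB_2(t)\ge\tfrac1{36}$ whenever the distance from $t$ to the nearest integer is at most $\tfrac16$. So the goal becomes: choose $\Sigma'$ so that for all distinct $P,Q\in\Sigma'$ the three rationals $\dfrac{d(v_P-v_Q)}{(c,b)}$, $\dfrac{d(u_P-u_Q)}{(a,b)}$, $\dfrac{d\bigl((u_P-v_P)-(u_Q-v_Q)\bigr)}{(\a,\g)}$ all lie within $\tfrac16$ of $\ZZ$.

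This is a threefold nested pigeonhole argument. Partition $\RR/\ZZ$ into the six arcs $\bigl[\tfrac{j}{6},\tfrac{j+1}{6}\bigr)$, $0\le j\le5$. Grouping the points of $\Sigma$ according to which arc contains $\tfrac{dv_P}{(c,b)}\bmod\ZZ$ produces, by pigeonhole, a subset $\Sigma_1\subseteq\Sigma$ with $\#\Sigma_1\ge\tfrac16\#\Sigma$ on which $\tfrac{dv_P}{(c,b)}\bmod\ZZ$ lies in a single length-$\tfrac16$ arc; repeating inside $\Sigma_1$ with $\tfrac{du_P}{(a,b)}\bmod\ZZ$ gives $\Sigma_2\subseteq\Sigma_1$ with $\#\Sigma_2\ge\tfrac16\#\Sigma_1$, and once more inside $\Sigma_2$ with $\tfrac{d(u_P-v_P)}{(\a,\g)}\bmod\ZZ$ gives $\Sigma'=\Sigma_3$ with $\#\Sigma'\ge\tfrac16\#\Sigma_2\ge6^{-3}\#\Sigma$. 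For distinct $P,Q\in\Sigma'$ each of the three relevant differences is a difference of two elements of a common arc of length $\tfrac16$, hence lies within $\tfrac16$ of $\ZZ$, so each corresponding $\BB_2$-value is $\ge\tfrac1{36}$, and the proof is complete.

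I do not expect a serious obstacle. The only point requiring genuine care is the well-definedness bookkeeping in the first paragraph: one must check that the gcd-reductions $v_P\bmod(c,b)$, $u_P\bmod(a,b)$, $(u_P-v_P)\bmod(\a,\g)$ do not depend on the choice of lattice representative of $P$, which comes down to noting that the columns of the period matrix are $(a,b)$ and $(b,c)$ (so their difference is $(\a,-\g)$), together with confirming $\a,\g>0$ and $b\ge0$ for a normalized triple so that the three weights are nonnegative. Everything else is the elementary $\BB_2$-inequality and three applications of the pigeonhole principle, run in the right order.
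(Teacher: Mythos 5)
Your proof is correct and takes essentially the same approach as the paper's: both reduce to the three-Bernoulli-term expansion via the $d$-averaged Fourier series (Corollary~\ref{corollary:avglambdaberntobern2}), use that $\BB_2(t)\ge\tfrac1{36}$ when $t$ lies within $\tfrac16$ of an integer, and pigeonhole to obtain a subset of density $\ge 6^{-3}$ on which all three arguments are simultaneously within $\tfrac16$ of $\ZZ$. The paper pigeonholes once into $6^3$ subcubes of $(\RR/\ZZ)^3$ whereas you nest three one-dimensional pigeonholes, which yields the identical count, and your explicit check that $\a,\g>0$ and $b\ge0$ for a normalized triple (so the three weights are nonnegative) is a point the paper uses implicitly.
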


\begin{proof}[Proof of Lemma $\ref{lemma:pigeonholebound}$]
As in the proof of Lemma~\ref{lemma:fourieravgbound}, the fact that
we're taking the~$d$-average with~$d$ even means that we may replace the
principal polarization~$\ThetaDivisor$ with the divisor
of~$\theta(\bfu,\bfq)$.
\par
We start with the formula
\begin{align}
  \label{eqn:AvgdLxy3B2st}
  \Avg_d 4 \lhat_{\ThetaDivisor,v}^\Bern(R)
  &= \Avg_d L(x_R,y_R) - \FC(0,0)
  \quad\text{from Proposition~\ref{proposition:lhatDvBLQxminusLhatQ},} \notag\\*
  &=
  \frac{ \a (c,b)^2}{ D d^2} \BB_2\left( \frac{d (b x_R +  c y_R)}{\gcd(c,b)} \right)
  \quad\text{from Corollary~\ref{corollary:avglambdaberntobern2},} \notag\\*
  &+
  \frac{ \g (a,b)^2}{ D d^2} \BB_2\left( \frac{d (a x_R +  b y_R)}{\gcd(a,b)}  \right) \notag\\*
  &+
  \frac{ b(\a,\g)^2}{ D d^2} \BB_2\left( \frac{d (\a x_R -\g y_R)}{\gcd(\a,\g)} \right).
\end{align}
\par
To ease notation, we momentarily define
\[
\|\,\cdot\,\|_\ZZ:\RR\longrightarrow \left[0,\frac12\right],\quad
\|t\|_\ZZ = \min_{n\in\ZZ} |t+n|,
\]
i.e.,~$\|t\|_\ZZ$ is the distance from~$t$ to the closest integer to~$t$.
It is easy to check that for all~$t\in\RR$, the periodic Bernoulli polynomial satisfies
\[
\|t\|_\ZZ \le\frac16
\quad\Longrightarrow\quad
\BB_2(t) \ge \frac{1}{36},
\]
since by periodicity and symmetry $\BB_2(-t)=\BB_2(t)$, it suffices to check for~$0\le{t}\le\frac16$.
Hence if~$R$ satisfies the three inequalities
\begin{equation}
  \label{eqn:3ineqforR}
  \left.
  \hspace*{4em}
  \begin{aligned}
  \left\|\dfrac{d (b x_R +  c y_R)}{(c,b)}\right\|_\ZZ&\le \dfrac16, \\
  \left\|\dfrac{d (a x_R +  b y_R)}{(a,b)}\right\|_\ZZ&\le \dfrac16, \\
  \left\|\dfrac{d (\a x_R -\g y_R)}{(\a,\g)}\right\|_\ZZ&\le \dfrac16,\\
  \end{aligned}
  \hspace*{4em}
  \right\}
\end{equation}
then each of the three Bernoulli polynomial values
appearing in~\eqref{eqn:AvgdLxy3B2st} is greater~$1/36$. This proves that
\[
\Avg_d \lhat_{\ThetaDivisor,v}^\Bern(R)
\ge
\frac{ \a (c,b)^2 + \g (a,b)^2 + b(\a,\g)^2}{36 D d^2}
\quad\text{if $R$ satisfies \eqref{eqn:3ineqforR}.}
\]
\par
We consider the map
\begin{align*}
\Sigma &\longrightarrow (\RR/\ZZ)^3,\\
R &\longmapsto
\left(\frac{d (b x_R +  c y_R)}{(c,b)},\frac{d (a x_R +  b y_R)}{(a,b)},\frac{d (\a x_R -\g y_R)}{(\a,\g)}\right).
\end{align*}
We divide the centered fundamental domain for~$(\RR/\ZZ)^3$ into~$6^3$
equally sized cubes whose sides have length~$6^{-1}$. Then the
pigeon-hole principle ensures that we can find a subset
\[
\Sigma'\subseteq\Sigma
\quad\text{with}\quad
\#\Sigma' \ge B^{-3}\#\Sigma
\]
such that the points in~$\Sigma'$ all lie in the same small cube.  It
follows that for all pairs~$P,Q\in\Sigma'$ we have
{\small
\begin{align}
\label{eqn:PQdif1}    
\left\|  \frac{d (b x_{P-Q} +  c y_{P-Q})}{(c,b)}  \right\|_\ZZ 
&=
\left\|  \frac{d (b x_P +  c y_P)}{(c,b)}  -  \frac{d (b x_Q +  c y_Q)}{(c,b)} \right\|_\ZZ  
\le \frac16,\\
\label{eqn:PQdif2}
\left\|  \frac{d (a x_{P-Q} +  b y_{P-Q})}{(a,b)}  \right\|_\ZZ 
&=
\left\|  \frac{d (a x_P +  b y_P)}{(a,b)}  -  \frac{d (a x_Q +  b y_Q)}{(a,b)} \right\|_\ZZ 
\le \frac16,\\
\label{eqn:PQdif3}
\left\|  \frac{d (\a x_{P-Q} +  \g y_{P-Q})}{(\a,\g)}  \right\|_\ZZ 
&=
\left\|  \frac{d (\a x_P +  \g y_P)}{(\a,\g)}  -  \frac{d (\a x_Q +  \g y_Q)}{(\a,\g)} \right\|_\ZZ 
\le \frac16.
\end{align}
}\ignorespaces
We note that for the three equalities
in~\eqref{eqn:PQdif1},~\eqref{eqn:PQdif2} and~\eqref{eqn:PQdif3}, we
are using the fact that the quantities~$x_{P-Q}$ and~$y_{P-Q}$ are
multiplied by integers. This combined with the fact that~$x_{P-Q}$
and~$y_{P-Q}$ satisfy
\[
x_{P-Q}\equiv x_P-x_Q\pmodintext{\ZZ} \quad\text{and}\quad y_{P-Q}\equiv y_P-y_Q\pmodintext{\ZZ}
\]
and the fact that we are using the norm on~$\RR/\ZZ$ justifies the
equalities.  Thus all differences of points in~$\Sigma'$
satisfy~\eqref{eqn:3ineqforR}, which completes the proof of
Lemma~\ref{lemma:pigeonholebound}.
\end{proof}

\section{A Bound for Small Differences Lying on $\Theta$}
\label{section:diffsontheta}

As noted earlier, the Bernoulli part of the local
height~$\lhat_{\ASD,v}^\Bern$ is defined at every point, but the
intersection part~$\lhat_{\ASD,v}^\Int$ is defined only away from the
support of the associated divisor~$\ASD$.  That means that if we want
to use the local-global decomposition of the global
height~$\hhat_\ASD$ described in
Theorem~\ref{theorem:neronfncexist}(h), we must restrict to points
lying in the complement~$A(\Kbar)\setminus|\ASD|$ of the support
of~$\ASD$. However, since ulimately we want to study points of small
height, it will suffice to use the following lemma, whose proof relies
on Ullmo and Zhang's proof of the Bogomolov conjecture.

\begin{lemma}
\label{lemma:diffsontheta}
Let~$\Kbar$ be an algebraically closed field of characteristic~$0$,
let~$A/\Kbar$ be an abelian surface, let~$\Theta\subset{A}$ be an
irreducible curve of genus at least~$2$, and let~$\hhat_A$ be a
canonical height on~$A$ relative to some ample symmetric divisor.
There are constants~$\Cl[DZ]{bg1},\Cl[DZ]{bg2}>0$ that
depend only on~$A/\Kbar$,~$\Theta$, and~$\hhat_A$ so that for all finite
subsets
\begin{equation}
  \label{eqn:SiginnhPlebg1}
  \Sigma\subset
  \Theta \cap 
  \bigl\{ P \in A(\Kbar) : \hhat_A(P) \le \Cr{bg1} \bigr\}
\end{equation}
there exists a subset~$\Sigma'\subset\Sigma$ satisfying
\[
\#\Sigma' \ge \Cr{bg2} \cdot \#\Sigma
\quad\text{and}\quad
(P-Q+A_\tors)\cap\Theta=\emptyset~\text{for all distinct $P,Q\in\Sigma'$.}
\]
\end{lemma}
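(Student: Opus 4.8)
The plan is to reduce Lemma~\ref{lemma:diffsontheta} to the Bogomolov conjecture together with an elementary pigeonhole argument: the hypothesis on $\Sigma$ forces $\Theta$ to carry only finitely many points of small canonical height, after which everything is bookkeeping. First I would note that $\Theta$ is \emph{not} a torsion coset in $A$, since an irreducible curve of genus at least $2$ cannot be a translate of a proper abelian subvariety of the surface $A$ (those being points or elliptic curves). Hence the Bogomolov conjecture (Ullmo and Zhang over number fields; Gubler in the function-field setting, applicable here since $A$ has a place of totally degenerate reduction) applies to $\Theta\subset A$: there is an $\e>0$, depending only on $A/\Kbar$, $\Theta$, and $\hhat_A$, such that the set $\{P\in\Theta(\Kbar):\hhat_A(P)\le\e\}$ is not Zariski dense in the irreducible curve $\Theta$, hence is finite. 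I would then set $\Cr{bg1}=\e/4$ and write $F=\{P\in\Theta(\Kbar):\hhat_A(P)\le 4\Cr{bg1}\}$, a finite set whose cardinality $M:=\#F$ depends only on $A$, $\Theta$, $\hhat_A$.

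For the combinatorial step, observe first that any $\Sigma$ as in~\eqref{eqn:SiginnhPlebg1} lies in $\{\hhat_A\le\Cr{bg1}\}\cap\Theta\subseteq F$, so $\#\Sigma\le M$. Form the graph $G$ on vertex set $\Sigma$ in which distinct $P,Q$ are joined precisely when $(P-Q+A_\tors)\cap\Theta\ne\emptyset$. If $P,Q$ are adjacent, choose $T\in A_\tors$ with $R:=P-Q+T\in\Theta$; since $\hhat_A$ is the positive semidefinite quadratic form attached to an ample symmetric divisor, its associated bilinear form annihilates torsion and it obeys the parallelogram law, so $\hhat_A(R)=\hhat_A(P-Q)\le 2\hhat_A(P)+2\hhat_A(Q)\le 4\Cr{bg1}$, giving $R\in F$; moreover $P\in Q+R+A_\tors$, so $P$ lies in the finite set $F\cap(Q+R+A_\tors)$. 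Hence every vertex of $G$ has degree at most $M^2$, and a greedy selection yields an independent set $\Sigma'\subseteq\Sigma$ with $\#\Sigma'\ge\#\Sigma/(M^2+1)$; any such $\Sigma'$ satisfies both conclusions of the lemma, so one may take $\Cr{bg2}=1/(M^2+1)$. (Since in fact $\#\Sigma\le M$ outright, one could just as well take $\Sigma'$ to be a single point and $\Cr{bg2}=1/M$, so the graph-theoretic packaging is only cosmetic.)

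The sole real obstacle is the appeal to the Bogomolov conjecture in the first step; once $\Theta$ is known to contain only finitely many points of bounded canonical height, the remainder of the argument is soft. The one technical point needing a little care is that a difference of the relevant form $R=P-Q+T$ again lands in the finite set $F$; this rests on the height bound $\hhat_A(P-Q)\le 2\hhat_A(P)+2\hhat_A(Q)$ and the invariance $\hhat_A(R+T)=\hhat_A(R)$ under translation by torsion, both of which follow from the standard quadratic structure of the N\'eron--Tate height and the vanishing of its bilinear form on torsion elements.
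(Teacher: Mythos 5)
Your argument is technically valid for the statement as literally written, but it hinges on the hypothesis $\Sigma\subset\Theta$ in~\eqref{eqn:SiginnhPlebg1}, and that is precisely where the trouble lies. You correctly observe that this hypothesis, combined with Bogomolov, forces $\#\Sigma\le M:=\#F$ outright, making the conclusion trivial (take $\Sigma'$ a singleton). That observation is a red flag rather than a happy accident: a nontrivial lemma in a paper should not collapse this way. In fact the condition ``$\Sigma\subset\Theta$'' is never used in the paper's own proof --- only the height bound $\hhat_A(P)\le\Cr{bg1}$ for $P\in\Sigma$ enters --- and, more to the point, the lemma is invoked inside the proof of Theorem~\ref{theorem:hlen23len23} on a set $\Sigma'$ that is \emph{not} assumed to lie on $\Theta$; it is merely a set of small-height points in $A(L)$. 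So the intended (and used) statement drops the $\Theta$-membership of $\Sigma$, and with it your bound $\#\Sigma\le M$ evaporates, since $\{\hhat_A\le\Cr{bg1}\}$ can be arbitrarily large.

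Once the spurious hypothesis is dropped, your degree bound also fails, because it rests on the inference $P\in F$, which requires $P\in\Theta$. The genuine content is then exactly what the paper does and your write-up treats as cosmetic: fix $P_1\in\Sigma$; for any $P_j$ with $(P_1-P_j+A_\tors)\cap\Theta\ne\emptyset$, the parallelogram law and torsion-invariance put the witnessing point $P_1-P_j+T$ into the finite Bogomolov set $\Xi=\Theta\cap\{\hhat_A\le\Cr{bg3}\}$, and distinct torsion cosets meeting $\Theta$ contribute distinct points of $\Xi$, so at most $\nu=\max\{\#\Xi,2\}$ of the cosets $P_1-P_j+A_\tors$ can meet $\Theta$. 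Discard those $P_j$, pass to $P_2$, and iterate; after $k\approx N/(2\nu)$ rounds one has an admissible $\Sigma'=\{P_1,\dots,P_k\}$ with $\#\Sigma'\ge\#\Sigma/(2\nu)$, hence $\Cr{bg2}=1/(2\nu)$. Your remark that one should cite Gubler's function-field Bogomolov (made applicable by the totally degenerate reduction hypothesis) rather than only Ullmo--Zhang is a correct refinement worth keeping. But the core of your argument needs to be redone without the assumption $\Sigma\subset\Theta$, along the greedy lines just sketched.
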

\begin{proof}
The Bogomolov conjecture for  (curves on) abelian varieties, which was proven by
Ullmo~\cite{MR1609514} and Zhang~\cite{MR1609518}, says that there is a
constant~$\Cl[DZ]{bg3}>0$, depending only on~$A,\Theta,\hhat_A$, such
that the set
\[
\Xi = \Xi(A,\Theta,\hhat_A)
:= \Bigl( \Theta \cap \bigl\{ P\in A(\Kbar) : \hhat_A(P)\le\Cr{bg3} \bigr\}\Bigr) \quad\text{is finite.}
\]
In other words, there are a bounded number of points of~$A$ that lie
on~$\Theta$ and have small height.
\par
We set~$\Cr{bg1}=\frac14\Cr{bg3}$. Then
\begin{align*}
  P,Q\in\Sigma & \quad\text{and}\quad T\in A_\tors \quad\text{and}\quad   P-Q+T\in\Theta\\
  &\quad\Longrightarrow\quad
  \hhat_A(P-Q+T) = \hhat_A(P-Q) \le 2\hhat_A(P)+2\hhat_A(Q) \\
  &\omit\hfill parallelogram formula, \\
  &\quad\Longrightarrow\quad
  \hhat_A(P-Q+T) \le 4\,\Cr{bg1} \\
  &\omit\hfill from \eqref{eqn:SiginnhPlebg1}, since $P,Q\in\Sigma$, \\
  &\quad\Longrightarrow\quad
  \hhat_A(P-Q+T) \le \Cr{bg3},
  \quad\text{since $\Cr{bg1}=\frac14\Cr{bg3}$, } \\
  &\quad\Longrightarrow\quad
  P-Q+T \in \Xi.
\end{align*}
To ease notation, we let
\[
N = \#\Sigma \qquad\text{and}\qquad
\nu = \nu(A,\Theta,\hhat_A) := \max\bigl\{ \#\Xi, 2 \bigr\},
\]
and we let
\[
\Sigma = \{P_1,P_2,\ldots,P_N\}.
\]
\par
We build the set~$\Sigma'$ one step at a time. We first consider the
differences of~$P_1$ with the other elements of~$\Sigma$, translated by
torsion points, i.e., we consider the sets
\[
P_1-P_2+A_\tors,\;P_1-P_3+A_\tors,\;\ldots,\;P_1-P_N+A_\tors.
\]
The implication proven earlier implies that at most~$\nu=\#\Xi$ of
these sets may contain a point lying on~$\Theta$, so relabeling the elements
of~$\Sigma$, we have shown that
\begin{align*}
(P_1-P_2+A_\tors)\cap\Theta&=\emptyset,\\
(P_1-P_3+A_\tors)\cap\Theta&=\emptyset,\\
\omit\hfill$\vdots$\hfill\\
(P_1-P_{N-\nu}+A_\tors)\cap\Theta&=\emptyset.
\end{align*}
\par
We next consider the differences of~$P_2$ with the higher-indexed elements of~$\Sigma$,
again translated by torsion points,
\[
P_2-P_3+A_\tors,\; P_2-P_4+A_\tors,\;\ldots,\;P_2-P_{N-\nu}+A_\tors.
\]
As in the previous step, at most~$\nu$ of these sets contains
a point lying on~$\Theta$, so relabeling again, we have shown that
\[
(P_2-P_3+A_\tors)\cap\Theta=\emptyset,\;\ldots,\;
(P_2-P_{N-2\nu}+A_\tors)\cap\Theta=\emptyset.
\]
Continuing in this fashion, at the~$k$th step (until we run out of
points in~$\Sigma$), we will have shown that
\[
(P_k-P_{k+1}+A_\tors)\cap\Theta=\emptyset,\;\ldots,\;
(P_k-P_{N-k\nu}+A_\tors)\cap\Theta=\emptyset.
\]
This works as long as
\[
N-k\nu > k,\quad\text{and thus as long as}\quad k < \frac{N}{\nu+1}.
\]
Since~$\nu\ge2$ by assumption, we may certainly run the above
algorithm until~$k=\lceil{N/2\nu}\rceil$. Then by construction the set
\[
\Sigma' = \{P_1,P_2,\ldots,P_k\}
\]
has the property that
\[
(P_i-P_j+A_\tors) \cap \Theta = \emptyset \quad\text{for all $1\le i<j\le k$,}
\]
and the size of the set~$\Sigma'$ satisfies
\[
\#\Sigma' \ge \left\lceil\frac{N}{2\nu}\right\rceil \ge \frac{1}{2\nu}\#\Sigma.
\]
This completes the proof of Lemma~\ref{lemma:diffsontheta}
with~$\Cr{bg2}=1/2\nu$.
\end{proof}

\section{A Lehmer-Type Height Bound for Abelian Surfaces} 
\label{section:lehmerabeliansurface}

In this section we prove an unconditional, albeit somewhat technical,
lower bound for average values of the Bernoulli part of the canonical
height. We also prove a corollary giving an exponent~$2$ Lehmer-type
lower bound for the canonical height that is conditional on the
assumption that the average of the intersection part of the canonical
height is at least as large as the local-global
constant~$\kappa_\Theta$ appearing in
Theorem~\ref{theorem:neronfncexist}(h).

\begin{theorem}
\label{theorem:hlen23len23}
We set the following notation\textup:
\begin{notation}
\item[$k$]
  an algebraically closed field of characterstic~$0$.
\item[$K/k$]
  a $1$-dimensional function field.
\item[$(A,\ThetaDivisor)/K$]
  an abelian variety~$A$ defined over~$K$ with an irreducible effective symmetric
  principal polarization $\Theta\in\Div_K(A)$.
\item[$\hhat_{A,\ThetaDivisor}$]
  the canonical height on~$A$ for the divisor $\ThetaDivisor$.
\item[$\hhat_{A,\ThetaDivisor}^\Bern$]
  the Bernoulli part of the canonical height on~$A$ for the divisor $\ThetaDivisor$;
  see Definition~$\ref{definition:globalintbernhts}$.  
\end{notation}
Assume that for every place~$v$ of~$K$, the abelian variety~$A$ has
either potential good reduction at~$v$ or totally multiplicative
reduction at~$v$, and that~$A$ has at least one place of
multiplicative reduction.\footnote{For ease of exposition, we have
  excluded abelian surfaces having partial multiplicative reduction
  (surface with fibers $\Acal_v^\circ=\Ecal\rtimes\GG_m$ where~$\Ecal$
  is an elliptic curve), although we expect that these cases could be
  handled similarly. We also note that although the assumption
  that~$A$ have at least one place of potential multiplicative
  reduction is required for our proof, it is a relatively weak
  assumption. For example, if~$A/K$ has everywhere good reduction and
  is not isotrivial, then it necessarily has a non-simple
  fiber~$\Acal_v$, i.e., a fiber that is isogenous to a product of
  elliptic curves.} 
There are
constants~$\Cl[DZ]{jj1},\Cl[DZ]{jj2},\Cl[DZ]{jj3},\Cl[DZ]{jj4}>0$
and an integer~$d\ge1$ that depend only on~$A/K$ so that the following
holds\textup:
\par
For all finite extensions~$L/K$ and all sets of points
\begin{equation}
  \label{eqn:SigmainPALhPleC}
  \Sigma \subseteq \bigl\{ P \in A(L) : \hhat_{A,\ThetaDivisor}(P) \le \Cr{jj1} \bigr\},
\end{equation}
there is a subset $\Sigma_0\subseteq\Sigma$ having the following three properties\textup:
\begin{gather}
  \label{eqn:subset0geCsubset}
  \#\Sigma_0 \ge  \Cr{jj2}\cdot  \#\Sigma \\
  \label{eqn:PQTnotinThetadistPQTtors}
  P-Q+T \notin|\Theta|
  \quad\text{for all distinct $P,Q\in\Sigma_0$ and all $T\in A_\tors$.} \\
  \label{eqn:AvgPQAvgdBerngeLK23}
  \AvgL_{\substack{P,Q\in\Sigma_0\\ P\ne Q\\}}
  \AvgL_{T\in A[d]}
  \;\;
  \hhat_{A,\ThetaDivisor}^\Bern(P-Q+T)
  \ge
  \frac{\Cr{jj3}}{[L:K]^{2/3}} -  \frac{\Cr{jj4}}{\#\Sigma}.
\end{gather}
\end{theorem}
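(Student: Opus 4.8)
The plan is to derive the bound from the two local estimates of Section~\ref{section:twolowerboundslocalheights} --- the Fourier‑averaging Lemma~\ref{lemma:fourieravgbound}, which holds for any subset, and the pigeonhole Lemma~\ref{lemma:pigeonholebound}, which holds only after passing to a $6^{-3}$‑fraction of $\Sigma$ that depends on the chosen place. The point is that both must be applied to the \emph{same} $\Sigma_0$, so I will commit to the pigeonhole subset of the single most ramified place above the bad locus and then balance the two resulting inequalities; the exponent $\tfrac23$ is the outcome of an elementary optimization against that ramification index. First fix notation: let $S\subset M_K$ be the finite, nonempty set of places of multiplicative reduction; for $v\in S$ let $(a_v,b_v,c_v)$ be a normalized period valuation triple, with $D_v=a_vc_v-b_v^2$, $\alpha_v=a_v-b_v$, $\gamma_v=c_v-b_v$; and set $d=\operatorname{lcm}_{v\in S}\bigl(2D_v/\gcd(a_v,b_v,c_v)^2\bigr)$, an even integer depending only on $A/K$. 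The crucial uniformity is that, under any finite extension, at a place $w\mid v$ with ramification index $e=e(w/v)$ the normalized period valuation triple is $e\cdot(a_v,b_v,c_v)$ --- scaling the positive‑definite period valuation matrix by $e$ commutes with Gaussian reduction --- so $2D_w/\gcd(a_w,b_w,c_w)^2$ does not change, and this single $d$ is admissible in Lemmas~\ref{lemma:fourieravgbound} and~\ref{lemma:pigeonholebound} over every extension. Replacing $L$ by a finite extension of degree bounded solely in terms of $A/K$, we may also assume all multiplicative fibers split, as those lemmas require.

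Next, reduce to the local estimates. Put $L_d=L(A[d])$ and $T=[L_d:K]$, so $T\le[K(A[d]):K]\cdot[L:K]$ with the first factor depending only on $A/K$. Since $\lhat_{\Theta,w}^\Bern\equiv0$ at places of potential good reduction (Theorem~\ref{theorem:neronfncexist}(f), Definition~\ref{definition:intandbernlochts}) and the normalized local heights scale by the ramification index under extension (Theorem~\ref{theorem:neronfncexist}(d)), for any $R\in A(L_d)$ one has $\hhat_{A,\Theta}^\Bern(R)=T^{-1}\sum_{v\in S}\sum_{w\mid v}\lhat_{\Theta,w}^\Bern(R)$. Writing $\psi_w$ for the local double average $\AvgL_{P\ne Q\in\Sigma_0}\AvgL_{T'\in A[d]}\lhat_{\Theta,w}^\Bern(P-Q+T')$, the quantity to be bounded below is $\Psi:=T^{-1}\sum_{v\in S}\sum_{w\mid v}\psi_w$. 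Using the scaling of the triple, Lemma~\ref{lemma:fourieravgbound} becomes $\psi_w\ge p_v/e_w-e_wq_v/(N_0-1)$ at $w\mid v$ (here $e_w=e(w/v)$, $N_0=\#\Sigma_0$, and $p_v,q_v>0$ depend only on $A/K$, with $q_v$ a fixed multiple of $(\alpha_v(c_v,b_v)^2+\gamma_v(a_v,b_v)^2+b_v(\alpha_v,\gamma_v)^2)/D_v$ and $p_v$ a fixed multiple of $(\alpha_v+\gamma_v+b_v)/D_v$), while Lemma~\ref{lemma:pigeonholebound} becomes: there is $\Sigma_w'\subseteq\Sigma$ with $\#\Sigma_w'\ge6^{-3}\#\Sigma$ such that $\Sigma_0\subseteq\Sigma_w'$ forces $\psi_w\ge e_wq_v/6$.

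For the choice of $\Sigma_0$, pick a place $w^*\mid v^*$ above $S$ with $e^*:=e_{w^*}$ maximal (so $1\le e^*\le T$), and take $\Sigma_0\subseteq\Sigma_{w^*}'$, shrunk further by a Bogomolov‑type thinning (the argument opening the proof of Lemma~\ref{lemma:diffsontheta}, with $\Cr{jj1}=\tfrac14\Cr{bg3}$, so that $P-Q+T'\in|\Theta|$ forces $P-Q+T'\in\Xi$) to arrange property~\eqref{eqn:PQTnotinThetadistPQTtors}; this keeps $N_0\ge\Cr{jj2}\cdot\#\Sigma$ for a constant $\Cr{jj2}$ depending only on $A/K$. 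Over this $\Sigma_0$ we then have two lower bounds: applying Lemma~\ref{lemma:pigeonholebound} at $w^*$ and $\psi_w\ge-e_wq_v/(N_0-1)$ elsewhere, together with $\sum_{w\mid v}e_w=T$, gives $\Psi\ge e^*q_{v^*}/(6T)-Q/(N_0-1)$ with $Q:=\sum_{v\in S}q_v$; applying Lemma~\ref{lemma:fourieravgbound} at every $w$, retaining in the positive part only the places above $v^*$ and estimating by Cauchy--Schwarz (there are $\ge T/e^*$ places above $v^*$, each with $e_w\le e^*$, so $\sum_{w\mid v^*}1/e_w\ge T/(e^*)^2$), gives $\Psi\ge p_{v^*}/(e^*)^2-Q/(N_0-1)$. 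Hence
\[
\Psi\ \ge\ \max\Bigl(\tfrac{q_{v^*}e^*}{6T},\ \tfrac{p_{v^*}}{(e^*)^2}\Bigr)-\tfrac{Q}{N_0-1}\ \ge\ \tfrac{c_0}{T^{2/3}}-\tfrac{Q}{N_0-1},
\]
the second inequality being the elementary fact $\max(\lambda x,\mu x^{-2})\ge\lambda^{2/3}\mu^{1/3}$ for $x>0$ (the branches cross at $x_0=(\mu/\lambda)^{1/3}$), applied with $x=e^*$, $\lambda=q_{v^*}/6T$, $\mu=p_{v^*}$, and $c_0:=6^{-2/3}(\min_vp_v)^{1/3}(\min_vq_v)^{2/3}>0$; the edge cases $x_0\notin[1,T]$ occur only when $T$ is bounded in terms of $A/K$, where instead $\Psi\ge p_{v^*}/T^2-Q/(N_0-1)$ is a fixed positive constant minus a small quantity, and are absorbed into the constants. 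Since $T^{-2/3}\ge[K(A[d]):K]^{-2/3}[L:K]^{-2/3}$ and $N_0-1\ge\tfrac12\Cr{jj2}\#\Sigma$ once $\#\Sigma$ is not tiny, this yields the asserted inequality with suitable $\Cr{jj3},\Cr{jj4}$ (and it is immediate when $\#\Sigma$ or $[L:K]$ is bounded, after enlarging the constants).

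The step I expect to be the main obstacle is exactly this balancing. Because Lemma~\ref{lemma:pigeonholebound} forces $\Sigma_0$ to be chosen before the ramification of $L/K$ is known, one cannot afford to pigeonhole at more than one place, and recovering $T^{-2/3}$ in place of the $T^{-2}$ that Fourier averaging alone would give then depends on pinning down precisely how the local data $(a_v,b_v,c_v),D_v,p_v,q_v$ scale under ramified base change and on the Cauchy--Schwarz count for $\sum1/e_w$. A secondary, more bookkeeping‑type obstacle is property~\eqref{eqn:PQTnotinThetadistPQTtors}: one must check that the Bogomolov‑type thinning of Lemma~\ref{lemma:diffsontheta} is compatible with the pigeonhole selection and still retains a positive fraction of $\Sigma$ --- routine when $\Sigma$ maps to $A/A_\tors$ with bounded fibers, as in the intended application to Corollary~\ref{corollary:conditionallehmer} --- together with the preliminary reduction to totally split reduction noted above.
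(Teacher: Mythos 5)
Your argument is correct and follows essentially the same strategy as the paper: normalize to totally split multiplicative reduction, define $d$ from the ramification-invariant quantities $2D_v/\gcd(a_v,b_v,c_v)^2$, pigeonhole (Lemma~\ref{lemma:pigeonholebound}) at a single place $w^*$ of maximal ramification, thin via Lemma~\ref{lemma:diffsontheta} to secure~\eqref{eqn:PQTnotinThetadistPQTtors}, and then combine with the Fourier bound (Lemma~\ref{lemma:fourieravgbound}) at the remaining places, exploiting the homogeneity of the period-valuation data under $e_w$-scaling. The one place your route genuinely diverges is the final balancing: the paper feeds the pigeonhole and Fourier contributions above a single bad place into one combined expression $\alpha e_{w_0}+\beta\sum_{w\mid v_0,\,w\ne w_0}e_w^{-1}$ and invokes the H\"older-type Lemma~\ref{lemma:holderineq}, which carries the side condition $n^2\ge\beta/\alpha$ (disposed of in a footnote), whereas you form two \emph{separate} lower bounds (pigeonhole-only and Fourier-only), take their maximum, and use the elementary fact $\max(\lambda x,\mu x^{-2})\ge\lambda^{2/3}\mu^{1/3}$ for all $x>0$. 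Your variant yields the same exponent $2/3$ without any side constraint, at the harmless cost of discarding a nonnegative term in each branch, so the edge-case discussion you include is actually superfluous. One small correction: the estimate $\sum_{w\mid v^*}e_w^{-1}\ge T/(e^*)^2$ is a direct count (at least $T/e^*$ places, each contributing at least $1/e^*$), not Cauchy--Schwarz --- the paper's Lemma~\ref{lemma:holderineq} does use Cauchy--Schwarz, but on the product $(\sum e_i)(\sum e_i^{-1})$, which is a different bound from the one you write down.
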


\begin{corollary}
\label{corollary:conditionallehmer}
With notation as in Theorem~$\ref{theorem:hlen23len23}$, suppose that
for every finite~$L/K$ and every set of points~$\Sigma$
satisfying~\eqref{eqn:SigmainPALhPleC}, there is a
subset~$\Sigma_0\subseteq\Sigma$
satisfying
\eqref{eqn:subset0geCsubset},~\eqref{eqn:PQTnotinThetadistPQTtors},~\eqref{eqn:AvgPQAvgdBerngeLK23},
and also\footnote{We note that~\eqref{eqn:PQTnotinThetadistPQTtors}
  ensures that~$\hhat_{A,\ThetaDivisor}^\Int$ is well-defined at all
  of the~$P-Q-T$ points under consideration.}
\begin{equation}
  \label{eqn:AvgPQAvgdIntgeLK23}
  \AvgL_{\substack{P,Q\in\Sigma_0\\ P\ne Q\\}}
  \AvgL_{T\in A[d]}
  \;\;
  \hhat_{A,\ThetaDivisor}^\Int(P-Q+T)
  \ge \kappa_{\ThetaDivisor},
\end{equation}
where~$\kappa_{\ThetaDivisor}$ is the constant appearing in
Theorem~\textup{\ref{theorem:neronfncexist}(h)}.
Then every non-torsion~$P\in{A(\Kbar)}$ satisfies
\[
\hhat_{A,\ThetaDivisor}(P) \ge \frac{\Cl[DZ]{jj5}}{\bigl[K(P):K\bigr]^2}.
\]
\end{corollary}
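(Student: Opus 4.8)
The plan is the classical ``descent over multiples'' argument of Dobrowolski and Hindry--Silverman, now fed by Theorem~\ref{theorem:hlen23len23} together with the conditional hypothesis~\eqref{eqn:AvgPQAvgdIntgeLK23}. Let $P\in A(\Kbar)$ be non-torsion, set $D=[K(P):K]$ and $L=K(P)$, so $[L:K]=D$, and abbreviate $\hhat=\hhat_{A,\ThetaDivisor}$. First I would dispose of the case $\hhat(P)>c_1/D^{4/3}$ for a small constant $c_1$ to be fixed below: there $\hhat(P)>c_1/D^{4/3}\ge c_1/D^{2}$ and there is nothing to prove. So assume $\hhat(P)\le c_1/D^{4/3}$.

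Next I would run the multiples construction. Choose an integer $n$ of size $\asymp D^{2/3}$ (and bounded below by a fixed constant large enough that $\Cr{jj2}n\ge 2$), set $\Sigma=\{[m]P:1\le m\le n\}\subseteq A(L)$, and observe that $\#\Sigma=n$ (the multiples are distinct since $P$ is non-torsion) while $\hhat([m]P)=m^2\hhat(P)\le n^2\hhat(P)\le c_0^2 D^{4/3}\hhat(P)$, where $c_0$ is a constant with $n\le c_0 D^{2/3}$; choosing $c_1$ with $c_0^2 c_1\le\Cr{jj1}$ then forces $\hhat([m]P)\le\Cr{jj1}$, so $\Sigma$ meets the hypothesis~\eqref{eqn:SigmainPALhPleC} of Theorem~\ref{theorem:hlen23len23}. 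That theorem supplies $\Sigma_0\subseteq\Sigma$ with $\#\Sigma_0\ge\Cr{jj2}n\ge 2$, enjoying~\eqref{eqn:PQTnotinThetadistPQTtors} and the Bernoulli average~\eqref{eqn:AvgPQAvgdBerngeLK23}; by the corollary's standing assumption the same $\Sigma_0$ also satisfies the intersection average~\eqref{eqn:AvgPQAvgdIntgeLK23}. Since~\eqref{eqn:PQTnotinThetadistPQTtors} keeps every point $P'-Q'+T$ (for distinct $P',Q'\in\Sigma_0$ and $T\in A[d]$) off the support $|\ThetaDivisor|$, I can apply the exact local--global decomposition~\eqref{eqn:hhatsumintbernparts} of Proposition~\ref{proposition:hteqintplusbernhts} at each such point and then average over $P',Q',T$; the two occurrences of the local--global constant $\kappa_{\ThetaDivisor}=\kappa_{A,\ThetaDivisor}$ cancel, leaving
\[
\AvgL_{\substack{P',Q'\in\Sigma_0\\ P'\ne Q'}}\ \AvgL_{T\in A[d]}\ \hhat(P'-Q'+T)\ \ge\ \frac{\Cr{jj3}}{D^{2/3}}-\frac{\Cr{jj4}}{n}.
\]

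For the matching upper bound, every pair $P',Q'\in\Sigma_0$ is of the form $[i]P,[j]P$ with $|i-j|\le n-1$, and as the canonical height of a symmetric divisor is insensitive to torsion translates, $\hhat(P'-Q'+T)=(i-j)^2\hhat(P)\le n^2\hhat(P)$; hence the displayed average is at most $n^2\hhat(P)$. Comparing the two bounds gives $n^2\hhat(P)\ge \Cr{jj3}/D^{2/3}-\Cr{jj4}/n$, and here the choice $n\asymp D^{2/3}$ pays off twice: taking $n\ge (2\Cr{jj4}/\Cr{jj3})D^{2/3}$ makes $\Cr{jj4}/n\le\tfrac12\Cr{jj3}/D^{2/3}$, whence $n^2\hhat(P)\ge\tfrac12\Cr{jj3}/D^{2/3}$, and then $n\le c_0 D^{2/3}$ yields $\hhat(P)\ge \Cr{jj3}/(2c_0^2 D^{2})$. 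Combined with the first-paragraph reduction this gives $\hhat(P)\ge\Cr{jj5}/[K(P):K]^2$ with $\Cr{jj5}=\min\bigl(c_1,\ \Cr{jj3}/(2c_0^2)\bigr)$. The one genuine decision in the argument is the exponent in $n\asymp D^{2/3}$, and it is forced: $n$ must be $\gtrsim D^{2/3}$ in order to absorb the $\Cr{jj4}/n$ error in~\eqref{eqn:AvgPQAvgdBerngeLK23}, yet the output $\hhat(P)\gtrsim(n^2D^{2/3})^{-1}$ degrades as $n$ grows, so the optimum is $n\asymp D^{2/3}$, which is precisely what converts the exponent $2/3$ of Theorem~\ref{theorem:hlen23len23} into $2\cdot\tfrac23+\tfrac23=2$. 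I expect no serious obstacle beyond this balancing and the routine constant bookkeeping; all of the analytic substance is already contained in Theorem~\ref{theorem:hlen23len23}.
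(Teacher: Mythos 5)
Your proposal is correct and follows essentially the same route as the paper's own proof: take $\Sigma$ to be an arithmetic progression of multiples of $P$, apply Theorem~\ref{theorem:hlen23len23} and the hypothesis~\eqref{eqn:AvgPQAvgdIntgeLK23}, add the two average bounds so that the constant $\kappa_{\ThetaDivisor}$ cancels against the $-\kappa_{\ThetaDivisor}$ in~\eqref{eqn:hhatsumintbernparts}, compare with the trivial upper bound $\hhat(P'-Q'+T)\le(\#\Sigma)^2\hhat(P)$, and balance the number of multiples at $\asymp[K(P):K]^{2/3}$. The only cosmetic difference is that you dispose of the ``no valid cardinality'' edge case at the outset by first eliminating points with $\hhat(P)>c_1/D^{4/3}$, whereas the paper runs the main argument and addresses the degenerate interval at the end; both handle it correctly and lead to the same constants up to renaming.
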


\begin{remark}
The assumption~\eqref{eqn:AvgPQAvgdIntgeLK23} in
Corollary~\ref{corollary:conditionallehmer} says roughly that (on
average) the intersection part of the local heights, by itself, is
sufficient to compensate for the difference between the canonical
height and the sum of the local heights. It is unclear to the authors
whether this is likely to be true, but we have included it in order to
explain how the somewhat technical estimate in
Theorem~\ref{theorem:hlen23len23} can be incorporated into the proof
of a Lehmer-type estimate, as was done unconditionally for elliptic
curves in~\cite{hindrysilverman:lehmer}.
\end{remark}

\begin{proof}[Proof of Theorem~$\ref{theorem:hlen23len23}$]
We first replace~$K$ by a finite extension over which~$A$ has
everywhere good or totally multiplicative reduction, which may require
some adjustment in the constants.
We let
\[
n = [L:K].
\]
As in the statement of the theorem, all of the constants may depend
on~$A/K$, but they are independent of~$L$,~$n$ and~$P\in{A(L)}$.  We
let
\[
S = \{v\in M_K : \text{$A$ has bad reduction at $v$} \}.
\]
For each~$v\in{S}$ we fix a uniformization
\[
\GG_m^2(\Kbar_v)\longrightarrow A(\Kbar_v)
\]
with kernel spanned (multiplicatively) by the columns of the matrix
\[
\bfq_v = 
\begin{pmatrix}
  q_{v,11} & q_{v,12} \\ q_{v,21} & q_{v,22} \\
\end{pmatrix}
\]
whose associated $\ThetaFunction$-function has divisor that is a
translation of~$\Theta$ be a $2$-torsion point.  The valuation matrix
\[
Q_v = v(\bfq_v) = 
\begin{pmatrix}
  v(q_{v,11}) & v(q_{v,12}) \\ v(q_{v,21}) & v(q_{v,22}) \\
\end{pmatrix}
=
\begin{pmatrix}
  a_v & b_v \\ b_v & c_v \\
\end{pmatrix}
\]
is symmetric and positive-definite. As usual, we let
\[
\a_v=a_v-b_v\quad\text{and}\quad \g_v=c_v-b_v.
\]
After a change of basis
as described in Lemma~\ref{lemma:quadformwbpositive}, we may assume that
the triple is normalized, and thus that
\[
D_v = a_vc_v-b_v^2 > 0
\quad\text{and}\quad
0\le 2b_v\le a_v\le c_v.
\]

To ease notation, we define two functions on~$\ZZ^3$, where
we note that the expressions~$\abcFunctionOne(a,b,c)$
and~$\abcFunctionThree(a,b,c)$ are the quantities appearing in both
Lemma~\ref{lemma:fourieravgbound} and
Lemma~\ref{lemma:pigeonholebound}:
\begin{align}
  \abcFunctionThree(a,b,c)
  &= \dfrac{D}{\gcd(a,b,c)^2}.
  \label{eqn:abcFunctionThree}  \\
  \abcFunctionOne(a,b,c) 
  &= \dfrac{\a\gcd(c,b)^2 + \g\gcd(a,b)^2 + b\gcd(\a,\g)^2}{D}.
  \label{eqn:abcFunctionOne}
\end{align}
For the proof of Theorem~\ref{theorem:hlen23len23}, it is crucial to
observe that these functions satisfy the homogeneity formulas
\[
\abcFunctionOne(ea,eb,ec)=e\abcFunctionOne(a,b,c)
\quad\text{and}\quad
\abcFunctionThree(ea,eb,ec)=\abcFunctionThree(a,b,c),
\]
since these homogeneity properties allow us to control the height
bounds as for ramified extensions~$L_w/K_v$.
\par
For~$w\in{M_L}$ with~$w\mid{v}$, we denote the ramification index
of~$w/v$ by~$e_w$, so~$w|_K=e_wv$. In particular, the valuations of
the multiplicative periods of~$A$ are multiplied by~$e_w$ when we move
from~$K$ to~$L$. Thus for places~$v$ of bad reduction, we have
\begin{equation}
  \label{eqn:aweqavetc}
  \left.
  \begin{aligned}
    a_w = e_w a_v,\quad b_w &= e_w b_v,\quad c_w = e_w c_v, \\
    \a_w = e_w \a_v,\quad \g_w &= e_w g_v, \\
    D_w = a_wc_w-b_w^2 &= e_w^2 D_v, \\
    \abcFunctionThree(a_w,b_w,c_w) &= \abcFunctionThree(a_v,b_v,c_v), \\
    \abcFunctionOne(a_w,b_w,c_w) &= e_w\abcFunctionOne(a_v,b_v,c_v). \\
   \end{aligned}
  \right\}
\end{equation}
We define the integer~$d$ by the formula
\[
d = 2 \LCM \bigl\{ \abcFunctionThree(a_v,b_v,c_v) : v \in S \bigr\}.
\]
We note that~$d$ depends only on~$A/K$, i.e., it is independent of the
extension field~$L/K$. We may thus replace~$L$ with the compositum
of~$L$ and~$K\bigl(A[d]\bigr)$, at the potential cost of
multiplying~$n=[L:K]$ be up to~$d^4$. Since~$d$ depends only on~$A/K$,
this requires only an adjustment of various constants. We henceforth
assume that
\[
A[d] \subset A(L).
\]
\par
We choose a place~$v_0\in{M_K}$ such that the fiber of the N\'eron
model of~$A$ is a torus, i.e., $\Acal_{v_0}(k)\cong\GG_m^2(k)$.
(By assumption, there is at least one such place.) Then
among the~$w\in{M_L}$ lying over~$v_0$, we choose~$w_0$ to have largest
ramfication index, i.e.,
\[
e_{w_0} = \max\{ e_w : w\in M_L,\,w\mid v \}.
\]
We also let
\[
M_{A/K}^\bad = \{v\in M_K : \text{$A$ has bad reduction at $v$} \},
\]
and similarly for~$M_{A/L}^\bad$.
\par

Let~$\Sigma$ be a set satisfying~\eqref{eqn:SigmainPALhPleC}.  We
start by applying Lemma~\ref{lemma:pigeonholebound}
to~$\Sigma\subset{A(L)}\subset{A(L_{w_0})}$ to find a
subset~$\Sigma'\subseteq\Sigma$ satisfying
\begin{equation}
  \label{eqn:NSig6n3S}
  \#\Sigma' \ge  6^{-3} \#\Sigma
\end{equation}
and such that for all  distinct $P,Q\in\Sigma'$ we have
\begin{equation}
  \label{eqn:avgdinlehpf}
  \Avg_d \lhat_{\ThetaDivisor,w_0}^\Bern(P-Q) \ge
  \frac{ \abcFunctionOne(a_{w_0},b_{w_0},c_{w_0})}{144 d^2}
  = \frac{ e_{w_0}\abcFunctionOne(a_{v_0},b_{v_0},c_{v_0})}{144 d^2}.
\end{equation}
\par
We next apply Lemma~\ref{lemma:diffsontheta} to the set~$\Sigma'$ to
find a subset~$\Sigma_0\subseteq\Sigma'$ satisfying\footnote{If we
  only want the lower bound on the Bernoulli part of the height, it is
  not necessary to use Lemma~\ref{lemma:diffsontheta}, since the
  Bernoulli part of the height is defined on all of~$A$. However, any
  application to the global height will need to also include the
  intersection part of the height, which is not defined on the support
  of~$\ThetaDivisor$.}
\begin{equation}
  \label{eqn:sigmaprimeprime}
  N := \#\Sigma_0 \ge \Cr{bg2} \cdot \#\Sigma'
\end{equation}
and
\begin{equation}
  \label{eqn:pminusqplustnotintheta}
  P-Q+T \notin|\Theta|
  \quad\text{for all distinct $P,Q\in\Sigma_0$ and all $T\in A_\tors$.}
\end{equation}
\par
We now estimate the double average~\eqref{eqn:AvgPQAvgdBerngeLK23} for
the set~$\Sigma_0$ and the integer~$d$.  We note
that~\eqref{eqn:pminusqplustnotintheta} ensures that the
points~$P-Q+T$ appearing in this calculation do not lie on the
divisor~$\Theta$, and thus the local heights are well-defined at all
such points.  Thus
\begin{align}
  \label{eqn:AvgAvglBern}
  \AvgL_{\substack{P,Q\in\Sigma_0\\ P\ne Q\\}}  \AvgL_{T\in A[d]} &
  \;\;  \hhat_{A,\ThetaDivisor}^\Bern(P-Q+T)  \notag\\
  &=
  \AvgL_{\substack{P,Q\in\Sigma_0\\ P\ne Q\\}} \AvgL_{T\in A[d]}
  \sum_{w\in M_{A/L}^\bad} \frac{1}{n} \l^\Bern_{\ThetaDivisor,w}(P-Q+T) \notag\\
  &=
  \sum_{w\in M_{A/L}^\bad} \frac{1}{n}
  \AvgL_{\substack{P,Q\in\Sigma_0\\ P\ne Q\\}} \AvgL_{T\in A[d]} \l^\Bern_{\ThetaDivisor,w}(P-Q+T).
\end{align}
We split the sum in~\eqref{eqn:AvgAvglBern} into three pieces:
\begin{parts}
  \Part{(1)}
  For the absolute value~$w_0$, we use the lower bound from
  Lemma~\ref{lemma:pigeonholebound}.
  \Part{(2)}
  For the absolute values~$w$ dividing~$v_0$ that are not equal
  to~$w_0$, we use the lower bound provided by the full strength of
  Lemma~\ref{lemma:fourieravgbound}.
  \Part{(3)}
  For the absolute values~$w$ with $w\in{M_{A/L}^\bad}$ that do not
  divide~$v_0$, we again use Lemma~\ref{lemma:fourieravgbound},
  but we discard the positive contribution coming from the~$1/D^2$
  terms.
\end{parts}
Carrying out these three estimates yields the following:
\begin{align}
  (1)\quad &
  \frac{1}{n}  \AvgL_{\substack{P,Q\in\Sigma_0\\ P\ne Q\\}} \AvgL_{T\in A[d]} \l^\Bern_{\ThetaDivisor,w_0}(P-Q+T) \notag\\
  &\quad{}\ge \frac{1}{n} \cdot
  \frac{ e_{w_0}\abcFunctionOne(a_{v_0},b_{v_0},c_{v_0})}{144 d^2}
  \quad\text{from~\eqref{eqn:avgdinlehpf},}\notag \\
  &\quad{}= \Cl[DZ]{dz1}\cdot \frac{e_{w_0}}{n}.
  \label{eqn:avgdestimate1}  \\
  (2)\enspace&
  \sum_{\substack{w\in M_{A/L}^\bad\\w\mid v_0,\, w\ne w_0\\}}
  \frac{1}{n}  \AvgL_{\substack{P,Q\in\Sigma_0\\ P\ne Q\\}} \AvgL_{T\in A[d]} \l^\Bern_{\ThetaDivisor,w}(P-Q+T) \notag\\
  &\quad{}\ge \smash[b]{ \sum_{\substack{w\in M_{A/L}^\bad\\w\mid v_0,\,w\ne w_0\\}} }
  \frac{1}{n}\cdot\frac{1}{24d^2} \left( \frac{\a_w+\g_w+b_w}{D_w}
  - \frac{\abcFunctionOne(a_w,b_w,c_w)}{N-1} \right) \notag \\
  &\omit\hfill applying Lemma~\ref{lemma:fourieravgbound} to $\Sigma_0$ and $w$, \notag \\
  &= \frac{1}{24 n d^2}  \smash[b]{ \sum_{\substack{w\in M_{A/L}^\bad\\w\mid v_0,\,w\ne w_0\\}} }
  \left( \frac{e_w(\a_{v_0}+\g_{v_0}+b_{v_0})}{e_w^2D_{v_0}}
  - \frac{e_w\abcFunctionOne(a_{v_0},b_{v_0},c_{v_0})}{N-1} \right) \notag \\
  &\omit\hfill using the homogeneity formulas~\eqref{eqn:aweqavetc}, \notag \\
  &= \frac{\a_{v_0}+\g_{v_0}+b_{v_0}}{24 n d^2 D_{v_0}}
  \sum_{\substack{w\in M_{A/L}^\bad\\w\mid v_0,\,w\ne w_0\\}} \frac{1}{e_w}
  - \frac{\abcFunctionOne(a_{v_0},b_{v_0},c_{v_0})}{24nd^2(N-1)}
  \sum_{\substack{w\in M_{A/L}^\bad\\w\mid v_0,\,w\ne w_0\\}} e_w \notag \\
  &= \frac{\a_{v_0}+\g_{v_0}+b_{v_0}}{24 n d^2 D_{v_0}}
  \biggl( \sum_{\substack{w\in M_{A/L}^\bad\\w\mid v_0,\,w\ne w_0\\}} \frac{1}{e_w} \biggr)
  - \frac{\abcFunctionOne(a_{v_0},b_{v_0},c_{v_0})(n-e_{w_0})}{24nd^2(N-1)}  \notag \\
  &\omit\hfill since in $\sum_{w\mid v}e_w=n$ for all $v$, \notag \\
  &\ge \frac{\Cl[DZ]{dz2}}{n} \sum_{\substack{w\in M_{A/L}^\bad\\w\mid v_0,\,w\ne w_0\\}} \frac{1}{e_w}
  - \frac{\Cl[DZ]{dz3}}{(N-1)}.
 \label{eqn:avgdestimate2}  \\
 (3)\enspace&
  \sum_{\substack{w\in M_{A/L}^\bad\\ w\nmid v_0\\}}
  \frac{1}{n}  \AvgL_{\substack{P,Q\in\Sigma_0\\ P\ne Q\\}} \AvgL_{T\in A[d]} \l^\Bern_{\ThetaDivisor,w}(P-Q+T) \notag\\
  &\ge\frac{1}{n}  \smash[b]{ \sum_{\substack{w\in M_{A/L}^\bad\\ w\nmid v_0\\}} }
  \frac{1}{24d^2} \left( - \frac{\abcFunctionOne(a_w,b_w,c_w)}{N-1} \right) \notag \\
  &\omit\hfill applying Lemma~\ref{lemma:fourieravgbound} to $\Sigma_0$ and $w$, \notag \\
  &= \frac{1}{n}  \smash[b]{ \sum_{\substack{w\in M_{A/L}^\bad\\ w\nmid v_0\\}} }
  \frac{1}{24d^2} \left( - \frac{e_w \abcFunctionOne(a_v,b_v,c_v)}{N-1} \right) \notag \\
  &\omit\hfill using the homogeneity formulas~\eqref{eqn:aweqavetc}, \notag \\
  &= - \frac{1}{24d^2n}   \sum_{\substack{v\in M_{A/K}^\bad\\ v\ne v_0\\}}
  \left(  \frac{\abcFunctionOne(a_v,b_v,c_v)}{N-1} \right)
  \sum_{\substack{w\in M_L\\ w\mid v\\}} e_w  \notag \\
  &= - \frac{1}{24d^2(N-1)}   \smash[b]{ \sum_{\substack{v\in S(A/K)\\ v\ne v_0\\}} }
  \abcFunctionOne(a_v,b_v,c_v)
  \notag \\
  &\omit\hfill since $\smash{\sum_{w\mid v}e_w=n}$, \notag \\
  &= \smash[t]{ - \frac{\Cl[DZ]{dz4}}{N-1}. }
 \label{eqn:avgdestimate3}  
\end{align}  

Substituting the sum of the three
estimates~\eqref{eqn:avgdestimate1},~\eqref{eqn:avgdestimate2},~\eqref{eqn:avgdestimate3}
into~\eqref{eqn:AvgAvglBern}, we find that
\begin{multline}
  \label{eqn:maxRSAKRCrdz34}
  \smash[b]{  \AvgL_{\substack{P,Q\in\Sigma_0\\ P\ne Q\\}}  \AvgL_{T\in A[d]}  }
  \;\;  \hhat_{\ThetaDivisor,\ThetaDivisor}^\Bern(P-Q+T)  \\
  \ge  \frac{1}{n} \biggl\{ 
  \Cr{dz1} e_{w_0}
  + \Cr{dz2} \sum_{\substack{w\in M_L\\w\mid v_0,\,w\ne w_0\\}} \frac{1}{e_w}
  \biggr\}
  - \frac{\Cr{dz3}+\Cr{dz4}}{N-1}.
\end{multline}
Since
\[
e_{w_0} = \max\{ e_w : w\mid v_0 \}
\quad\text{and}\quad
\sum_{w\mid{v_0}}e_w=n,
\]
we can apply Lemma~\ref{lemma:holderineq}
to the quantity in braces in~\eqref{eqn:maxRSAKRCrdz34} to obtain the
following lower bound, with newly relabeled constants depending
on~$A/K$ and where we have used~\eqref{eqn:NSig6n3S}
and~\eqref{eqn:sigmaprimeprime} to estimate $N=\#\Sigma_0$ in terms
of~$\#\Sigma$.\footnote{We remark that in order to apply
  Lemma~\ref{lemma:holderineq}, the integer~$n$ must satisfy
  $n^2\ge\Cr{dz2}/\Cr{dz1}$. There is no harm in our making this
  assumption, since these constants are given explicitly by
  \[
  \Cr{dz2} = \frac{\a_{v_0}+\g_{v_0}+b_{v_0}}{24d^2D_{v_0}} \quad\text{and}\quad
  \Cr{dz1} = \frac{\abcFunctionOne(a_{v_0},b_{v_0},c_{v_0}) }{144d^2}
  \ge \frac{\a_{v_0}+\g_{v_0}+b_{v_0}}{144d^2D_{v_0}},
  \]
  and thus $\Cr{dz2}/\Cr{dz1}\le6$. Hence it suffices to assume that~$n\ge3$.
}
\begin{align*}
  \AvgL_{\substack{P,Q\in\Sigma_0\\ P\ne Q\\}}  \AvgL_{T\in A[d]} 
  \;\;  \hhat_{\ThetaDivisor,\ThetaDivisor}^\Bern(P-Q+T)  
  &\ge  \frac{1}{n} \cdot \Cl[DZ]{dz5}n^{1/3}  - \frac{\Cl[DZ]{dz6}}{N-1} \\
  &\ge \frac{\Cr{jj3}}{n^{2/3}}   - \frac{\Cr{jj4}}{\#\Sigma}.  
\end{align*}
This completes the proof of Theorem~\ref{theorem:hlen23len23}.
\end{proof}

\begin{proof}[Proof of Corollary~$\ref{corollary:conditionallehmer}$]
Let~$P_0\in{A(\Kbar)}$ be a non-torsion point, and to ease notation, let
\[
L = K(P_0) \quad\text{and}\quad n = [L:K].
\]
We take~$M$ to be the largest integer satisfying
\begin{equation}
  \label{eqn:M2leChP0}
  M^2 \le \frac{\Cr{jj1}}{\hhat_{A,\Theta}(P_0)},
\end{equation}
where~$\Cr{jj1}$ is the constant appearing in~\eqref{eqn:SigmainPALhPleC}.
We consider the set of points
\[
\Sigma = \{ mP_0 : 0 \le m \le M-1 \}
\subset\bigl\{ P\in A(L) : \hhat_{A,\Theta}(P) \le \Cr{jj1} \bigr\},
\]
where the inclusion follows from~$\hhat_{A,D}(mP_0)=m^2\hhat_{A,D}(P_0)$
and our choice of~$M$.
\par
Then, according to~\eqref{eqn:subset0geCsubset},~\eqref{eqn:AvgPQAvgdBerngeLK23},
and~\eqref{eqn:AvgPQAvgdIntgeLK23}, we can find a subset~$\Sigma_0\subseteq\Sigma$
with $\#\Sigma_0\ge\Cr{jj2}\#\Sigma=\Cr{jj2}M$ that satisfies
\begin{align}
  \label{eqn:AvgPQAvgdBerngeLK23x}
  \AvgL_{\substack{P,Q\in\Sigma_0\\ P\ne Q\\}}
  \AvgL_{T\in A[d]}
  \;\;
  \hhat_{A,\ThetaDivisor}^\Bern(P-Q+T)
  &\ge
  \frac{\Cr{jj3}}{n^{2/3}} -  \frac{\Cr{jj4}}{M}. \\
  \label{eqn:AvgPQAvgdIntgeLK23x}
  \AvgL_{\substack{P,Q\in\Sigma_0\\ P\ne Q\\}}
  \AvgL_{T\in A[d]}
  \;\;
  \hhat_{A,\ThetaDivisor}^\Int(P-Q+T)
  &\ge \kappa_{\ThetaDivisor}.
\end{align}
Proposition~\ref{eqn:hhatsumintbernparts} says that
\[
\hhat_{A,\ThetaDivisor} = \hhat_{A,\ThetaDivisor}^\Int + \hhat_{A,\ThetaDivisor}^\Bern-\kappa_\ThetaDivisor,
\]
so adding~\eqref{eqn:AvgPQAvgdBerngeLK23x}
to~\eqref{eqn:AvgPQAvgdIntgeLK23x} yields
\begin{equation}
  \label{eqn:Avgjj3423}
  \AvgL_{\substack{P,Q\in\Sigma_0\\ P\ne Q\\}}
  \AvgL_{T\in A[d]}
  \;\;
  \hhat_{A,\ThetaDivisor}(P-Q+T)
  \ge
  \frac{\Cr{jj3}}{n^{2/3}} -  \frac{\Cr{jj4}}{M}.
\end{equation}
\par
But for any points~$P,Q\in\Sigma$ and for any torsion
point~$T\in{A_\tors}$, we have
\begin{align*}
\hhat_{A,\ThetaDivisor}(P-Q+T)
& = \hhat_{A,\ThetaDivisor}(P-Q) \\
& \le 2\hhat_{A,\ThetaDivisor}(P)+2\hhat_{A,\ThetaDivisor}(Q) \\
& \le 4 \max_{P\in\Sigma} \hhat_{A,\ThetaDivisor}(P) \\
& \le 4 \max_{0\le m < M} \hhat_{A,\ThetaDivisor}(mP_0) \\
& \le M^2 \hhat_{A,\ThetaDivisor}(P_0).
\end{align*}
Hence
\begin{equation}
  \label{eqn:AvgM2hP0}
  \AvgL_{\substack{P,Q\in\Sigma_0\\ P\ne Q\\}}
  \AvgL_{T\in A[d]}
  \;\;
  \hhat_{A,\ThetaDivisor}(P-Q+T)
  \le M^2 \hhat_{A,\ThetaDivisor}(P_0).
\end{equation}
Combining~\eqref{eqn:Avgjj3423} and~\eqref{eqn:AvgM2hP0} yields
\[
M^2 \hhat_{A,\ThetaDivisor}(P_0)
\ge   \frac{\Cr{jj3}}{n^{2/3}} -  \frac{\Cr{jj4}}{M}.
\]
Setting~$M$ to be the smallest integer satisfying
\begin{equation}
  \label{eqn:Mge2Cn23}
  M \ge \frac{2\Cr{jj4}n^{2/3}}{\Cr{jj3}}
\end{equation}
yields (after adjusting constants)
\[
n^{4/3} \hhat_{A,\ThetaDivisor}(P_0) \ge \frac{\Cr{jj5}}{n^{2/3}}.
\]
\par
This completes the proof of
Corollary~\ref{corollary:conditionallehmer} provided that we can
justify choosing~$M$ to satisfy~\eqref{eqn:Mge2Cn23}, since we earlier
in~\eqref{eqn:M2leChP0} assumed that~$M$ satisfies an upper bound.
In other words, we need to check that there is an integer~$M$ in the interval
\[
\frac{2\Cr{jj4}n^{2/3}}{\Cr{jj3}} \le M \le \sqrt{\frac{\Cr{jj1}}{\hhat_{A,\ThetaDivisor}(P_0)}}.
\]
But if there is no such~$M$, then we find that
\[
\sqrt{\frac{\Cr{jj1}}{\hhat_{A,\ThetaDivisor}(P_0)}}
\le \frac{2\Cr{jj4}n^{2/3}}{\Cr{jj3}} + 1,
\]
and squaring both sides and adjusting constants, we see that
\[
\hhat_{A,\ThetaDivisor}(P_0) \ge \frac{\Cl[DZ]{jj6}}{n^{4/3}},
\]
which is an even stronger inequality than the one that we are trying
to prove.
\end{proof}

The following is a more precise and fully explicated version
of~\cite[Lemma~3.1]{hindrysilverman:lehmer}.

\begin{lemma}
\label{lemma:holderineq}
Let $\a,\b,n>0$ be positive real numbers satisfying
\begin{equation}
  \label{eqn:n2gebetaalpha}
  n^2 \ge \b/\a,
\end{equation}
and let $e_0,\ldots,e_r>0$  be positive real numbers satisfying
\[
e_0 = \max\{e_0,\ldots,e_r\}
\quad\text{and}\quad
n = e_0+\cdots+e_r.
\]
Then
\begin{equation}
  \label{eqn:alphae0betaei}
  \a e_0 + \b \sum_{i=1}^r \frac{1}{e_i} \ge (\a^2 \b n)^{\frac{1}{3}}.
\end{equation}
\end{lemma}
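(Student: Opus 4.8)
The plan is to reduce everything to the AM--GM inequality, using two very simple lower bounds for $\sum_{i=1}^r 1/e_i$ and splitting into cases according to the size of $e_0$ relative to $n$.

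First I would dispose of the case $r=0$: then $e_0=n$, the left side of~\eqref{eqn:alphae0betaei} equals $\alpha n$, and cubing together with the hypothesis~\eqref{eqn:n2gebetaalpha} gives $(\alpha n)^3=\alpha^2 n\cdot(\alpha n^2)\ge\alpha^2\beta n$, which is exactly the required inequality. Henceforth $r\ge1$, and the key observations are the two estimates $\sum_{i=1}^r 1/e_i\ge 1/e_0$ (there is at least one index, and $e_1\le e_0$) and $\sum_{i=1}^r 1/e_i=\sum_{i=1}^r e_i/e_i^2\ge\sum_{i=1}^r e_i/e_0^2=(n-e_0)/e_0^2$ (using $e_i\le e_0$ and $\sum_{i\ge1}e_i=n-e_0$).

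If $e_0\le n/2$, I would feed the second estimate into~\eqref{eqn:alphae0betaei}, bound $n-e_0\ge n/2$, and split $\alpha e_0=\tfrac12\alpha e_0+\tfrac12\alpha e_0$; the three positive summands $\tfrac12\alpha e_0$, $\tfrac12\alpha e_0$, $\tfrac{\beta n}{2e_0^2}$ have product $\alpha^2\beta n/8$, so the three-term AM--GM inequality produces the lower bound $\tfrac32(\alpha^2\beta n)^{1/3}$, more than enough.

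If instead $e_0>n/2$, I would use the first estimate, so the left side of~\eqref{eqn:alphae0betaei} is at least $\alpha e_0+\beta/e_0$. Discarding $\beta/e_0$ gives the bound $\alpha n/2$, while two-term AM--GM gives the bound $2\sqrt{\alpha\beta}$. Comparing cubes, $\alpha n/2\ge(\alpha^2\beta n)^{1/3}$ exactly when $n^2\ge8\beta/\alpha$, and $2\sqrt{\alpha\beta}\ge(\alpha^2\beta n)^{1/3}$ exactly when $n^2\le64\beta/\alpha$; these two intervals cover $(0,\infty)$, so one of the two bounds always suffices. I do not expect a genuine obstacle: the only mildly delicate point is that in the range where $n^2$ barely exceeds $\beta/\alpha$ the crude bound $\alpha n/2$ fails, which is precisely why the reserve bound $2\sqrt{\alpha\beta}$ must be kept and why the hypothesis range is split into the two overlapping pieces $n^2\le 64\beta/\alpha$ and $n^2\ge 8\beta/\alpha$.
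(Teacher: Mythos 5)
Your proof is correct, and it takes a genuinely different route from the paper's. You split on whether $e_0\le n/2$ or $e_0>n/2$, use the elementary pointwise bounds $\sum_{i\ge1}1/e_i\ge(n-e_0)/e_0^2$ and $\sum_{i\ge1}1/e_i\ge 1/e_0$, and close each branch with a two- or three-term AM--GM. The hypothesis $n^2\ge\beta/\alpha$ is consumed only in the degenerate case $r=0$, while the overlapping subcases $n^2\ge 8\beta/\alpha$ and $n^2\le 64\beta/\alpha$ cover the rest of the $e_0>n/2$ branch with room to spare. I checked each estimate: the $r=0$ cube comparison reduces exactly to $n^2\ge\beta/\alpha$; in the $e_0\le n/2$ branch the three-term AM--GM gives $\tfrac32(\alpha^2\beta n)^{1/3}$; in the $e_0>n/2$ branch the two auxiliary bounds $\alpha n/2$ and $2\sqrt{\alpha\beta}$ dominate $(\alpha^2\beta n)^{1/3}$ precisely on the ranges you state, and those ranges overlap.

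The paper instead works with the number $r$ of small indices as the free parameter. It bounds $e_0\ge n/(r+1)$, applies Cauchy--Schwarz to get $\sum_{i\ge1}1/e_i\ge r(r+1)/n$, and so lower-bounds the left side by $\alpha n/(r+1)+\beta(r^2+r)/n$. It then relaxes $r$ to a real variable, substitutes $u=\gamma(t+1)$ with $\gamma=(\beta/(\alpha n^2))^{1/3}$, and reduces to the fact that $F(w)=\inf_{u>0}\{u^{-1}+u^2-wu\}$ is decreasing with $F(1)=1$; the hypothesis $n^2\ge\beta/\alpha$ enters only as $\gamma\le1$ at the last step. That argument is a single optimization that \emph{explains} the exponent $1/3$ as the balance point of the competing terms and handles all $r\ge0$ uniformly. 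Your version is more elementary --- no Cauchy--Schwarz, no calculus, no envelope function --- and in the $e_0\le n/2$ case actually yields the stronger constant $3/2$. The trade-off is a three-way case split and somewhat less structural insight into where the exponent comes from. Both proofs are complete.
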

\begin{proof}
Since~$e_0$ is the largest of the~$e_i$ and~$n$ is
the sum of the~$e_i$, we can estimate
\begin{equation}
  \label{eqn:e0genr1}
  e_0 \ge \frac{e_0+\cdots+e_r}{r+1} = \frac{n}{r+1}.
\end{equation}
We compute
\begin{align}
\label{eqn:r2ge4nr1ei1}
r^2&= \left( \sum_{i=1}^r e_i^{1/2}\cdot e_i^{-1/2} \right)^2 \notag \\
&\le \left( \sum_{i=1}^r e_i \right)\left( \sum_{i=1}^r e_i^{-1} \right)
&&\text{Cauchy-Schwartz inequality,} \notag\\
&= (n-e_0)\left( \sum_{i=1}^r e_i^{-1} \right)
&&\text{since $e_0+\cdots+e_r=n$,} \notag\\
&\le \frac{rn}{r+1} \left( \sum_{i=1}^r e_i^{-1} \right)
&&\text{using \eqref{eqn:e0genr1}.}
\end{align}
We use this estimate to bound the left-hand side
of~\eqref{eqn:alphae0betaei} as
\begin{align}
\label{eqn:ae0bsumi1rinf}
\a e_0 + \b \sum_{i=1}^r \frac{1}{e_i}
&\ge \a \frac{n}{r+1} + \b \frac{r^2+r}{n}
\quad\text{using \eqref{eqn:e0genr1}  and \eqref{eqn:r2ge4nr1ei1},} \notag\\
&\ge \inf_{t>0} \left\{ \frac{\a n}{t+1} + \frac{\b}{n}(t^2+t) \right\} \notag\\
&= \inf_{x>1} \left\{ \frac{\a n}{x} + \frac{\b}{n}(x^2-x) \right\}
\quad\text{setting $x=t+1$,} \notag\\
&= (\a^2\b n)^{1/3} \inf_{u>\g} \left\{ \frac{1}{u}+ u^2 - \g u \right\} \\
&\omit\hfill setting $\g=\left(\dfrac{\b}{\a n^2}\right)^{1/3}$ \hspace*{-10pt} and $u=\g x$. \notag
\end{align}

To ease notation, we let
\[
f(\g,u) = u^{-1} + u^2 - \g u.
\]
The fact that
\[
\frac{d^2\phantom u}{du^2}(u^{-1}+u^2-\g u) = 2u^{-3} + 2 > 0 \quad\text{for all $u>0$}
\]
shows that~$f(\g,u)$ has at most one minimum on the half-line~$u>0$,
and then the fact that~$f(\g,u)\to\infty$ as~$u\to0^+$ and
as~$u\to\infty$ shows that it has a unique minimum. We thus get a
well-defined function
\[
F(w) = \inf_{u>0} f(w,u) = \inf_{u>0} \{ u^{-1} + u^2 - wu \} \quad\text{for $w\in\RR$.}
\]
\par
We claim that~$F(w)$ is a strictly decreasing function. To see why, we note that
our earlier discussion shows that
\[
F(w) = f\bigl(w,U(w)\bigr) = U(w)^{-1}+U(w)^2-wU(w),
\]
where~$u=U(w)$ is the unique real solution to the equation
\[
\frac{\partial f}{\partial u}(w,u) = -u^{-2} + 2u - w = 0.
\]
Hence
\begin{align*}
\frac{dF}{dw}
&= \frac{d\phantom w}{dw}f\bigl(w,U(w)\bigr) \\
& = \frac{\partial f}{\partial w}\bigl(w,U(w)\bigr)
+ \underbrace{ \frac{\partial f}{\partial u}\bigl(w,U(w)\bigr) }_{\text{this is 0}}\cdot \frac{dU}{dw}(w) \\
&= -U(w) < 0.
\end{align*}
\par
Returning to our earlier calculation and using the
assumption~\eqref{eqn:n2gebetaalpha} that~$\g\le1$, we find that
\begin{align*}
  \a e_0 + \b \sum_{i=1}^r \frac{1}{e_i}
  &\ge (\a^2\b n)^{1/3} \inf_{u>\g} \left\{ u^{-1} + u^2 - \g u \right\}
  \quad\text{from~\eqref{eqn:ae0bsumi1rinf},} \\
  &\ge (\a^2\b n)^{1/3} F(\g)
  \quad\text{by defintiion of $F(w)$,} \\
  &\ge (\a^2\b n)^{1/3} F(1)
  \quad\begin{tabular}[t]{l} for all $0\le\g\le1$, since $F(w)$\\ is a decreasing function,\\ \end{tabular} \\
  &= (\a^2\b n)^{1/3}
  \quad\text{since it is easy to compute $F(1)=1$.}
\end{align*}
This completes the proof of Lemma~\ref{lemma:holderineq}.
\end{proof}


\begin{acknowledgement}
The authors would like to thank Dan Abramovich, Matt Baker, and David
Grant for their helpful advice.
\end{acknowledgement}



\bibliographystyle{plain}
\bibliography{ArithDyn,Intersection}

\def\cprime{$'$} \def\cprime{$'$}
\begin{thebibliography}{10}

\bibitem{MR1740514}
Francesco Amoroso and Roberto Dvornicich.
\newblock A lower bound for the height in abelian extensions.
\newblock {\em J. Number Theory}, 80(2):260--272, 2000.

\bibitem{MR591611}
M.~Anderson and David~W. Masser.
\newblock Lower bounds for heights on elliptic curves.
\newblock {\em Math. Z.}, 174(1):23--34, 1980.

\bibitem{MR1979685}
Matthew~H. Baker.
\newblock Lower bounds for the canonical height on elliptic curves over abelian
  extensions.
\newblock {\em Int. Math. Res. Not.}, (29):1571--1589, 2003.

\bibitem{MR2067482}
Matthew~H. Baker and Joseph~H. Silverman.
\newblock A lower bound for the canonical height on abelian varieties over
  abelian extensions.
\newblock {\em Math. Res. Lett.}, 11(2-3):377--396, 2004.

\bibitem{MR0296021}
P.~E. Blanksby and H.~L. Montgomery.
\newblock Algebraic integers near the unit circle.
\newblock {\em Acta Arith.}, 18:355--369, 1971.

\bibitem{MR1413570}
John~L. Boxall.
\newblock Une propri{\'e}t{\'e} des hauteurs locales de {N}{\'e}ron-tate sur
  les vari{\'e}t{\'e} ab{\'e}liennes.
\newblock {\em J. Th\'eor. Nombres Bordeaux}, 7(1):111--119, 1995.

\bibitem{MR1254751}
Sinnou David.
\newblock Minorations de hauteurs sur les vari\'{e}t\'{e}s ab\'{e}liennes.
\newblock {\em Bull. Soc. Math. France}, 121(4):509--544, 1993.

\bibitem{MR1799933}
Sinnou David and Marc Hindry.
\newblock Minoration de la hauteur de {N}\'{e}ron-{T}ate sur les
  vari\'{e}t\'{e}s ab\'{e}liennes de type {C}. {M}.
\newblock {\em J. Reine Angew. Math.}, 529:1--74, 2000.

\bibitem{MR1478502}
Sinnou David and Patrice Philippon.
\newblock Minorations des hauteurs normalis\'{e}es des sous-vari\'{e}t\'{e}s de
  vari\'{e}t\'{e}s ab\'{e}liennes.
\newblock In {\em Number theory ({T}iruchirapalli, 1996)}, volume 210 of {\em
  Contemp. Math.}, pages 333--364. Amer. Math. Soc., Providence, RI, 1998.

\bibitem{MR1949109}
Sinnou David and Patrice Philippon.
\newblock Minorations des hauteurs normalis\'{e}es des sous-vari\'{e}t\'{e}s de
  vari\'{e}t\'{e}s abeliennes. {II}.
\newblock {\em Comment. Math. Helv.}, 77(4):639--700, 2002.

\bibitem{MR2355454}
Sinnou David and Patrice Philippon.
\newblock Minorations des hauteurs normalis\'{e}es des sous-vari\'{e}t\'{e}s
  des puissances des courbes elliptiques.
\newblock {\em Int. Math. Res. Pap. IMRP}, (3):Art. ID rpm006, 113, 2007.

\bibitem{dobrowolski:lehmer}
E.~Dobrowolski.
\newblock On a question of {L}ehmer and the number of irreducible factors of a
  polynomial.
\newblock {\em Acta Arith.}, 34:391--401, 1979.

\bibitem{MR3598828}
Aur\'{e}lien Galateau and Val\'{e}ry Mah\'{e}.
\newblock Some consequences of {M}asser's counting theorem on elliptic curves.
\newblock {\em Math. Z.}, 285(1-2):613--629, 2017.

\bibitem{hindrynotesonlocalheights}
Marc Hindry.
\newblock Sur les hauteurs locales de {N}{\'e}ron sur les vari{\'e}t{\'e}s
  ab{\'e}liennes.
\newblock Pr{\'e}publications Math{\'e}matiques de l'U.R.A. 212 'Th{\'e}ories
  G{\'e}om{\'e}triques' no. 51, Universit{\'e} Paris 7 1993.

\bibitem{hindrysilverman:integralpts}
Marc Hindry and Joseph~H. Silverman.
\newblock The canonical height and integral points on elliptic curves.
\newblock {\em Invent. Math.}, 93(2):419--450, 1988.

\bibitem{hindrysilverman:lehmer}
Marc Hindry and Joseph~H. Silverman.
\newblock On {L}ehmer's conjecture for elliptic curves.
\newblock In {\em S\'eminaire de Th\'eorie des Nombres, Paris 1988--1989},
  volume~91 of {\em Progr. Math.}, pages 103--116. Birkh\"auser Boston, Boston,
  MA, 1990.

\bibitem{laurent:lehmer}
M.~Laurent.
\newblock Minoration de la hauteur de {N}\'eron-{T}ate.
\newblock In {\em S\'eminaire de Th\'eorie des Nombres}, Progress in
  Mathematics, pages 137--151. Birkh\"auser, 1983.
\newblock Paris 1981--1982.

\bibitem{MR1503118}
D.~H. Lehmer.
\newblock Factorization of certain cyclotomic functions.
\newblock {\em Ann. of Math. (2)}, 34(3):461--479, 1933.

\bibitem{MR766295}
D.~W. Masser.
\newblock Small values of the quadratic part of the {N}\'{e}ron-{T}ate height
  on an abelian variety.
\newblock {\em Compositio Math.}, 53(2):153--170, 1984.

\bibitem{masser:lehmer}
D.~W. Masser.
\newblock Counting points of small height on elliptic curves.
\newblock {\em Bull. Soc. Math. France}, 117(2):247--265, 1989.

\bibitem{MR3081000}
Fabien Pazuki.
\newblock Minoration de la hauteur de {N}\'{e}ron-{T}ate sur les surfaces
  ab\'{e}liennes.
\newblock {\em Manuscripta Math.}, 142(1-2):61--99, 2013.

\bibitem{MR2445828}
Nicolas Ratazzi.
\newblock Intersection de courbes et de sous-groupes et probl\`emes de
  minoration de hauteur dans les vari\'{e}t\'{e}s ab\'{e}liennes {C}.{M}.
\newblock {\em Ann. Inst. Fourier (Grenoble)}, 58(5):1575--1633, 2008.

\bibitem{MR747871}
Joseph~H. Silverman.
\newblock Lower bounds for height functions.
\newblock {\em Duke Math. J.}, 51(2):395--403, 1984.

\bibitem{MR2029512}
Joseph~H. Silverman.
\newblock A lower bound for the canonical height on elliptic curves over
  abelian extensions.
\newblock {\em J. Number Theory}, 104(2):353--372, 2004.

\bibitem{smyth:lehmer}
C.~J. Smyth.
\newblock On the product of the conjugates outside the unit circle of an
  algebraic integer.
\newblock {\em Bull. London Math. Soc.}, 3:169--175, 1971.

\bibitem{MR507748}
Cameron~L. Stewart.
\newblock Algebraic integers whose conjugates lie near the unit circle.
\newblock {\em Bull. Soc. Math. France}, 106(2):169--176, 1978.

\bibitem{MR1609514}
Emmanuel Ullmo.
\newblock Positivit\'e et discr\'etion des points alg\'ebriques des courbes.
\newblock {\em Ann. of Math. (2)}, 147(1):167--179, 1998.

\bibitem{VergerGaugrysurvey}
Jean-Louis Verger-Gaugry.
\newblock A survey on the conjecture of {L}ehmer and the conjecture of
  {S}chinzel--{Z}assenhaus, 2019.
\newblock \url{hal.archives-ouvertes.fr/hal-02315014}.

\bibitem{MR1418354}
Annette Werner.
\newblock Local heights on {M}umford curves.
\newblock {\em Math. Ann.}, 306(4):819--831, 1996.

\bibitem{MR1458753}
Annette Werner.
\newblock Local heights on abelian varieties with split multiplicative
  reduction.
\newblock {\em Compositio Math.}, 107(3):289--317, 1997.

\bibitem{MR1662481}
Annette Werner.
\newblock Local heights on abelian varieties and rigid analytic uniformization.
\newblock {\em Doc. Math.}, 3:301--319, 1998.

\bibitem{MR1609518}
Shou-Wu Zhang.
\newblock Equidistribution of small points on abelian varieties.
\newblock {\em Ann. of Math. (2)}, 147(1):159--165, 1998.

\bibitem{zhang1989unpublished}
Shouwu Zhang.
\newblock Lower bounds for heights on elliptic curves, June 1989.
\newblock \url{unpublished}.

\end{thebibliography}

\appendix

\section{Verification of some basic formulas}
\label{section:verifyformulas}

\begin{proposition}
\label{proposition:qprop1}
Let~$\bfn\in\ZZ^g$ and~$\bfu\in\GG_m^g(K_v)$.
\begin{parts}
\vspace{2\jot}
\Part{(a)}
$\displaystyle
\Theta\bigl(\bfu\cdot (\bfq\star2\bfn),\bfq\bigr)
=
({}^t\bfn\star\bfq\star\bfn)^{-1} ({}^t\bfn\star\bfu)^{-1} \Theta(\bfu,\bfq).
$
\vspace{2\jot}
\Part{(b)}
$\displaystyle
v\Bigl(\Theta\bigl(\bfu\cdot (\bfq\star2\bfn),\bfq\bigr)\Bigr)
=
v\Bigl(\Theta(\bfu,\bfq)\Bigr)
- {}^t\bfn{Q}\bfn-{}^t\bfn{v(\bfu)}.
$
\end{parts}
\end{proposition}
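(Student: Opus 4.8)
The plan is a direct computation: expand the defining series for the theta function, complete the square in the ``exponent'' of~$\bfq$, and re-index the sum. First I would write
\[
\ThetaFunction\bigl(\bfu\cdot(\bfq\star2\bfn),\bfq\bigr)
= \sum_{\bfm\in\ZZ^g} ({}^t\bfm\star\bfq\star\bfm)\prod_{k=1}^g\Bigl(u_k\prod_{j=1}^g q_{kj}^{2n_j}\Bigr)^{m_k},
\]
and note that the $k$-product factors as $({}^t\bfm\star\bfu)\cdot\prod_{k,j}q_{kj}^{2m_kn_j}=({}^t\bfm\star\bfu)({}^t\bfm\star\bfq\star\bfn)^2$, using that~$\bfq$ is symmetric. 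Hence the series equals $\sum_{\bfm}({}^t\bfm\star\bfq\star\bfm)({}^t\bfm\star\bfq\star\bfn)^2({}^t\bfm\star\bfu)$.

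The key algebraic fact is the ``completing the square'' identity
\[
{}^t(\bfm+\bfn)\star\bfq\star(\bfm+\bfn)
= ({}^t\bfm\star\bfq\star\bfm)({}^t\bfm\star\bfq\star\bfn)^2({}^t\bfn\star\bfq\star\bfn),
\]
which follows from the multiplicative bilinearity of $(\bfx,\bfy)\mapsto{}^t\bfx\star\bfq\star\bfy$ together with the symmetry $q_{ij}=q_{ji}$, and likewise ${}^t\bfm\star\bfu=({}^t\bfn\star\bfu)^{-1}\,{}^t(\bfm+\bfn)\star\bfu$. Substituting these to clear $({}^t\bfm\star\bfq\star\bfm)({}^t\bfm\star\bfq\star\bfn)^2$ and the~$\bfu$-factor, the series becomes
\[
({}^t\bfn\star\bfq\star\bfn)^{-1}({}^t\bfn\star\bfu)^{-1}\sum_{\bfm\in\ZZ^g}{}^t(\bfm+\bfn)\star\bfq\star(\bfm+\bfn)\cdot{}^t(\bfm+\bfn)\star\bfu.
\]
Since $\bfn\in\ZZ^g$, the change of variable $\bfm'=\bfm+\bfn$ is a bijection of~$\ZZ^g$, and the remaining sum is precisely $\ThetaFunction(\bfu,\bfq)$; this proves~(a). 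The re-indexing is legitimate because $Q=v(\bfq)$ is positive definite, so in the complete non-archimedean field $\Kbar_v$ the general term tends to~$0$ and the series is unconditionally convergent.

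Part~(b) then follows by applying~$v$ to the identity in~(a) and using $v(xy)=v(x)+v(y)$, $v(x^{-1})=-v(x)$, the already-recorded formula $v({}^t\bfn\star\bfq\star\bfn)={}^t\bfn Q\bfn$, and $v({}^t\bfn\star\bfu)=\sum_i n_iv(u_i)={}^t\bfn\,v(\bfu)$. I do not anticipate a genuine obstacle here; the computation is routine, and the only point meriting a word of justification is the convergence/re-indexing step noted above, which is standard in the non-archimedean setting.
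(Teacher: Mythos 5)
Your proof is correct and follows essentially the same route as the paper: both expand the theta series, complete the square via ${}^t(\bfm+\bfn)\star\bfq\star(\bfm+\bfn)=({}^t\bfm\star\bfq\star\bfm)({}^t\bfm\star\bfq\star 2\bfn)({}^t\bfn\star\bfq\star\bfn)$, re-index $\bfm\mapsto\bfm+\bfn$, and deduce (b) by applying $v$ using the elementary valuation identities $v({}^t\bfn\star\bfq\star\bfn)={}^t\bfn Q\bfn$ and $v({}^t\bfn\star\bfu)={}^t\bfn\,v(\bfu)$. The only cosmetic difference is that you write the middle factor as $({}^t\bfm\star\bfq\star\bfn)^2$ where the paper keeps it as $({}^t\bfm\star\bfq\star 2\bfn)$, and you add a (correct, though unstated in the paper) remark that positive-definiteness of $Q$ guarantees unconditional convergence and hence legitimizes the re-indexing.
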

\begin{proof}
(a)\enspace
We compute
\begin{align*}
  \Theta\bigl(\bfu\cdot (\bfq\star2\bfn),\bfq\bigr)
  &= \sum_{\bfm\in\ZZ^g} ({}^t\bfm\star\bfq\star\bfm)
  \Bigl({}^t\bfm\star\bigl(\bfu\cdot (\bfq\star2\bfn)\bigr)\Bigr) \\
  &= \sum_{\bfm\in\ZZ^g} ({}^t\bfm\star\bfq\star\bfm)
  ({}^t\bfm\star\bfu) 
  \bigl({}^t\bfm\star\bfq\star2\bfn) \\
  &= \sum_{\bfm\in\ZZ^g} \Bigl({}^t\bfm\star\bfq\star(\bfm+2\bfn)\Bigr)
  ({}^t\bfm\star\bfu) \\
  &= \smash[b]{ \sum_{\bfm\in\ZZ^g} } \Bigl({}^t(\bfm+\bfn)\star\bfq\star(\bfm+\bfn)\Bigr)
  ({}^t\bfn\star\bfq\star\bfn)^{-1} \\
  &\omit\hfill$\displaystyle \Bigl({}^t(\bfm+\bfn)\star\bfu\Bigr) ({}^t\bfn\star\bfu)^{-1}$ \\
  &= ({}^t\bfn\star\bfq\star\bfn)^{-1} ({}^t\bfn\star\bfu)^{-1} \Theta(\bfu,\bfq).
\end{align*}
\par\noindent(b)\enspace
We have elementary formulas
\begin{equation}
  \label{eqn:threevformulas}
  v(\bfq\star\bfn)=Q\bfn,\quad
  v({}^t\bfn\star\bfq\star\bfn)={}^t\bfn{Q}\bfn,\quad
  v({}^t\bfn\star\bfu)={}^t\bfn{v(\bfu)}.
\end{equation}
We verify the first of these and leave the others to the reader.
\[
v(\bfq\star\bfn) =
\begin{pmatrix}
  v(q_{11}^{n_1}\cdots q_{1g}^{n_g}) \\
  \vdots \\
  v(q_{g1}^{n_1}\cdots q_{gg}^{n_g}) \\
\end{pmatrix}
=
\sum_{j=1}^g 
  n_j
  \begin{pmatrix}
  v(q_{1j}) \\ \vdots \\ v(q_{gj}) \\
  \end{pmatrix}
  =
  Q\bfn.
\]
Then applying~$v$ to the formula in~(a) gives the stated result.
\end{proof}

\begin{proposition}
\label{proposition:qprop2}
The function
\begin{gather*}
\Lambda(\,\cdot\,,\bfq) : \GG_m^g(K_v) \longrightarrow \RR,\\
\Lambda(\bfu,\bfq) = v\bigl(\Theta(\bfu,\bfq)\bigr) + {\dfrac{1}{4}} {}^tv(\bfu) Q^{-1} v(\bfu),
\end{gather*}
is~$\Omega$-invariant, and hence descends to a function
\[
\Lambda(\,\cdot\,,\bfq) : A(K_v)\cong \GG_m^g(K_v)/\Omega \longrightarrow \RR.
\]
\end{proposition}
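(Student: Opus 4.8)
The plan is to check directly that $\Lambda(\,\cdot\,,\bfq)$ is invariant under translation by every element of the period lattice~$\Omega$; this automatically forces it to descend to $\GG_m^g(K_v)/\Omega\cong A(K_v)$. Recall that every element of~$\Omega$ is a product of powers of the columns of the matrix with entries~$q_{ij}^2$, and hence has the form $\bfq\star2\bfn$ for some $\bfn\in\ZZ^g$. So it suffices to prove
\[
\Lambda\bigl(\bfu\cdot(\bfq\star2\bfn),\bfq\bigr)=\Lambda(\bfu,\bfq)
\qquad\text{for all $\bfn\in\ZZ^g$ and $\bfu\in\GG_m^g(K_v)$.}
\]

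First I would handle the theta summand of $\Lambda(\bfu,\bfq)=v\bigl(\Theta(\bfu,\bfq)\bigr)+\tfrac14\,{}^tv(\bfu)\,Q^{-1}v(\bfu)$: Proposition~\ref{proposition:qprop1}(b) gives immediately
\[
v\Bigl(\Theta\bigl(\bfu\cdot(\bfq\star2\bfn),\bfq\bigr)\Bigr)
=v\bigl(\Theta(\bfu,\bfq)\bigr)-{}^t\bfn Q\bfn-{}^t\bfn v(\bfu).
\]
Next I would treat the quadratic summand. Using the elementary identity $v(\bfq\star\bfn)=Q\bfn$ from~\eqref{eqn:threevformulas}, applied with $\bfn$ replaced by $2\bfn$, one gets $v\bigl(\bfu\cdot(\bfq\star2\bfn)\bigr)=v(\bfu)+2Q\bfn$. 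Expanding ${}^t\bigl(v(\bfu)+2Q\bfn\bigr)Q^{-1}\bigl(v(\bfu)+2Q\bfn\bigr)$ and using ${}^tQ=Q$ and $Q^{-1}Q=I$, the two cross terms are equal, each being ${}^tv(\bfu)\,Q^{-1}(2Q\bfn)=2\,{}^t\bfn v(\bfu)$, while the pure term is ${}^t(2Q\bfn)Q^{-1}(2Q\bfn)=4\,{}^t\bfn Q\bfn$, so
\[
\tfrac14\,{}^t\!\bigl(v(\bfu)+2Q\bfn\bigr)Q^{-1}\bigl(v(\bfu)+2Q\bfn\bigr)
=\tfrac14\,{}^tv(\bfu)\,Q^{-1}v(\bfu)+{}^t\bfn v(\bfu)+{}^t\bfn Q\bfn.
\]

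Adding the two displays, the corrections $-{}^t\bfn Q\bfn$ and $-{}^t\bfn v(\bfu)$ coming from the theta transformation are cancelled exactly by the $+{}^t\bfn Q\bfn$ and $+{}^t\bfn v(\bfu)$ produced by the quadratic term, so $\Lambda\bigl(\bfu\cdot(\bfq\star2\bfn),\bfq\bigr)=\Lambda(\bfu,\bfq)$, which is the claimed $\Omega$-invariance; the induced function $\Lambda(\,\cdot\,,\bfq):A(K_v)\to\RR$ in the statement is then well defined. I expect no genuine obstacle here---it is a pure bookkeeping computation---so the only things to keep straight are the factor of~$2$ in $\bfq\star2\bfn$ (which is precisely what gives the cross term the size needed to absorb the linear correction term in Proposition~\ref{proposition:qprop1}(b)) and the use of the symmetry of~$Q$ when splitting the bilinear form.
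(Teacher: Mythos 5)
Your proof is correct and follows essentially the same route as the paper: apply Proposition~\ref{proposition:qprop1}(b) to the theta summand, use $v(\bfq\star2\bfn)=2Q\bfn$ to expand the quadratic summand, and observe that the linear and quadratic correction terms cancel exactly, using the symmetry ${}^tQ=Q$. No discrepancy.
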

\begin{proof}
We use the elementary formulas~\eqref{eqn:threevformulas} to compute
what happens when we translate~$\bfu$ by an element of the lattice.
\begin{align*}
  \Lambda & \bigl(\bfu\cdot(\bfq\star2\bfn),\bfq\bigr)  - \Lambda(\bfu,\bfq) \\* 
  &=  \left\{ v\bigl(\Theta(\bfu\cdot(\bfq\star2\bfn),\bfq)\bigr) + {\dfrac{1}{4}} {}^tv\bigl(\bfu\cdot(\bfq\star2\bfn)\bigr) Q^{-1} v\bigl(\bfu\cdot(\bfq\star2\bfn)\bigr) \right\} \\*
  & \omit\hfill$\displaystyle{} -  \left\{ v\bigl(\Theta(\bfu,\bfq)\bigr) + {\dfrac{1}{4}} {}^tv(\bfu) Q^{-1} v(\bfu) \right\}$ \\
  &= \Bigl\{ v\bigl(\Theta(\bfu\cdot(\bfq\star2\bfn),\bfq)\bigr) - v\bigl(\Theta(\bfu,\bfq)\bigr) \Bigr\} \\*
  & \omit\hfill$\displaystyle{}+
  \left\{ {\dfrac{1}{4}} {}^tv\bigl(\bfu\cdot(\bfq\star2\bfn)\bigr) Q^{-1} v\bigl(\bfu\cdot(\bfq\star2\bfn)\bigr) 
  - {\dfrac{1}{4}} {}^tv(\bfu) Q^{-1} v(\bfu) \right\}$ \\
  &= v\Bigl(({}^t\bfn\star\bfq\star\bfn)^{-1} ({}^t\bfn\star\bfu)^{-1}  \Bigr) \\*
  & \omit\hfill$\displaystyle{}
  + {\dfrac{1}{4}} \Bigl\{
  {}^tv(\bfq\star2\bfn)Q^{-1}v(\bfu) 
  +
  {}^tv(\bfu)Q^{-1}v(\bfq\star2\bfn)
  $ \\
  & \omit\hfill$\displaystyle{}
  +
  {}^tv(\bfq\star2\bfn)Q^{-1}v(\bfq\star2\bfn)
  \Bigr\}
  $ \\
  &= \Bigl\{ -{}^t\bfn{Q}\bfn-{}^t\bfn v(\bfu) \Bigr\} \\*
  & \omit\hfill$\displaystyle{}
  +
  {\dfrac{1}{4}} \Bigl\{
      {}^t(2Q\bfn) Q^{-1} v(\bfu)
      + {}^tv(\bfu)Q^{-1} 2Q\bfn
      + {}^t(2Q\bfn)Q^{-1}(2Q\bfn)
      \Bigr\}
  $\\
  & = 0 \quad\text{since ${}^tQ=Q$.}     
\end{align*}
\end{proof}

\end{document}